\documentclass[12pt]{amsart}
\title[Quasisymmetries of Sierpi\'nski carpet Julia sets]{Quasisymmetries of 
Sierpi\'nski carpet\\ Julia sets}
\author{Mario Bonk,  Misha Lyubich, and Sergei Merenkov}

\address{Mario Bonk\\ Department of Mathematics\\
University of California, Los Angeles\\ Los Angeles,
CA 90095\\USA} \email{mbonk@math.ucla.edu}
\thanks{M.B.\ was supported by NSF grants DMS-0456940, DMS-0652915, DMS-1058283, DMS-1058772, and DMS-1162471}

\address{Mikhail Lyubich\\Institute for Mathematical Sciences\\
Stony Brook University\\
Stony Brook, NY 11794\\USA }
\email{mlyubich@math.sunysb.edu}
\thanks{M.L.\ was supported by NSF grant 
DMS-1007266.}

\address{Sergei Merenkov\\Department of Mathematics\\University of Illinois at Urbana-Champaign\\Urbana, IL 61801\\USA, \textnormal{and}
Department of Mathematics\\The City College of New York,  New York, NY 10031, USA.
}

\email{merenkov@illinois.edu, smerenkov@ccny.cuny.edu}
\thanks{S.M.\ was supported by NSF grant DMS-1001144.}


\date{March 3, 2014}

\usepackage[matrix,arrow,curve,cmtip]{xy}
\usepackage{amsmath, amssymb, amsthm,latexsym}
\usepackage{graphicx}

\newcommand\C{{\mathbb C}}
\newcommand\hC{\widehat {\mathbb C}}
\newcommand\N{{\mathbb N}}

\newcommand\D{{\mathbb D}}

\newcommand\Z{{\mathbb Z}}
\newcommand\R{{\mathbb R}}

\newcommand\Sph{{\mathbb S}}

\newcommand\Ju{\mathcal J}
\newcommand\Fa{\mathcal F}
\newcommand\dee{\partial}

\newcommand\dist{\operatorname{dist}}
\newcommand\diam{\operatorname{diam}}

\newcommand\crit{\operatorname{crit}}
\newcommand\bran{\operatorname{crit}}
\newcommand\id{\operatorname{id}}
\newcommand\inte{\operatorname{int}}

\renewcommand\:{\colon}
\newcommand\sub {\subseteq}
\newcommand\ra {\rightarrow}

\newcommand\Om{\Omega}
\newcommand\ga{\gamma}

\newcommand\eps{\epsilon}

\newcommand\no{\noindent} 
\newcommand\post{\operatorname{post}} 

\numberwithin{equation}{section}

\newtheorem{theorem}{Theorem}[section]

\newtheorem{corollary}[theorem]{Corollary}

\newtheorem{lemma}[theorem]{Lemma}

\theoremstyle{definition}

\def\note#1{\marginpar{1}}


\begin{document}

\abstract{We prove that if $\xi$ is a quasisymmetric homeomorphism between Sierpi\'nski carpets that are the Julia sets of post\-criti\-cally-finite rational maps, then $\xi$ is the restriction  of a M\"obius transformation to the  Julia set. This implies that the group of quasisymmetric homeomorphisms of a Sierpi\'nski carpet Julia set of a postcritically-finite rational map is finite. 
}\endabstract

\maketitle

\section{Introduction}\label{s:Intro}
\no 
A {\em Sierpi\'nski carpet} is a topological space homeomorphic to the well-known standard Sierpi\'nski carpet fractal. A subset $S$ of  the 
Riemann sphere $\hC$ is a Sierpi\'nski carpet if and only if it has empty interior and  can be written as 
\begin{equation}\label{eq:Scarp}
 S=\hC\setminus \bigcup_{k\in \N} D_k,
 \end{equation} where 
the sets $D_k\sub \hC$, $k\in \N$, are pairwise disjoint  Jordan regions with $\partial D_k\cap \partial D_l=\emptyset$ for $k\ne l$, and $\diam(D_k)\ra 0$ as $k\to \infty$. A Jordan curve $C$ in a Sierpi\'nski carpet $S$ is called a {\em peripheral circle} if its removal does not separate $S$. If $S$ is a Sierpi\'nski carpet as in \eqref{eq:Scarp}, then the peripheral circles of $S$ are precisely the boundaries $\partial D_k$ of the Jordan regions $D_k$. 

 Sierpi\'nski carpets can arise as Julia sets 
of rational maps.  For example,  in~\cite{BDLSS} (see also~\cite{DL}) it is shown that for the family 
$$
f_\lambda(z)=z^2+\lambda/z^2,
$$ 
in each neighborhood of $0$ in the parameter plane, 
there are infinitely many parameters $\lambda_n$  such that
 the Julia sets  $\Ju(f_{\lambda_n})$  are  Sierpi\'nski carpets on which 
 the corresponding maps $f_{\lambda_n} $ are not topologically conjugate.  

\subsection{Statement of main results}\label{SS:MR}
Sierpi\'nski carpets  have large groups of self-homeomorphisms.  For example, it follows from results in \cite{gW58} that if  $n\in\N$ and we are   given  two $n$-tuples of distinct peripheral circles of a Sierpi\'nski carpet $S$, then there exists a homeomorphism on $S$ that takes the peripheral circles of the first $n$-tuple to the corresponding peripheral circles of the other.

In contrast, strong rigidity statements are valid  if we consider quasisymmetries of Sierpi\'nski carpets.  By a \emph{quasisymmetry} of a compact set $K$ we mean a quasisymmetric homeomorphism of $K$ onto itself (see Section~\ref{ss:qc}).  

In the present paper,   we prove  quasisymmetric rigidity results for 
Sierpi\'nski carpets that are Julia sets
of  postcritically-finite rational maps (see Section~\ref{ss:compldyn} for the definitions). 

\begin{theorem} \label{thm:main1} Let $f \: \hC\ra \hC$ be a postcritically-finite rational  map, and suppose the Julia set $\Ju(f)$ of $f$ is a Sierpi\'nski carpet. If  $\xi$ is a quasisymmetry of $\Ju(f)$, then $\xi$ is the restriction to $\Ju(f)$ of a M\"obius transformation on $\hC$.  
\end{theorem}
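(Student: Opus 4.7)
My plan is to combine a structural principle for quasisymmetries of carpets with the dynamical self-similarity of $\Ju(f)$ supplied by postcritical finiteness. First I would show that $\xi$ permutes the peripheral circles of $\Ju(f)$. These peripheral circles are precisely the boundaries $\partial D_k$ of the Fatou components $D_k$ of $f$, and postcritical finiteness forces them to be uniform quasicircles that are uniformly relatively separated; in addition $\Ju(f)$ has Lebesgue measure zero. Under such geometric hypotheses, standard arguments (in the spirit of Bonk-Merenkov) guarantee that any quasisymmetry of $\Ju(f)$ sends peripheral circles to peripheral circles, giving a bijection of $\xi$ on the countable family $\{\partial D_k\}$.

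Next I would exploit the dynamics. The map $f$ is expanding in a suitable orbifold metric on a neighborhood of $\Ju(f)$, so the iterated $f$-preimages of a fixed initial partition produce a Markov cell decomposition of $\Ju(f)$ at all scales. The family $\{\partial D_k\}$ is $f$-equivariant and hierarchically organized: every peripheral circle is (pre)periodic, and small peripheral circles arise as preimages of a finite number of periodic ones. The strategy is to use expansion to propagate local geometric information about $\xi$ to every scale, so that the combinatorial action of $\xi$ on a sufficiently large finite subfamily of peripheral circles already determines $\xi$ on a dense subset of $\Ju(f)$.

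The technical heart of the argument is a three-circle rigidity lemma: if $\eta$ is a quasisymmetry of $\Ju(f)$ that fixes, setwise, three peripheral circles in suitably general position, then $\eta = \id$. To prove this I would use a \emph{carpet modulus} --- a conformal modulus adapted to curve families in $\Ju(f)$ separating two prescribed peripheral circles --- which $\eta$ must preserve up to a bounded multiplicative factor. Combined with the dynamical self-similarity from $f$, one can compute this modulus on nested families of Markov cells and pin down $\eta$ combinatorially on finer and finer scales; passing to the limit forces $\eta$ to fix every peripheral circle and hence to be the identity on $\Ju(f)$.

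Given the lemma, the theorem follows by $3$-transitivity of the Möbius group: choose a Möbius transformation $\mu$ whose action on three reference peripheral circles matches that of $\xi$ in the appropriate sense (matching, say, the complementary disks and two distinguished points on each), and apply the lemma to $\eta := \mu^{-1}\circ\xi$ to conclude $\xi = \mu|_{\Ju(f)}$. The main obstacle will be the three-circle rigidity lemma; the difficulty lies in turning quasi-invariance of carpet modulus into genuine invariance, and then leveraging this into a combinatorial rigidity statement on every Markov cell. This is precisely where postcritical finiteness is used essentially, through the expansion of $f$ and the finiteness of the Markov structure --- the analogous statement is not expected to hold for arbitrary Sierpi\'nski carpets.
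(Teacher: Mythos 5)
Your proposal has a genuine gap at the reduction step, before one even reaches the ``technical heart.'' The M\"obius group is sharply $3$-transitive on \emph{points} of $\hC$, not on triples of disjoint Jordan curves or disks: three pairwise disjoint round disks already carry $9$ real parameters against the $6$ of the M\"obius group, so the configuration of three disjoint complementary components has nontrivial moduli. Hence, given three reference peripheral circles $J_1,J_2,J_3$ and their images $\xi(J_1),\xi(J_2),\xi(J_3)$, there is in general \emph{no} M\"obius transformation $\mu$ with $\mu(J_i)=\xi(J_i)$ (matching ``two distinguished points on each'' only makes this worse, as that is six point conditions). Without such a $\mu$, the map $\eta=\mu^{-1}\circ\xi$ does not fix the three circles setwise and your rigidity lemma, even granted, cannot be invoked. (For comparison: the paper only uses the three-disjoint-disk observation in the opposite, easy direction, in the proof of Corollary~\ref{C:FinGp} --- an orientation-preserving M\"obius map fixing three disjoint closed Jordan regions setwise has three fixed points and is the identity --- to get discreteness of the M\"obius stabilizer of $\Ju(f)$.)

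The second, deeper problem is that the three-circle rigidity lemma \emph{is} the theorem, and the proposed proof of it is not an argument. Carpet modulus is indeed genuinely (not just quasi-) invariant under quasisymmetries of carpets with uniformly relatively separated uniform quasicircles, so that is not the obstacle; the obstacle is that the modulus computations in \cite{BM} lean on the special structure of square carpets (distinguished peripheral circles, a group of Euclidean similarities relating cells, reflection symmetries), none of which is available here. The self-similarity of $\Ju(f)$ is supplied by a non-invertible branched covering, and it is not explained how ``pinning down $\eta$ combinatorially on Markov cells'' would force $\eta=\id$. The paper takes an entirely different route: extend $\xi$ quasiconformally to $\hC$ (Theorem~\ref{T:BoP5.1}, using Theorem~\ref{thm:circgeom}); form the local symmetries $h_k=\xi^{-1}\circ f^{m_k}\circ\xi\circ f^{-n_k}$ by blowing down at a repelling periodic point and blowing up with the conformal elevator; transport to a round carpet and use the Schottky-map rigidity results (Theorems~\ref{T:Me3T5.3} and \ref{C:Me3C4.2}) to show the sequence stabilizes, yielding the functional equation \eqref{eq:funddynrel}; and finally use that equation, Lemma~\ref{L:Rot}, and the lifting lemma to extend $\xi$ conformally into every Fatou component, so that measure zero of $\Ju(f)$ forces the extension to be M\"obius. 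If you want to salvage your outline, you would need both a correct normalization step replacing the false $3$-transitivity claim and a complete proof of the rigidity lemma; the latter is where essentially all of the paper's machinery would reappear.
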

Here a {\em M\"obius transformation} is a fractional linear transformation on $\hC$ or a complex-conjugate of such a map; so a M\"obius transformation can be orientation-reversing.


An example of a rational map as in the statement of Theorem~\ref{thm:main1} is $f(z)=z^2-1/(16 z^2)$; see Figure~\ref{F:JSSC} for its Julia set. 
\begin{figure}
[htbp]
\begin{center}
\includegraphics[height=50mm]{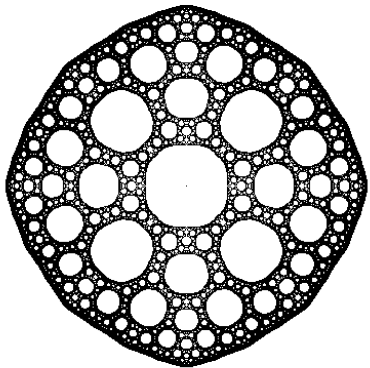}
\caption{
The Julia set of $f(z)=z^2-\frac1{16 z^2}$.
}
\label{F:JSSC}
\end{center}
\end{figure}

The six critical points of this map are $0, \infty$, and $\sqrt[4]{-1}/2$. The postcritical set consists of four points: $0, \infty$, and $\sqrt{-1}/2$. The point  $\infty$ is a fixed point and forms the only periodic cycle; so $f$ is a hyperbolic postcritically-finite rational map. The group $G$ of M\"obius transformations that leave $\Ju(f)$ invariant contains the maps $\xi_1(z)=i z,\ \xi_2(z)=\overline z$, and $\xi_3(z)=1/(4z)$. It is likely that these maps actually generate $G$.  

It was shown by Levin~\cite{Le, Le1} (see also~\cite{LP})
that if $f$ is a hyperbolic rational map that is not equivalent to $z^d,\ d\in\Z$, and whose Julia set $\Ju(f)$ is neither the whole sphere, a circle,  nor an arc of a circle, then the group of M\"obius transformations that keep $\Ju(f)$ invariant is finite. 

In our context, we have the following   corollary  to Theorem~\ref{thm:main1}.

\begin{corollary}\label{C:FinGp}
If $\Ju(f)$ is a Sierpi\'nski carpet Julia set of a postcritical\-ly-finite rational map $f$, then the group of quasisymmetries  of $\Ju(f)$ is finite. 
\end{corollary}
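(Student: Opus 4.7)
The plan is to combine Theorem~\ref{thm:main1} with the finiteness theorem of Levin cited in the introduction. First, I would use Theorem~\ref{thm:main1} to identify the group $\mathrm{QS}(\Ju(f))$ of quasisymmetries of $\Ju(f)$ with the group $G$ of M\"obius transformations of $\hC$ that leave $\Ju(f)$ setwise invariant. Theorem~\ref{thm:main1} says every $\xi\in \mathrm{QS}(\Ju(f))$ is the restriction of some M\"obius transformation, and this extension is unique because $\Ju(f)$ contains more than three points. Conversely, each $g\in G$ restricts to a self-homeomorphism of $\Ju(f)$ that is smooth in the ambient sphere, hence bi-Lipschitz and in particular quasisymmetric on $\Ju(f)$ in the spherical metric. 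The restriction map thus defines a group isomorphism $G\cong \mathrm{QS}(\Ju(f))$, and the corollary is reduced to showing that $G$ is finite.

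Next, I would verify that $f$ satisfies the hypotheses of Levin's theorem. Since $\Ju(f)$ is a Sierpi\'nski carpet, it is not all of $\hC$, not a round circle, and not an arc of a circle, so the topological restrictions in Levin's theorem hold; in particular $f$ is not conformally conjugate to any power map $z\mapsto z^d$, whose Julia set is a circle. The remaining hypothesis is hyperbolicity. For a postcritically-finite rational map the only periodic Fatou cycles that can occur are superattracting, because parabolic, Siegel, Herman, and non-super attracting cycles would all produce an infinite postcritical orbit. Under the added assumption that $\Ju(f)$ is a carpet---so that $\Ju(f)$ has empty interior and its complement is a dense union of Jordan regions---every critical point should lie in the basin of one of these superattracting cycles, making $f$ hyperbolic. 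Levin's theorem then gives $|G|<\infty$, and combined with the first step this yields $|\mathrm{QS}(\Ju(f))|<\infty$.

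The main obstacle I anticipate is the hyperbolicity verification in the second step: one has to rule out the possibility that some critical point of $f$ is strictly preperiodic to a repelling cycle lying in $\Ju(f)$ (such phenomena do occur for general postcritically-finite maps, as in $z\mapsto z^2-2$, but those Julia sets are never carpets). I expect this can be handled by a local analysis at a hypothetical critical point on $\Ju(f)$, using the carpet structure of $\Ju(f)$ together with the behavior of $f$ near a critical point as a branched cover. If this step turns out to be more delicate than expected, a natural fallback is to invoke a subhyperbolic variant of Levin's theorem, since Levin's argument ultimately rests on expansion of $f$ on $\Ju(f)$, a property that remains available for postcritically-finite maps once one passes to the canonical orbifold metric.
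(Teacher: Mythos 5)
Your first step---using Theorem~\ref{thm:main1} to identify the group of quasisymmetries of $\Ju(f)$ with the group $G$ of M\"obius transformations preserving $\Ju(f)$---is correct and is exactly how the paper begins its proof. The gap is in the second step. Levin's theorem, as cited, requires $f$ to be \emph{hyperbolic}, and your claim that postcritically-finite plus carpet forces hyperbolicity is false: there are postcritically-finite maps whose free critical points lie in the Julia set (they land on repelling cycles) and whose Julia sets are nevertheless Sierpi\'nski carpets. The Misiurewicz--Sierpi\'nski parameters in the family $z^2+\lambda/z^2$ studied by Devaney and Look provide such examples. This is precisely why the paper develops its machinery (the conformal elevator, the degree bounds of Lemma~\ref{lem:deg}, the treatment of $\crit$ in Lemma~\ref{L:Schmaps}) for \emph{subhyperbolic} maps rather than hyperbolic ones; a local analysis at a critical point on $\Ju(f)$ will not rule these examples out, because they exist. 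Your fallback---a ``subhyperbolic variant of Levin's theorem''---is essentially the statement that needs to be proved; it cannot simply be invoked.

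What the paper actually does is prove finiteness of $G$ directly. It first shows the orientation-preserving subgroup $H$ is discrete: a M\"obius transformation moving every point by less than a small $\delta_0$ must preserve three pairwise disjoint closed complementary Jordan regions of the carpet, hence has three fixed points, hence is the identity. It then rules out loxodromic elements by running the blow-down/blow-up argument of the main theorem with $\xi^{-n_k}$ at a repelling fixed point of $\xi$ and the conformal elevator for $f$, obtaining a functional equation $f^k\circ\xi^l=f^m$ whose degree count and limiting behavior give a contradiction; it rules out parabolic elements using the fact (Theorem~\ref{thm:circgeom}) that peripheral circles occur on all locations and scales, so some complementary component contains a Euclidean disk too large for the translation to move off itself; the remaining elliptic elements are torsion by discreteness, and a discrete M\"obius group all of whose nontrivial elements are torsion is finite. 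If you want to salvage your approach, you would need to supply an argument of this kind (or an independent proof of Levin-type finiteness for subhyperbolic maps), not merely a hyperbolicity check.
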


See the end of Section~\ref{s:proof} for the proof. An immediate consequence is the following 
fact; see~\cite[Corollary~1.3]{Me4} for a related result.

\begin{corollary}\label{C:RatGroup}
No Sierpi\'nski carpet Julia set of a postcritically-finite rational map is quasisymmetrically equivalent to the limit set of a Klei\-nian group.
\end{corollary}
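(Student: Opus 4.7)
The plan is to derive Corollary~\ref{C:RatGroup} directly from Corollary~\ref{C:FinGp}, using only the basic observation that the limit set of a non-elementary Kleinian group carries an infinite group of M\"obius self-maps. I would argue by contradiction.

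Assume that $\xi \co \Ju(f) \to \Lambda$ is a quasisymmetric homeomorphism of a Sierpi\'nski carpet Julia set $\Ju(f)$ of a postcritically-finite rational map $f$ onto the limit set $\Lambda$ of some Kleinian group $\Gamma$. Since $\Lambda$ is then itself a Sierpi\'nski carpet, it is infinite and is not contained in any round circle. In particular $\Gamma$ is non-elementary (elementary Kleinian groups have limit set of cardinality at most two), and so $\Gamma$ is an infinite group. Each $\gamma \in \Gamma$ is a M\"obius transformation of $\hC$ preserving $\Lambda$, and thus restricts to a quasisymmetry of $\Lambda$. The restriction map $\gamma \mapsto \gamma|_\Lambda$ is injective: if $\gamma_1|_\Lambda = \gamma_2|_\Lambda$, then $\gamma_1 \gamma_2^{-1}$ fixes every point of the infinite set $\Lambda$, and any M\"obius transformation fixing three or more points is the identity. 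Hence the quasisymmetry group of $\Lambda$ is infinite.

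Conjugating by $\xi$, the map $\eta \mapsto \xi^{-1} \circ \eta \circ \xi$ carries this infinite group of quasisymmetries of $\Lambda$ into a group of quasisymmetries of $\Ju(f)$, contradicting the finiteness asserted in Corollary~\ref{C:FinGp}.

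No serious obstacle is anticipated; the statement is essentially a repackaging of Corollary~\ref{C:FinGp}. The only point requiring a small check is the injectivity of the restriction $\Gamma \to \mathrm{QS}(\Lambda)$, which is handled uniformly whether ``Kleinian group'' is interpreted as a discrete group of orientation-preserving M\"obius transformations or allowed to include orientation-reversing ones, since a M\"obius transformation of either orientation whose fixed-point set contains three points not lying on a common round circle must be the identity.
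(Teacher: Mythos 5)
Your argument is correct and is essentially the paper's own proof: the paper likewise deduces the corollary from Corollary~\ref{C:FinGp} by observing that a Kleinian group acts on its limit set, giving that set an infinite group of quasisymmetries. Your additional checks (non-elementariness and injectivity of the restriction $\Gamma\to\mathrm{QS}(\Lambda)$) simply make explicit what the paper leaves implicit.
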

Indeed, the Kleinian  group acts on its limit set, and so  limit set has an infinite group of 
quasisymmetries. 

Theorem~\ref{thm:main1} is a special case of the following statement, which is the  main result
of this paper.

\begin{theorem}\label{thm:main2}
Let $f , g\: \hC\ra \hC$ be postcritically-finite  rational  maps, and suppose the corresponding Julia sets $\Ju(f)$ and $\Ju(g)$  are Sierpi\'nski carpets. If  $\xi$ is a quasisymmetric homeomorphism of $\Ju(f)$ onto $\Ju(g)$, then $\xi$ is the restriction to $\Ju(f)$ of a M\"obius transformation on $\hC$. 
\end{theorem}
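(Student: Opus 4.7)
The plan is to exploit the interaction between the quasisymmetric structure of $\xi$ and the conformal dynamics of $f$ and $g$, leveraging the Markov/self-similarity properties inherited from postcritical-finiteness to promote the quasisymmetry to full Möbius rigidity.

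First, a topological reduction. Since the peripheral circles of a Sierpi\'nski carpet are characterized as those Jordan curves in the carpet whose removal does not disconnect it, $\xi$ induces a bijection between the peripheral circles of $\Ju(f)$ and those of $\Ju(g)$. These are exactly the boundaries of the Fatou components $U_k$ of $f$ and $V_k$ of $g$. For a postcritically-finite rational map whose Julia set is a carpet, every Fatou component is eventually iterated onto a cycle of Fatou components containing postcritical points; standard B\"ottcher/linearization theory plus expansion (in a suitable orbifold metric when critical points lie on $\Ju$) shows that each $\partial U_k$ is a quasicircle with \emph{uniform} distortion constant, and that the family $\{U_k\}$ has uniformly bounded geometry when measured against its neighbors.

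Second, a dynamical refinement. Using the Markov structure supplied by iterating $f^{-1}$ and $g^{-1}$ starting from a finite system of tiles, I would produce, for every $n$, a ``tile graph'' covering $\Ju(f)$ (respectively $\Ju(g)$) whose cells shrink uniformly and whose geometry is controlled by the orbifold expansion of $f$. The quasisymmetry $\xi$ must send $n$th generation tiles to sets with controlled eccentricity in the target tiling, so up to passing to a subsequence and normalizing, the pullbacks $g^{-n}\circ\xi\circ f^{n}$ stay in a normal family of quasisymmetric maps. A limit point of this family is a quasisymmetry that respects the Markov combinatorics of $\Ju(g)$.

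Third, the rigidity step. I would fix a peripheral circle $C = \partial U_0$ and show that $\xi|_C$ is Möbius. The key is that on $C$, the iterates $f^n$ define a dense collection of ``landing points'' of preperiodic peripheral circles accumulating on $C$; their relative cross-ratios on $C$ are determined by the dynamics and are therefore identical to those of their images on $\xi(C)\subset\Ju(g)$ (using transboundary or carpet modulus estimates in the spirit of the square-carpet results of Bonk--Merenkov). A quasisymmetric self-map of a quasicircle that preserves a dense set of cross-ratios is the restriction of a Möbius transformation. Once $\xi$ is known to be Möbius on one peripheral circle, I would propagate the conclusion: composing with a global Möbius map, I may assume $\xi$ fixes $C$ pointwise, and then use that iterated $f$-preimages of $C$ are dense in $\Ju(f)$ while $\xi$ intertwines the combinatorics to conclude $\xi=\id$ on a dense subset of $\Ju(f)$, hence on all of $\Ju(f)$.

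The principal obstacle I expect is the cross-ratio rigidity on a single peripheral circle: one needs a quantitative modulus comparison for families of curves in the carpet that separate prescribed pairs of peripheral circles, together with enough dynamically-generated peripheral circles accumulating on $C$ to pin down the Möbius structure. Postcritical-finiteness is what makes this possible, because it bounds the combinatorial complexity of the tile graphs and, via the canonical orbifold metric, supplies the uniform expansion even when some critical points lie in $\Ju(f)$; without PCF (say for merely hyperbolic maps) the distortion estimates on tiles could conceivably fail to be uniform. A secondary technicality is the parabolic-type PCF case, where the orbifold metric rather than the spherical metric must be used throughout the bounded-geometry arguments, but the scheme above should go through unchanged in that metric.
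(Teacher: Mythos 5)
There is a genuine gap, and it sits at the heart of your Step three. You assert that the ``landing points'' of dynamically generated peripheral circles on $C=\partial U_0$ have cross-ratios that ``are determined by the dynamics and are therefore identical to those of their images on $\xi(C)$.'' This presupposes that $\xi$ intertwines the Markov combinatorics of $f$ with that of $g$ — i.e.\ that $\xi$ essentially conjugates the dynamics. But that is precisely what is \emph{not} assumed in the theorem: $\xi$ is only a quasisymmetry of the carpets, with no a priori relation to $f$ or $g$ (and if $\xi$ did conjugate them, the conclusion would follow from Thurston rigidity and the result would not be new). A quasisymmetry is free to permute peripheral circles and to distort positions along a fixed peripheral circle in highly non-Möbius ways, so nothing forces the cross-ratios of those landing points to be preserved. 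The same unproved intertwining is invoked again in your propagation step (``$\xi$ intertwines the combinatorics''), and in Step two you form $g^{-n}\circ\xi\circ f^{n}$ with unspecified branches of $g^{-n}$ and with the same iterate $n$ on both sides, which has no meaning when $\deg f\neq\deg g$ and in any case is never used to produce a rigidity statement. The appeal to carpet-modulus/cross-ratio rigidity in the spirit of the square-carpet results also does not transfer: those arguments rely on explicit symmetric configurations of peripheral circles that a Julia-set carpet does not offer.

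The actual proof has to \emph{derive} a dynamical relation for $\xi$ rather than assume one. After extending $\xi$ quasiconformally to $\hC$, one forms the local symmetries $h_k=\xi^{-1}\circ g^{m_k}\circ\xi\circ f^{-n_k}$ (blow-down by branches of $f^{-n_k}$ at a repelling periodic point off $\post(f)$, blow-up by the conformal elevator for $g$), shows they subconverge to a non-constant quasiconformal limit, and then — after uniformizing $\Ju(f)$ to a round carpet so that the $h_k$ become Schottky maps — uses Merenkov's rigidity theorems for Schottky maps on locally porous relative Schottky sets to show the sequence \emph{stabilizes}. Only this yields the functional equation $g^{m'}\circ\xi=g^m\circ\xi\circ f^n$ on $\Ju(f)$, which is the surrogate for the conjugacy you assumed. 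Even then one is not done: the equation is analyzed on each periodic Fatou boundary via a functional equation for circle homeomorphisms intertwining power maps, extended to all Fatou components by a lifting argument, and the resulting piecewise-conformal extension is shown to be $1$-quasiconformal because $\Ju(f)$ has measure zero. Your proposal is missing the entire mechanism that produces the dynamical relation, which is the core of the theorem.
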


Here and in Theorem~\ref{thm:main1}   the map  $\xi$ does not respect the dynamics  {\it a priori}; in particular, we do not assume that $\xi$ conjugates $f$ and $g$.  If $\xi$ conjugates the two rational maps, then the result is  
well known.  It follows  from the uniqueness part of Thurston's characterization of postcritically-finite   maps that can be extracted from \cite{DH}.

The statement of Theorem~\ref{thm:main2} is false if we drop  the assumption that the maps are postcritically-finite.  
Indeed, each hyperbolic rational map $f_0$ is quasiconformally structurally stable near its Julia set, 
i.e., for any rational map $f$  sufficiently close to $f_0$, there exist 
(backward invariant) neighborhoods $U_0\supset \Ju(f_0)$ and 
$U\supset \Ju(f)$ and a quasiconformal homeomorphism $h: U_0\ra U$ such that $h(f_0 (z)) = f(h(z))$ for each $z\in f_0^{-1}(U_0)$
(see \cite{MSS}). This conjugacy is quasisymmetric on the Julia set. Moreover, $f$ can be selected so that $h$ is not a M\"obius transformation
(essentially due to the fact that a rational map of degree $d\geq 2$ depends on $2d+1>3 $ complex parameters, 
while a M\"obius transformation depends only on  $3$).
This discussion is applicable, e.g., to the previously mentioned hyperbolic rational map  $f_0(z)=z^2-1/(16 z^2)$ 
whose   Julia set is Sierpi\'nski carpet. 



\subsection{Previous rigidity results for quasisymmetries}\label{SS:PRR} 
In~\cite{BKM}  the authors considered quasisymmetric rigidity questions for {\em Schottky sets}, i.e., subsets of the $n$-dimensional sphere $\Sph^n$ whose complements are collections of open balls with disjoint closures. A Schottky set $S$ is said to be {\em rigid} if each quasisymmetric map of $S$ onto any other Schottky set is the restriction to $S$ of a M\"obius transformation. The main result  in dimension $n=2$ is the following theorem.

\begin{theorem}\label{T:BKMT1.2}\cite[Theorem~1.2]{BKM}
A Schottky set in $\Sph^2$ is rigid if and only if it has spherical measure zero.
\end{theorem}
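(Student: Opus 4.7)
The plan is to prove the two implications by quite different techniques. For the ``positive measure implies non-rigid'' direction, suppose $S$ has positive spherical measure. One constructs a nontrivial Beltrami coefficient $\mu$ with $\|\mu\|_\infty<1$ supported on $S$ that is equivariant under the group $\Gamma$ generated by reflections in the peripheral circles of $S$: starting with some nonzero $\mu_0$ supported on a subset of $S$ of positive measure, one symmetrizes over the $\Gamma$-orbit, and since $\Gamma$ preserves $S$ and permutes the peripheral circles, the result is still supported in $S$. Solving the Beltrami equation for $\mu$ yields a QC map $F\colon\Sph^2\to\Sph^2$ which intertwines $\Gamma$ with a conjugate reflection group; hence $F$ sends round circles to round circles, so $F(S)$ is another Schottky set and $F|_S$ is a quasisymmetric homeomorphism onto it. Since $\mu\equiv 0$ on each complementary ball $B_k$, the map $F$ is conformal there and, being a conformal map of the round disk $B_k$ onto another round disk, is M\"obius on each $B_k$. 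If $F|_S$ agreed with the restriction of some global M\"obius map $M$, then $F|_{\partial B_k}=M|_{\partial B_k}$ would force $F=M$ on each $B_k$ by uniqueness of M\"obius extensions of M\"obius boundary data, hence $F=M$ globally, contradicting $\mu\ne 0$.

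For the rigidity direction, let $\xi\colon S\to S'$ be a quasisymmetric homeomorphism onto a Schottky set $S'$ and assume $S$ has spherical measure zero. A topological characterization of peripheral circles as non-separating Jordan curves in the carpet first shows that $\xi$ sends peripheral circles of $S$ to peripheral circles of $S'$. For each $k$ let $\rho_k$ and $\rho_k'$ be the reflections in $\partial B_k$ and in its image circle in $S'$; define
\[
\tilde\xi(z)=\rho_k'\circ\xi\circ\rho_k(z)\ \text{for}\ z\in\rho_k(S)\cap B_k,
\]
pushing $\xi$ one ``depth'' into $B_k$. Iterating produces an extension of $\xi$ to the orbit $\Gamma\cdot S$. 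Because the diameters of the nested reflected balls shrink to zero, the partial extensions assemble into a continuous homeomorphism $\hat\xi\colon\Sph^2\to\Sph^2$; on each copy $\gamma\cdot S$ the map $\hat\xi$ is a M\"obius conjugate of $\xi$, hence quasisymmetric with the same distortion, and a removability argument promotes $\hat\xi$ to a globally QC homeomorphism whose Beltrami coefficient is supported in $\Gamma\cdot S$. Since $\Gamma$ is countable and M\"obius maps preserve null sets, $\Gamma\cdot S$ has spherical measure zero; consequently $\hat\xi$ is $1$-quasiconformal and therefore a M\"obius transformation whose restriction to $S$ recovers $\xi$.

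The principal obstacle lies in making the iterated reflection extension $\hat\xi$ into a bona fide QC homeomorphism of $\Sph^2$: one must verify that the diameters of balls produced by iterated reflections shrink uniformly, control the ``residual set'' of points not in any $\gamma\cdot S$, prove continuity of $\hat\xi$ across this residual set, and obtain uniform QC bounds across all depths of reflection so that a removability theorem applies. The non-rigidity direction has a parallel technical issue: the $\Gamma$-symmetrization of $\mu_0$ must not destroy the nonzero mass, so the initial perturbation has to be chosen carefully---for instance supported on a positive-measure fundamental subdomain of $S$ for the $\Gamma$-action.
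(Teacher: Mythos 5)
This theorem is not proved in the present paper; it is quoted from \cite{BKM}, so your proposal has to be measured against the proof there. Your architecture --- the reflection group $\Gamma$ generated by the peripheral circles, an equivariant extension for the rigidity direction, and an invariant Beltrami deformation for the converse --- is indeed the strategy of \cite{BKM}. But both directions break down at the decisive step. In the rigidity direction, the claim that $\hat\xi$ has ``Beltrami coefficient supported in $\Gamma\cdot S$'' is unjustified and in fact backwards. On each copy $\gamma\cdot S$ you only know that $\hat\xi$ is quasisymmetric, and quasisymmetry on a set of measure zero carries no information about the a.e.-defined complex dilatation. When $S$ is null, the orbit $\Gamma\cdot S$ is null, so the dilatation of $\hat\xi$ lives entirely on the complementary ``residual set'' of points contained in infinitely many nested reflected disks --- a set of \emph{full} measure --- and nothing in your argument forces it to vanish there. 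The missing idea, which is the heart of the proof in \cite{BKM}, is this: at a.e.\ point $x$ of the residual set, $\hat\xi$ is differentiable with invertible differential, and by equivariance it maps the nested sequence of round circles through $x$ (whose diameters shrink to $0$) onto round circles; a nondegenerate differential sending shrinking round circles to exactly round curves must be conformal. This is what yields vanishing of the Beltrami coefficient a.e.\ and hence $1$-quasiconformality.

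In the non-rigidity direction there is an internal inconsistency: $\Gamma$ does \emph{not} preserve $S$ (the reflection $\rho_k$ maps $S$ into $\overline{B_k}$), so the $\Gamma$-equivariant spread of $\mu_0$ is supported on the whole orbit $\Gamma\cdot S$, not on $S$. Consequently $\mu\not\equiv0$ on the disks $B_k$, the map $F$ is not conformal there, and your ``M\"obius on each $B_k$'' contradiction collapses. (If instead you keep $\mu$ supported only on $S$ and give up equivariance, then $F\circ\rho_k\circ F^{-1}$ need not be anti-conformal, $F(\partial B_k)$ need not be round, and $F(S)$ need not be a Schottky set, so $F|_S$ does not even test rigidity.) The correct way to finish is a Lebesgue density argument: if $F|_S=M|_S$ for a M\"obius transformation $M$, then $G=M^{-1}\circ F$ is quasiconformal with Beltrami coefficient $\mu$, equals the identity on the positive-measure set $S$, and is differentiable with nonzero Jacobian a.e.; at a.e.\ density point of $S$ its differential must therefore be the identity, so $\mu=0$ a.e.\ on $S$, contradicting the choice of $\mu_0$. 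Your worry about symmetrization ``destroying mass'' is moot, since the sets $\gamma(S)$ are essentially disjoint and $S$ itself is a fundamental domain for the action on the orbit. The technical points you do flag (shrinking of nested disks, continuity and quasiconformality of the glued extension) are genuine but routine; the two density/differentiability arguments above are where the theorem is actually won, and both are absent from the proposal.
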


In higher dimensions the ``if" part of this statement is still true, but not the ``only if" part.

A {\em relative Schottky set} in a region $D\subseteq\Sph^n$ is a subset of $D$ obtained by removing from $D$ a collection of balls whose closures are contained in $D$ and are pairwise disjoint. 

\begin{theorem}\label{T:BKM8.1}\cite[Theorem~8.1]{BKM}
A quasisymmetric map from a locally porous relative Schottky set $S$ in $D\subseteq\Sph^n$ for $n\geq 3$ onto a relative Schottky set $\tilde S\subseteq\Sph^n$ is the restriction to $S$ of a M\"obius transformation. 
\end{theorem}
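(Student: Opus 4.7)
The plan is to extend $\xi$ to a quasiconformal homeomorphism of $D$ onto $\tilde D$ by iterated reflection through peripheral spheres, and then to prove that this extension is in fact $1$-quasi\-confor\-mal, so that Liouville's theorem in dimension $n\geq 3$ identifies it as a M\"obius transformation.

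The first step is the extension. A quasisymmetry between relative Schottky sets must carry peripheral spheres to peripheral spheres: these spheres can be characterized purely topologically (for instance as the local non-separating boundary components inside $D$), so $\xi$ respects the peripheral structure. For each removed ball $B \subset D$ with corresponding image ball $\tilde B \subset \tilde D$, let $R$ and $\tilde R$ denote the M\"obius reflections across $\partial B$ and $\partial \tilde B$. The formula $x \mapsto \tilde R(\xi(R(x)))$ extends $\xi$ through $\partial B$ into $B$. Iterating this procedure across peripheral spheres, and then through the peripheral spheres of the reflected copies, populates a dense subset of $D$; local porosity ensures that the union of all such reflected copies of $S$ has full measure and sufficiently controlled metric geometry for the extension to close up to a homeomorphism $\Xi\co D\to \tilde D$ that is quasiconformal with the same distortion constant as $\xi$.

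The crucial step is to promote $\Xi$ to a $1$-quasiconformal map. The reflection procedure conjugates $\Xi$ with itself: on appropriate pieces one has $\Xi\circ R = \tilde R\circ \Xi$, so the pointwise dilatation $K_\Xi(x)$ is invariant under the infinite reflection group generated by the $R$'s acting on $D$. The orbit of a typical point $x$ under this group accumulates at $x$ across arbitrarily small scales, so a Lebesgue density or blow-up argument---powered by local porosity---propagates this invariance to show that $D\Xi$ is almost everywhere a scalar multiple of an orthogonal matrix. Once this is established, Liouville's theorem, which in $\R^n$ for $n\geq 3$ asserts that every $1$-quasi\-confor\-mal map is the restriction of a M\"obius transformation, completes the proof.

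I expect the main obstacle to be the second step, namely upgrading \emph{bounded} dilatation to dilatation \emph{one}. Merely knowing that $\Xi$ is conjugated to itself by reflections does not immediately pin down its infinitesimal behavior; the argument requires both local porosity, which supplies enough reflected copies of $S$ to saturate neighborhoods in measure, and the higher-dimensional regularity theory, which ensures that $\Xi$ is differentiable a.e.\ and that Liouville rigidity is available. In dimension $n=2$ this strategy collapses, consistent with Theorem~\ref{T:BKMT1.2}.
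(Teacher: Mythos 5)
Your overall strategy---extend by reflection, show the extension is $1$-quasiconformal, invoke Liouville---is the right family of ideas (it is essentially how \cite{BKM} treats global Schottky sets), but two of your key steps do not work as written. First, the measure claim underlying your extension is false: local porosity forces $S$ to have measure \emph{zero} (it has no Lebesgue density points), so each reflected copy $\gamma(S)$ has measure zero and the countable union of all of them has measure zero, not full measure. Your formula $x\mapsto \tilde R(\xi(R(x)))$ therefore only defines the map on a measure-zero subset of each ball, and iterating never ``closes up'' to a homeomorphism of $D$. To get an actual quasiconformal $\Xi$ one needs a genuine extension theorem into the complementary balls (a Tukia--V\"ais\"al\"a-- or Beurling--Ahlfors--type extension of the boundary maps $\xi|_{\partial B}$), arranged equivariantly, and for a \emph{relative} Schottky set one must in addition localize away from $\partial D$, where the reflected images of $\Sph^n\setminus D$ obstruct the tiling.

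Second, and more seriously, group-invariance of the pointwise dilatation cannot yield $K_\Xi\equiv 1$: that invariance is equally satisfied by $K_\Xi\equiv K$ for any constant $K$, so even granting dense orbits (which typical points do not have---the reflection group acts properly discontinuously off its limit set) you would at best conclude that $K_\Xi$ is constant, not that it is $1$. The mechanism that actually forces conformality is geometric: by local porosity, near almost every point of differentiability of $\Xi$ there are round peripheral spheres at all sufficiently small scales, at distance comparable to their diameters, and $\Xi$ maps each of them onto a round sphere; a linear map (the blow-up $D\Xi(x)$) that takes round spheres to round spheres is conformal. Note also that this conformality step works in $n=2$ as well---that is exactly the content of Theorem~\ref{T:Me2T1.2}---so your closing diagnosis misplaces the role of $n\ge 3$. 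The dimension restriction enters only at the very end: Liouville's theorem is a \emph{local} statement for $n\ge 3$, whereas in the plane a $1$-quasiconformal map of a subregion $D$ is merely conformal and need not be M\"obius; since a relative Schottky set only ever yields a map on a subregion, this is precisely where the planar case fails (Theorem~\ref{T:BKMT1.2}, a positive result for global measure-zero Schottky sets in $\Sph^2$, is not the relevant comparison).
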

For the definition of a {\em locally porous} relative Schottky set see the 
end of Section~\ref{s:Schottky} (where we define it only for  $n=2$, but the definition is essentially the same for $n\ge 3$). 

Note that M\"obius transformations preserve the classes of Schottky and relative Schottky sets. 

Theorem~\ref{T:BKM8.1} is no longer true in dimension $n=2$. However, the third author proved the following result.

\begin{theorem}\label{T:Me2T1.2}\cite[Theorem~1.2]{Me2}
Suppose that $S$ is a relative Schottky set of measure zero in a Jordan region $D\subset\C$. Let $f\: S\to\tilde S$ be a locally quasisymmetric orientation-preserving homeomorphism from $S$ onto a relative Schottky set $\tilde S$ in a Jordan region $\tilde D\subset\C$. Then $f$ is conformal in $S$ in the sense that
$$
f'(p) =\lim_{q\in S,\, q\to p}\frac{f(q)-f(p)}{q-p}
$$
exists  for every $p\in S$ and is not equal to zero. Moreover, the map $f$ is locally bi-Lipschitz and the derivative $f'$ is continuous  on $S$.
\end{theorem}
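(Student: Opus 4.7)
The plan is a local analysis at each $p \in S$: I would extend $f$ quasiconformally to a planar neighborhood of $p$ and then use the measure-zero hypothesis to upgrade the extension to a conformal map, from which the differentiability properties of $f$ on $S$ follow directly.

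First, I would fix $p \in S$ and a small disk $U \subset D$ on which $f|_{S \cap U}$ is quasisymmetric. For each complementary disk $B_k \subset U$ with image $\tilde B_k$, the restriction $f|_{\partial B_k}$ is a quasisymmetric circle homeomorphism onto $\partial \tilde B_k$ and extends to a quasiconformal map $B_k \to \tilde B_k$ via the Beurling--Ahlfors extension (after uniformizing each disk to $\D$ by Möbius maps). To facilitate the next step, I would arrange the extension to coincide with a Möbius map $\phi_k \colon B_k \to \tilde B_k$ on a large concentric sub-disk $B_k' \subset B_k$, with the Beurling--Ahlfors correction confined to the thin annulus $B_k \setminus B_k'$. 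Pasting these extensions with $f|_{S \cap U}$ yields a homeomorphism $F \colon U \to \tilde U$, which is quasiconformal by a standard gluing argument, with dilatation controlled by the quasisymmetry constant of $f$.

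Next, I would replace $F$ by a conformal extension $F^\ast$ that still agrees with $f$ on $S \cap U$. To do so, I consider a sequence of such extensions $F_n$ with the auxiliary annuli $B_k \setminus B_k'$ shrinking to $\partial B_k$ and extract a limit $F^\ast$. The Beltrami coefficient of $F^\ast$ would then be supported on $S \cap U$, which has measure zero, so $\mu_{F^\ast} = 0$ almost everywhere; by the measurable Riemann mapping theorem, $F^\ast$ is a conformal homeomorphism of $U$ onto its image. The pointwise derivative $f'(p) = (F^\ast)'(p)$ therefore exists and is nonzero because $F^\ast$ is a conformal homeomorphism, and $f'$ is continuous on $S$ as the restriction of the continuous function $(F^\ast)'$ on $U$. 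The local bi-Lipschitz property then follows from Koebe's distortion theorem applied to $F^\ast$ on compact subsets of $U$.

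The principal obstacle is the limiting step in the construction of $F^\ast$: as the auxiliary annuli $B_k \setminus B_k'$ shrink, the dilatation of the Beurling--Ahlfors correction on them generally blows up, so the sequence $F_n$ need not converge to a quasiconformal limit without additional care. Overcoming this seems to require combining the measure-zero hypothesis on $S$ with the Schottky geometry to keep the effective dilatation bounded in a compactness sense, for instance by arguing that the set where the dilatation exceeds a given threshold shrinks (in measure) to $S$. An alternative route avoiding this difficulty is a direct blow-up analysis at $p$: one rescales $f$ in domain and range by factors $r_n \to 0$, extracts a quasiconformal sublimit on $\C$, and shows this sublimit is affine linear because its Beltrami coefficient is supported in the measure-zero tangent set of $S$ at $p$. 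The affine linearization across all subsequences then provides $f'(p)$ directly.
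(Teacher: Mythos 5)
This statement is quoted from \cite[Theorem~1.2]{Me2}; the paper itself gives no proof of it, so I can only assess your proposal on its own terms. It has a gap that is not merely technical: the strategy, if it could be completed, would prove a statement that is false. Your plan is to produce a conformal homeomorphism $F^\ast$ of an open neighborhood $U$ of $p$ that agrees with $f$ on $S\cap U$. Since $S$ has measure zero, every such $U$ contains infinitely many complementary disks $B_k$ with $\overline B_k\sub U$ (otherwise $S$ would contain a nonempty open set). An orientation-preserving conformal homeomorphism $F^\ast$ of $U$ agreeing with $f$ on $S\cap U$ would map each such round disk $B_k$ conformally onto the round disk $\tilde B_k$, hence would restrict to a M\"obius transformation on $B_k$, hence by the identity theorem would be M\"obius on all of $U$. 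Thus $f$ would be, locally, the restriction of a M\"obius transformation. That is exactly the two-dimensional analogue of Theorem~\ref{T:BKM8.1}, which the paper explicitly notes is \emph{false} for $n=2$; it is also contradicted by the non-M\"obius Schottky maps $h_k=g_\xi^{m_k}\circ f^{-n_k}$ (conjugated by $\beta$) that the present paper constructs and to which it applies Theorem~\ref{T:Me2T1.2}. The conclusion ``conformal on $S$'' is genuinely weaker than ``restriction of a conformal map of a neighborhood,'' and no amount of care in the limiting step can bridge that.

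Concretely, the step that fails is the one you flag: to make $F_n$ conformal off a shrinking annulus $B_k\setminus B_k'$ while keeping the boundary values $f|_{\partial B_k}$, the correction on that annulus must absorb the full discrepancy between $f|_{\partial B_k}$ and a M\"obius boundary map, so its dilatation necessarily blows up unless $f|_{\partial B_k}$ is already M\"obius. Having the set where the dilatation exceeds a threshold shrink in measure does not yield a quasiconformal (or even ACL) limit; degenerate Beltrami equations require integrability conditions on the dilatation (e.g.\ exponential integrability) that are unavailable here, and in this situation the limit is genuinely not conformal. Your alternative blow-up route has the support of the Beltrami coefficient backwards: any quasiconformal extension of $f$ is non-conformal precisely on the complementary disks, which form a set of \emph{full} measure in $U$ because $S$ has measure zero; hence the rescaled limits are $K$-quasiconformal with $K>1$ in general and there is no reason for them to be affine. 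The actual proof in \cite{Me2} uses the measure-zero hypothesis quite differently, in combination with Schwarz reflections in the peripheral circles and modulus estimates, to control the infinitesimal behaviour of $f$ along $S$ only, without ever producing a conformal extension to an open set.
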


We will call conformal  maps $f$ as above {\em Schottky maps}, see Section~\ref{s:Schottky}.   

Another fractal space for which a quasisymmetric rigidity result has been established is the {\em slit carpet}  $S_2$ obtained as follows. We start with a closed unit square $[0,1]^2$ in the plane and subdivide it into $2\times2$ equal subsquares in the obvious way. We then create a vertical slit from the middle of the common vertical side  of the top two subsquares to the middle  of the common vertical side of the bottom  two  subsquares. Finally, we repeat these procedures for all the subsquares  and continue indefinitely. The metric on $S_2$ is the path metric induced from the plane.  

\begin{theorem}\label{T:Me1T6.1}\cite[Theorem~6.1]{Me1}
The group of quasisymmetries of $S_2$ is the group of isometries of $S_2$, that is, the finite dihedral group $D_2$ consisting of four elements  and isomorphic to $\Z/2\Z\times \Z/2\Z$.
\end{theorem}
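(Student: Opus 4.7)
The plan is to establish rigidity by isolating a countable dense set of distinguished points in $S_2$ that any quasisymmetry must respect, and then deducing that the induced combinatorial action forces $f$ to be one of the four elements of the isometry group.

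First I would identify the \emph{tips} of the slits as a purely topological invariant: a point $p\in S_2$ is a tip if and only if every sufficiently small punctured neighborhood of $p$ in $S_2$ is disconnected. This property is preserved by homeomorphisms, so any quasisymmetry $f\:S_2\to S_2$ permutes the set of tips. The tips are naturally graded by the generation of the slit they belong to. Next, I would show that $f$ preserves the outer boundary $\partial[0,1]^2 \sub S_2$. One way is to characterize this curve as the unique Jordan curve in $S_2$ that is disjoint from all tips and has tips of every generation accumulating on one side only; the Jordan curves that wind around a single slit fail this property on the side opposite to the slit, and there are no other candidates with this accumulation behavior on all scales.

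The core step is to show that $f$ preserves the generation of each tip. I would attach to each tip $p$ a metric invariant that is comparable to $2^{-n}$ when $p$ has generation $n$, for instance the ratio of diameters of two successive nested slit-surrounding Jordan curves approaching $p$, or a combinatorial modulus of the family of curves joining the two sides of the slit at $p$ to the outer boundary. Quasisymmetries distort such invariants by multiplicative factors depending only on the distortion function $\eta$; combined with the geometric spacing of generations, this forces $f$ to preserve generation exactly. The main obstacle is defining and estimating an appropriate modulus in the path metric on $S_2$, which is not a classical conformal metric and arises from infinitely many boundary identifications; one would likely need to adapt the combinatorial or carpet-type modulus techniques developed in the Schottky-set setting (cf.\ Theorems~\ref{T:BKMT1.2} and~\ref{T:BKM8.1}) to the slit environment.

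Once generation is preserved, I finish by induction. The action of $f$ on the four corners of $[0,1]^2$ (characterized intrinsically as the points on the outer boundary at which this boundary fails to be locally flat in a prescribed sense) and on the two generation-$1$ tips specifies a unique candidate element $g\in D_2$. Working one generation at a time, the combinatorics of which tip lies on which side of which dyadic subsquare is rigid, so $g^{-1}\circ f$ must fix every tip, and hence equal the identity on the dense set of tips. By continuity we conclude $f=g$ on all of $S_2$, which yields Theorem~\ref{T:Me1T6.1}.
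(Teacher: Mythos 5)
The result you are proving is only cited here from \cite{Me1}; the present paper contains no proof of it, so I will address whether your outline could work at all. The first step already fails. By Whyburn's topological characterization, a Sierpi\'nski carpet is a locally connected continuum with no local cut points: for \emph{every} point $p$ of $S_2$, tips included, there exist arbitrarily small connected open neighborhoods $U\ni p$ such that $U\setminus\{p\}$ remains connected. Your proposed criterion for a tip, ``every sufficiently small punctured neighborhood is disconnected,'' is therefore false, and with it the claim that an arbitrary homeomorphism of $S_2$ automatically permutes the tips. Topologically a tip is indistinguishable from any other point of $S_2$; any usable characterization of tips has to be metric, and then proving that it is a quasisymmetric invariant is itself a nontrivial step that your outline does not supply. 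This error also propagates to your characterization of the outer boundary, since it is phrased in terms of tips.

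The step you yourself flag as ``the main obstacle'' --- a modulus- or diameter-ratio invariant in the path metric that pins down the \emph{exact} generation of a tip --- is likewise a genuine gap, not a routine adaptation of the Schottky-set modulus techniques you gesture at. A quasisymmetry controls such ratios only up to the distortion function $\eta$, and nothing in your outline converts this bounded multiplicative distortion into exact generation preservation; a priori a quasisymmetry could shuffle tips within a bounded band of generations, and then the induction in your final step never gets started. The rigidity needed to rule this out does not come from an ad hoc count of distinguished points; in \cite{Me1}, as in the related results \cite{BKM}, \cite{BM}, and in the present paper, it is extracted from global tools --- measure-zero Schottky-type rigidity, quasisymmetric uniformization, and carpet modulus estimates in the spirit of Theorems~\ref{T:BKMT1.2} and~\ref{T:BoC1.2} --- and some global input of this kind is precisely what your outline is missing.
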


Finally, the first and the third authors of this paper proved the following rigidity result for the standard Sierpi\'nski carpet.

\begin{theorem}\cite[Theorem 1.1]{BM}\label{T:BMT1.1}
Every quasisymmetry of the standard Sierpi\'nski carpet $S_3$ is a Euclidean isometry.
\end{theorem}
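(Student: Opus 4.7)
The plan is to show step by step that any quasisymmetry $\xi$ of $S_3$ must preserve increasingly fine geometric structure until only the eight Euclidean symmetries of the unit square remain as possibilities. The starting point is the topological observation that $\xi$ automatically permutes the peripheral circles of $S_3$, since these are characterized topologically as the Jordan curves in $S_3$ whose removal does not disconnect it.

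The first analytic step is to identify the outer peripheral circle (the boundary of the unit square) as quasisymmetrically distinguished from every other peripheral circle of $S_3$. All other peripheral circles are surrounded by the carpet from every side, whereas the outer square is not. This asymmetry should be detectable by a quasisymmetrically invariant modulus, for instance a ``carpet modulus'' of a curve family joining a given peripheral circle to the rest of the peripheral circles, which will take a distinguished value on the outer square. Consequently, $\xi$ must fix the outer square setwise. Using the self-similarity of $S_3$ (which consists of eight isometric copies of itself scaled by $1/3$) together with quasisymmetric control of diameters, I would then show by induction on generations that $\xi$ permutes the peripheral squares of each generation among themselves, preserving the nesting pattern.

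The most delicate step, which I expect to be the main obstacle, is to show that $\xi$ maps the four corners of each peripheral square to corners, rather than to interior points of the sides. This should be accessible via a local quasisymmetric invariant such as the structure of weak tangents: at a corner, the local model of $S_3$ contains two peripheral arcs meeting at a right angle, whereas at a generic boundary point of a peripheral square only one straight peripheral arc is locally visible. The presence of a second, comparably sized peripheral arc approaching from a transverse direction is a quasisymmetric obstruction distinguishing corners from non-corners, and it should be quantifiable by a local modulus argument. Once corner-preservation is established, $\xi$ permutes the four corners of the unit square. Combined with generational preservation and the density in $S_3$ of the set of all corners of all peripheral squares, this forces $\xi$ to agree on a dense set with some element of the dihedral group $D_4$ of isometries of the unit square; continuity then promotes this to equality on all of $S_3$, completing the proof.
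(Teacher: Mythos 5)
You have correctly identified the two quasisymmetric invariants on which the proof in \cite{BM} actually runs---a carpet modulus and the weak tangents at corner points---but the two steps that carry the logical weight of your argument both have genuine gaps. The first is the claim that the outer square $O$ is distinguished among all peripheral circles by a single-circle invariant. Being the boundary of the unbounded complementary component is a feature of the planar embedding, not of the metric space $S_3$ (for round carpets, for instance, M\"obius transformations freely exchange the ``outer'' circle with inner ones), and the invariant you propose---the carpet modulus of the family of curves joining a fixed peripheral circle to all the others---is governed by arbitrarily short curves running to nearby peripheral circles; it is neither computable nor is there any reason for it to take a special value on $O$. What is actually proved in \cite{BM} is that the \emph{pair} $\{O,M\}$, where $M$ is the boundary of the middle removed square, is the unique pair of disjoint peripheral circles at which the carpet modulus of the associated separating curve family is extremal; this forces $\xi(\{O,M\})=\{O,M\}$, and the possible swap of $O$ and $M$ is then excluded by comparing weak tangents at their corners (a quarter-plane carpet versus a three-quarter-plane carpet). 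Your weak-tangent idea for distinguishing corners from non-corners is indeed the right one, but it must be deployed at this earlier stage as well.

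The second and more serious gap is the ``induction on generations.'' A quasisymmetry controls only ratios of diameters, and only up to the distortion function $\eta$, so nothing prevents it a priori from sending a generation-$n$ peripheral square to one of a different generation; asserting that the discrete set of scales $3^{-n}$ is preserved is essentially assuming the rigidity you are trying to prove. In \cite{BM} this combinatorial endgame is avoided altogether: once $\xi$ is known to fix $O$ and to permute its four corners, one composes with an isometry so that the corners are fixed, and then a separate rigidity theorem for ``square carpets''---resting on the uniqueness of the extremal mass distribution for the carpet modulus of the family of curves joining two opposite sides of the unit square---shows that $\xi$ is the identity. Some substitute for that analytic input (or an honest proof of generation preservation, together with a careful argument that the induced permutations of cells at every level are simultaneously realized by a single element of the dihedral group) is indispensable; the density of corners alone will not close the argument.
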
 
For a  class of standard square carpets   a slightly weaker result is also known, namely that the group of quasisymmetries of such a carpet is   finite dihedral  \cite[Theorem~1.2]{BM}.

\subsection{Main techniques  and an outline of the proof of Theorem~\ref{thm:main2}}\label{SS:MT}
The methods used to prove the main result of this paper are  different from those employed to establish Theorems~\ref{T:BKMT1.2}--\ref{T:BMT1.1}.

The first key ingredient in the proof of Theorem~\ref{thm:main2}  is well known and  basic in complex dynamics, namely  the use of {\em conformal elevators}. Roughly speaking, this means that by using the dynamics of a given subhyperbolic rational map one can ``blow up"  a small disk centered in the Julia set to a definite size. We will need a  careful 
 analysis  to control analytic and geometric properties of such blow-ups (see Section~\ref{s:confelev}).  

We will apply this to  establish uniform  geometric properties of the peripheral circles of 
Sierpi\'nski carpets that arise as Julia sets of subhyperbolic rational 
maps. 

\begin{theorem}\label{thm:circgeom}
Let $f\: \hC\ra \hC$ be a subhyperbolic rational map whose Julia set $\Ju(f)$ is a Sierpi\'nski carpet. Then the peripheral circles of 
the Sierpi\'nski carpet $\Ju(f)$ are uniform quasicircles, they are uniformly relatively separated, and they occur on all locations and scales. 
Moreover, $\Ju(f)$ is a porous set, and in particular, has measure zero.
\end{theorem}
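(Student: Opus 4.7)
The plan is to combine the conformal elevator developed in Section~\ref{s:confelev} with a finite-model argument coming from the periodic Fatou components of $f$. Because $f$ is subhyperbolic and $\Ju(f)$ is a Sierpi\'nski carpet, the Fatou set is nonempty with only finitely many cycles of periodic Fatou components, each of which is an attracting or superattracting basin whose boundary is known to be a quasicircle (under subhyperbolicity the orbifold distance from the postcritical set to the boundary of the immediate basin is definite, and standard linearization and B\"ottcher coordinates produce a quasiconformal model). Let $\mathcal M$ denote this finite collection of \emph{model} peripheral circles. Every other peripheral circle is an iterated $f$-preimage of some element of $\mathcal M$, and by construction the elements of $\mathcal M$ have pairwise disjoint closures and definite pairwise relative distances.

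Given an arbitrary peripheral circle $C=\partial D$, pick $z\in C$ and $r$ comparable to $\diam C$. The conformal elevator of Section~\ref{s:confelev} produces an iterate $F=f^n$ defined on a ball $B(z,Kr)$ such that $F$ has uniformly bounded degree on $B(z,Kr)$, uniformly bounded distortion on $B(z,r)$, and $F(B(z,r))$ has diameter bounded below by a definite constant. After a further uniformly bounded number of iterations one may assume $F(D)$ is one of the model Fatou components in $\mathcal M$. Since the quasicircle property and the relative separation property are invariant under proper holomorphic maps of bounded degree and uniformly bounded distortion with uniform constants, both properties pull back from $\mathcal M$ to $C$ (and to any pair $C_1,C_2$ of peripheral circles, by choosing $r$ comparable to $\min(\diam C_1,\diam C_2)$) with constants depending only on $f$.

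The occurrence of peripheral circles on all locations and scales is established by running the elevator once more, in the same direction. Fix $z\in\Ju(f)$ and a small $r>0$, and blow up $B(z,r)$ to a ball of definite size via the elevator. In the blown-up picture a definite spherical neighborhood of $\Ju(f)$ is filled by Fatou components (the finitely many models together with their first few iterated preimages), so $F(B(z,r))$ necessarily meets some Fatou component of diameter bounded below by a universal constant; pulling this component back through $F$ produces a peripheral circle of diameter comparable to $r$ that meets a uniform neighborhood of $B(z,r)$. Porosity of $\Ju(f)$ is then an immediate corollary, since every ball $B(z,r)$ with $z\in\Ju(f)$ contains a Jordan disk in the complement of $\Ju(f)$ of comparable radius; the measure-zero conclusion follows from the standard fact that porous subsets of $\hC$ have spherical measure zero.

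The main technical obstacle is the distortion control of the elevator near critical points of $f$: $f$ is not uniformly expanding for the spherical metric near the Julia set, only for the orbifold metric, so one must carefully convert orbifold-metric estimates into spherical estimates while keeping the degree of $F$ uniformly bounded and the output diameter definite. This is precisely what the quantitative machinery of Section~\ref{s:confelev} is designed to deliver; once it is in hand, the four conclusions of the theorem follow by transporting the finite model family $\mathcal M$ through the elevator as described above.
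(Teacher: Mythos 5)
Your proposal takes a genuinely different route from the paper, but it has a gap at its core. The pivotal claim that ``the quasicircle property and the relative separation property are invariant under proper holomorphic maps of bounded degree and uniformly bounded distortion with uniform constants'' is false without further hypotheses, and the missing hypothesis is exactly the difficulty the theorem must overcome. The conformal elevator factors $f^n|_\Om = \varphi^{-1}\circ P_k\circ\psi$ with $\varphi,\psi$ conformal, so the only distortion that matters is that of the power map $P_k$. A $k$-th root does \emph{not} pull a quasicircle back to a quasicircle with a uniform constant: the constant degrades as the curve approaches the critical value $0$ (equivalently, as the model circle $J'$ approaches the postcritical point that serves as the center of $B'$). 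So you cannot simply transport quasicircle constants along the elevator from the finite model family $\mathcal M$ without first knowing that $\dist(J',\post(f))/\diam(J')$ is uniformly bounded below. This fact \emph{is} available from Lemma~\ref{L:PerC} and the finiteness of large peripheral circles, but your proposal neither invokes it nor signals that the argument depends on it. The same gap affects the relative-separation part, and moreover, composing ``a further uniformly bounded number of iterations'' to land on $\mathcal M$ requires re-checking all the distortion estimates for the composite iterate.

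The paper avoids this pullback lemma entirely by running a contradiction argument: if the quasicircle (resp.\ separation) constants were not uniform, one blows up a sequence of bad configurations, extracts a fixed model circle $J'$, and produces pairs of nearby points identified by $f^{n_k}$; Lemma~\ref{lem:dist}~(d) then forces $\dist(J',\post(f))\to 0$, contradicting Lemma~\ref{L:PerC}. This makes the postcritical-avoidance the engine of the proof rather than a hidden hypothesis, and it also dispenses with your other unsupported input, namely the claim that boundaries of periodic Fatou components of a subhyperbolic map are quasicircles — a nontrivial fact whose justification via orbifold metrics and B\"ottcher coordinates is roughly as involved as the statement you are trying to prove, and which the paper never needs to assume. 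Parts III (locations and scales) and IV (porosity) of your proposal match the paper's argument in substance and are essentially fine.
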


See  Sections~\ref{ss:qc} and \ref{s:geomper} for an explanation of  the terminology and for  the proof. 

The  fact that the Julia set of  subhyperbolic rational map has  measure zero is well known \cite{Ly}; moreover,  for  hyperbolic rational maps  the previous  theorem is fairly standard.

 A quasisymmetric  map $\xi$ on a Julia set as in the previous theorem can be extended to 
  a quasiconformal map on $\hC$ by  the following fact. 

\begin{theorem}\label{T:BoP5.1}
Suppose that $\{D_k:k\in \N\}$ is a  family of   Jordan regions in $\hC$
with pairwise
disjoint closures, and let 
$$\xi\: S = \hC\setminus\bigcup_{k\in \N} D_k \to \hC$$ be a
quasisymmetric embedding. If the Jordan curves $\dee D_k$, $k\in \N$, form a family 
of   uniform quasicircles,
then there exists a quasiconformal homeomorphism 
 $F\: \hC \to \hC$ whose restriction to $S$ is equal to $f$.
\end{theorem}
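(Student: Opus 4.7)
The strategy is to fill in each complementary Jordan region $D_k$ by a quasiconformal extension that agrees with $\xi$ on $\partial D_k$ and has dilatation bounded by a constant independent of $k$, and then verify that the resulting global homeomorphism of $\hC$ is itself quasiconformal.

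First I would identify the target Jordan regions. Since each $\partial D_k$ is a $K_0$-quasicircle (uniformly in $k$) and $\xi$ is quasisymmetric, the image $C_k:=\xi(\partial D_k)$ is a $K_1$-quasicircle with $K_1$ depending only on $K_0$ and the distortion function of $\xi$. Each $\partial D_k$ is a peripheral circle of $S$ and hence non-separating, so $\xi(S\setminus\partial D_k)$ is connected and lies in exactly one of the two Jordan components of $\hC\setminus C_k$; call the other component $D_k'$. Then $D_k'\cap \xi(S)=\emptyset$, the closures $\overline{D_k'}$ are pairwise disjoint, and quasisymmetry together with $\diam(\partial D_k)\to 0$ forces $\diam(C_k),\diam(D_k')\to 0$.

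Next, I would uniformly extend $\xi|_{\partial D_k}$ across each hole. By Ahlfors' characterization of quasidisks there are quasiconformal uniformizations $\varphi_k\:\D\to D_k$ and $\psi_k\:\D\to D_k'$ with dilatations controlled by $K_0$ and $K_1$, whose boundary extensions are quasisymmetries of $\partial\D$ with uniform distortion. The composition $\psi_k^{-1}\circ \xi\circ \varphi_k|_{\partial\D}$ is then a uniformly quasisymmetric self-map of $\partial\D$, so by the Ahlfors--Beurling extension it prolongs to a $K$-quasiconformal self-map of $\overline\D$ with $K$ independent of $k$; conjugating back gives a $K$-quasiconformal $F_k\:\overline{D_k}\to\overline{D_k'}$ agreeing with $\xi$ on $\partial D_k$. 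Setting $F=\xi$ on $S$ and $F=F_k$ on $\overline{D_k}$ produces a well-defined bijection $\hC\to\hC$ that is $K$-quasiconformal on each $D_k$. Injectivity follows from the injectivity of $\xi$ and the $F_k$ combined with pairwise disjointness of the $\overline{D_k'}$ and $D_k'\cap \xi(S)=\emptyset$, and continuity follows from $\diam(D_k),\diam(D_k')\to 0$, so $F$ is a homeomorphism of $\hC$.

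The main obstacle is promoting quasiconformality across the residual set $S$. I would attack this via the geometric (modulus) definition: the quasisymmetry of $\xi$ controls the distortion of conformal moduli of curve families whose traces lie in $S$, while the $K$-quasiconformality of each $F_k$ controls curve families supported in a single $D_k$. For a generic ring domain in $\hC$, the uniform quasicircle hypothesis on $\{\partial D_k\}$ allows a decomposition of admissible curve families into pieces of these two types at the cost of a uniform multiplicative constant; combining the resulting estimates yields the required uniform modulus distortion of $F$ on all of $\hC$. Alternatively, one can invoke a removability theorem for the set $S$ --- namely, that a set arising as the complement of a uniformly quasicircular family of Jordan regions is removable for quasiconformal homeomorphisms --- thereby reducing global quasiconformality to the already-established local statement on $\bigcup D_k$.
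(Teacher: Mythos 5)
The paper does not prove this statement itself; it cites \cite[Proposition~5.1]{Bo}, and the construction there (and in the closely related Lemma~\ref{lem:ext} of this paper) is exactly your first two steps: identify the target Jordan regions $D_k'$, uniformize both $D_k$ and $D_k'$ by uniformly quasiconformal maps using the quasidisk property, and fill in each hole by a uniformly quasiconformal extension of the boundary data (one small caveat: the conjugated boundary maps $\psi_k^{-1}\circ\xi\circ\varphi_k|_{\partial\D}$ are a priori only uniformly \emph{quasi-M\"obius} self-maps of $\partial\D$, not uniformly quasisymmetric, so one either normalizes by three points or uses a conformally natural extension such as Douady--Earle; this is routine).

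The genuine gap is in your last step, the passage from ``quasiconformal on each $D_k$ with uniform constant, quasisymmetric on $S$'' to ``quasiconformal on $\hC$.'' Neither of your two suggested routes works as stated. The removability claim is unjustified: the hypotheses of the theorem do not force $S$ to have measure zero (round Schottky sets of positive area satisfy them), and no removability theorem for such residual sets is available --- indeed the delicacy of exactly this issue is why \cite[Theorem~1.2]{BKM} needs the measure-zero hypothesis. The modulus-decomposition sketch is also not a proof: an admissible curve for a ring domain may meet infinitely many of the $D_k$ and $S$ in an arbitrarily interleaved way, modulus is only subadditive over decompositions of curve families, and you need matching upper and lower bounds for the image family; none of this is supplied by the uniform quasicircle hypothesis alone. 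The argument that actually closes this step (the last part of the proof of \cite[Proposition~5.1]{Bo}) is different in kind: one shows directly that the glued homeomorphism $F$ is globally quasisymmetric (equivalently quasi-M\"obius) on $\hC$ by a three-point argument --- given a triple of points, each point lying in some $D_k$ is compared to a point of $\partial D_k\subset S$ at bounded multiplicative cost, using the uniform quasisymmetry of the filling maps $F_k$ and the quasidisk geometry of the $D_k$, and the quasisymmetry of $\xi$ on $S$ then controls the cross-ratio distortion. A quasisymmetric homeomorphism of $\hC$ is quasiconformal, which finishes the proof without any appeal to removability or to modulus estimates across $S$. You should replace your final paragraph with an argument of this type.
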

This follows from \cite[Proposition~5.1]{Bo}, where a stronger, quantitative version is formulated.  

Now suppose  that, as in Theorem~\ref{thm:main2},   
  we have  a given quasisymmetry $\xi$ of the Julia set $\Ju(f)$ of a postcritically-finite rational map $f$ onto the Julia set of another such map $g$. Since every postcritically-finite map is subhyperbolic, we can apply  Theorems~\ref{thm:circgeom} and \ref{T:BoP5.1} and extend $\xi$ (non-uniquely) to a quasiconformal map on $\hC$,  also denoted by $\xi$. We then use conformal elevators to produce a sequence $\{h_k\}$ of uniformly quasiregular maps defined in a fixed disk centered at a suitably chosen point in  $\Ju(f)$. The map $h_k$ is a local symmetry of $\Ju(f)$ and  is given by 
$$
h_k=\xi^{-1}\circ g^{m_k}\circ\xi\circ f^{-n_k}
$$
for appropriate choices of $n_k,m_k\in\N$ and  branches of $f^{-n_k}$.
One can also write $h_k$ in the form
$h_k=g_\xi^{m_k}\circ f^{-n_k}$, where $g_\xi=\xi^{-1}\circ g\circ\xi$. This has the advantage that the maps involved are defined   on  $\Ju(f)$. 
Standard compactness arguments imply that the sequence $\{h_k\}$ has a subsequence that converges locally uniformly to a non-constant quasiconformal map $h$ on a disk centered at 
a point in $\Ju(f)$.  

 Now one  wants to show that the sequence $\{h_k\}$ stabilizes and so $h_k=h=h_{k+1}$ for large $k$. From this one can  derive an equation of the form 
\begin{equation}\label{eq:funddynrel}
 g^{m'}\circ \xi = g^m\circ \xi \circ f^n 
\end{equation}
on $\Ju(f)$ for some integers $m,m',n\in \N$. This  relates $\xi$ to the dynamics of $f$ and $g$.

When $\xi$ is a M\"obius transformation, this approach 
 goes back to Levin~\cite{Le} (see also \cite{Le1} and \cite{LP}).    
 In this case, the maps involved are analytic, which played a crucial role  
  in establishing that the sequence 
 $\{h_k\}$ stabilizes. 

In our case, the maps are not analytic, so we need to invoke a different techniques, 
namely  rigidity results for  Schottky maps on relative Schottky sets (see Section~\ref{s:Schottky}) established by  the third author.  
Our situation can be reduced to the Schottky setting 
 due  to  the following quasisymmetric uniformization result:

\begin{theorem}\label{T:BoC1.2}\cite[Corollary~1.2]{Bo}
Suppose that $T\subseteq\hC$ is a Sier\-pi\'n\-ski carpet whose peripheral circles
are uniform quasicircles that are uniformly relatively separated . Then $T$ can be mapped to a
round Sier\-pi\'n\-ski carpet $S\sub \hC$
by a quasisymmetric homeomorphism $\beta\: T\to S$.
\end{theorem}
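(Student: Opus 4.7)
The plan is to combine the Koebe circle domain uniformization for countably connected planar domains with a careful boundary extension and modulus analysis. Let $\Omega=\hC\setminus T=\bigcup_{k}D_k$ be the complementary domain of the carpet; this is a countably connected planar domain, and by the He--Schramm extension of Koebe's classical theorem there is a conformal homeomorphism $\varphi\colon\Omega\to\Omega'$ onto a \emph{circle domain} $\Omega'=\hC\setminus S$, i.e., each connected component of $\hC\setminus\Omega'$ is either a closed round disk or a single point. The map $\varphi$ is determined up to post-composition with a M\"obius transformation, and the candidate $\beta$ will be produced as a boundary extension of $\varphi$.

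Next, I would extend $\varphi$ to a homeomorphism $\beta\colon T\to S$ and rule out point components of $S$. Since each $\partial D_k$ is a uniform quasicircle, $\partial\Omega$ is locally connected along each peripheral circle, so Carath\'eodory's prime end theorem yields a continuous extension of $\varphi$ across $\partial D_k$. A modulus argument then gives $\diam\varphi(D_k)\to 0$: uniform relative separation implies that each $D_k$ is surrounded in $\Omega$ by an annulus of modulus bounded below independently of $k$, and since $\varphi$ preserves modulus while $\hC$ has bounded spherical diameter, $\diam\varphi(D_k)$ must shrink. This also prevents components of $S$ from collapsing to points, so $S$ is genuinely a round Sierpi\'nski carpet. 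Combined with $\diam D_k\to 0$, the local extensions glue to a homeomorphism $\beta\colon T\to S$.

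The main obstacle is to verify that $\beta$ is globally quasisymmetric. My plan is to establish the three-point condition
\begin{equation*}
\frac{|\beta(x)-\beta(y)|}{|\beta(x)-\beta(z)|}\le \eta\!\left(\frac{|x-y|}{|x-z|}\right)
\end{equation*}
for all triples $x,y,z\in T$, with a distortion function $\eta$ depending only on the quasicircle and separation constants. For each such triple, the ratio $|x-y|/|x-z|$ can be detected, up to multiplicative constants, by the conformal modulus of a family of curves in $\Omega$ separating peripheral circles close to $x,y,z$; the uniform quasicircle and uniform relative separation hypotheses supply, at every location and scale, a sufficient reservoir of peripheral circles to make these modulus estimates quantitative and uniform. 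Since $\varphi$ is conformal, the analogous curve family in $\Omega'$ has identical modulus, and the round geometry of $S$ converts this modulus back into a Euclidean distance ratio. The technical heart of the proof is carrying out this modulus-to-distance translation uniformly in position and scale, without any loss of control as the points $x,y,z$ approach a common peripheral circle or as their mutual distances shrink; this is precisely the step that requires the full strength of the two geometric hypotheses on $T$ and accounts for the quantitative form of the result in Bonk's paper.
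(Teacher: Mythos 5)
The paper does not prove this statement; it is quoted verbatim from \cite[Corollary~1.2]{Bo}, so your proposal has to be measured against Bonk's actual argument. Your plan breaks down at the very first step. The set $\hC\setminus T=\bigcup_k D_k$ is \emph{not} a countably connected domain: it is a disjoint union of countably many Jordan regions, hence a disconnected open set, and there is no connected open set here to which the He--Schramm circle domain uniformization could be applied. (A ``countably connected domain'' is a \emph{connected} open set whose complement has countably many components; the complement of a Sierpi\'nski carpet is exactly the opposite configuration, and the carpet itself has empty interior, so there is no candidate domain on which a single conformal map $\varphi$ could be defined.) Everything downstream of this --- the Carath\'eodory extension, the claim that $\varphi$ preserves modulus, the gluing of local extensions --- therefore has no object to act on. The correct version of this idea, which is what Bonk does, is to remove only finitely many of the closures $\overline D_{1},\dots,\overline D_{k}$ from $\hC$, obtaining a genuinely finitely connected domain, apply Koebe's finitely connected circle domain theorem to get conformal maps $f_k$, prove that the $f_k$ (suitably normalized and extended to the sphere) are \emph{uniformly} quasiconformal with constants depending only on the quasicircle and separation data, and pass to a limit as $k\to\infty$; the quasisymmetric map $\beta$ is that limit restricted to $T$, not a boundary extension of any single conformal map.

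A second, independent problem is your use of ordinary conformal modulus. An annulus separating $D_k$ from the other complementary regions and lying in the complement of the carpet does not exist: the carpet is connected and the small peripheral disks accumulate everywhere, so any topological annulus around $D_k$ meets $T$ and infinitely many other $D_l$. This is precisely why Bonk's proof is built on Schramm's \emph{transboundary} modulus, which allows curves to pass through the complementary components at a controlled cost; the uniform quasicircle and uniform relative separation hypotheses enter through transboundary modulus estimates, not through classical ring moduli. The ``technical heart'' you defer at the end --- converting modulus data into the three-point quasisymmetry condition uniformly in location and scale --- is essentially the entire content of Bonk's paper, and it cannot be carried out with the ordinary modulus of curve families in $\hC\setminus T$ as you describe. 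So the proposal is not a proof, nor a correct outline of one: the uniformizing map is constructed by the wrong mechanism, and the key quantitative tool is missing.
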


Here we say that a Sierpi\'nski carpet $S$ in $\hC$ is {\em round} if all of its peripheral circles are geometric circles; in this case $S$ is a Schottky set. According to Theorem~\ref{T:BoP5.1}, we can again extend the map $\beta$  to a quasiconformal map on the sphere.


If we apply the previous theorem to $T=\Ju(f)$, then 
it follows from Theorem~\ref{T:Me2T1.2} that each map $h_k$ is conjugate by $\beta$ to a Schottky map  defined in a fixed relatively open  $S\cap V\sub S$. This is a  conformal map on $S\cap V$ in the sense of  Theorem~\ref{T:Me2T1.2}, with a continuous derivative. 
After conjugation by $\beta$ we can write this map  in the form $h_k=g_\xi^{m_k}\circ f^{-n_k}$ as well, 
where, by abuse of notation, we do not distinguish between the original maps and their conjugates by $\beta$. The following result implies that the sequence $\{h_k\}$ of these conjugate maps stabilizes.  

\begin{theorem}\label{T:Me3T5.3}\cite[Theorem~5.2]{Me3}
Let $S$ be a locally porous relative Schottky set in a region $D\subseteq\C$,  let $p\in S$, and let $U$ be an open neighborhood of $p$ such that $S\cap U$ is connected. Suppose that there exists a Schottky map $u\: S\cap U\to S$ with $u(p)=p$ and $u'(p)\neq 1$. 

Let $\{h_k\}_{k\in\N}$ be a sequence of Schottky maps $h_k\: S\cap U\to S$. 
If each map $h_k$ 
is a homeomorphism onto its image and if the sequence $\{h_k\}$ converges locally uniformly to a homeomorphism $h$, then there exists $N \in \N$ such that $h_k =h$ in $S\cap U$ for all $k\geq N$.
\end{theorem}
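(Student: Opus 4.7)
My plan is a blow-up argument at $p$, using the Schottky self-map $u$ as a dynamical zoom to contradict any putative failure of stabilization. First, I would reduce to the case $h = \id$. Since $h$ is the local-uniform limit of Schottky maps that are homeomorphisms, the bi-Lipschitz and derivative control in Theorem~\ref{T:Me2T1.2} make $h$ itself a Schottky map near $p$ with $h'(p) \neq 0$, so $h^{-1}$ is a Schottky map on a neighborhood of $h(p)$. Set $\varphi_k := h^{-1} \circ h_k$; each $\varphi_k$ is then a Schottky map on a slightly smaller neighborhood of $p$ in $S$, $\varphi_k \to \id$ locally uniformly, and the problem reduces to showing $\varphi_k \equiv \id$ near $p$ for all large $k$.

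Assume for contradiction that, along a subsequence, $\varphi_k \not\equiv \id$ for every $k$. After replacing $u$ by $u^{-1}$ (also Schottky near $p$ by Theorem~\ref{T:Me2T1.2}) we may assume $|u'(p)| > 1$. Choose $q_k \in S \cap U$ with $\varepsilon_k := |\varphi_k(q_k) - q_k| > 0$; note $\varepsilon_k \to 0$. I would form the rescaled maps
$$
\widetilde\varphi_k := u^{n_k} \circ \varphi_k \circ u^{-n_k},
$$
where $n_k \to \infty$ is chosen so that $|u'(p)|^{n_k}\varepsilon_k$ is of fixed size of order one. The uniform bi-Lipschitz bounds from Theorem~\ref{T:Me2T1.2} yield precompactness, and a subsequential limit $\widetilde\varphi$ is a Schottky map on a neighborhood of $p$ with $\widetilde\varphi(p) = p$. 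By the chain rule the conjugation does not alter the derivative at $p$, so $\widetilde\varphi_k'(p) = \varphi_k'(p) \to 1$, giving $\widetilde\varphi'(p) = 1$. By the choice of rescaling, $\widetilde\varphi \neq \id$ on a definite neighborhood of $p$.

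The heart of the matter, and the step I expect to be the main obstacle, is the final rigidity claim: on a locally porous relative Schottky set, no nontrivial Schottky map can fix $p$ with derivative one at $p$. This is precisely where local porosity enters — it forces peripheral disks of $S$ of comparable diameter to exist at every scale and in every direction near $p$, and such disks must be matched bijectively by $\widetilde\varphi$. Combined with the triviality of the first-order jet, I would iterate the same rescaling trick applied to $\widetilde\varphi$ itself, or run a discreteness argument on the combinatorics of the nearby peripheral disks, to deduce that $\widetilde\varphi$ must be $\id$ on a neighborhood of $p$ in $S$, contradicting its construction. Making this ``jet-rigidity of Schottky maps'' precise — bridging the conformal infinitesimal data at $p$ with the discrete peripheral-disk combinatorics supplied by porosity — is the delicate analytic-combinatorial core of the argument, and once it is in place the rest of the proof is a routine unwinding of the rescaling.
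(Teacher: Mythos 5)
First, a point of reference: the paper does not prove this statement at all --- it is quoted from \cite[Theorem~5.2]{Me3} and used as a black box --- so there is no in-paper argument to compare yours against, and I can only assess the proposal on its own terms. Its most serious gap is the step ``after replacing $u$ by $u^{-1}$ we may assume $|u'(p)|>1$.'' The hypothesis is only $u'(p)\neq 1$, which permits $|u'(p)|=1$ with $u'(p)\neq 1$ (e.g.\ a round carpet with a finite-order rotational symmetry about a point $p$ of the carpet yields a Schottky map $u$ fixing $p$ with $u'(p)$ a nontrivial root of unity). In that case neither $u$ nor $u^{-1}$ expands at $p$, $|u'(p)|^{n_k}\varepsilon_k\to 0$ for every choice of $n_k$, and the entire renormalization scheme has nothing to zoom with. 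The hypothesis $u'(p)\neq 1$ is meant to be exploited algebraically, in tandem with the jet-rigidity statement that you flag as ``the delicate core still to be bridged'': that statement is exactly Theorem~\ref{thm:dervuniq} above (i.e.\ \cite[Theorem~4.1]{Me3}), already available, so it should be an input rather than the open end of your argument.

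Even granting $|u'(p)|>1$, two further steps fail as written. (i) Calibrating $n_k$ by $|u'(p)|^{n_k}\varepsilon_k\asymp 1$, where $\varepsilon_k$ is a displacement realized at some $q_k\in S\cap U$, does not make the blow-up limit nontrivial: $q_k$ need not lie in the ball of radius $\approx |u'(p)|^{-n_k}$ about $p$ that $u^{n_k}$ rescales, and $\varphi_k$ may move points of that tiny ball by far less than $\varepsilon_k$, so $\widetilde\varphi_k$ can perfectly well converge to the identity; one must measure the displacement on the shrinking balls themselves and then rule out that all nontriviality escapes to larger scales. Relatedly, since $\varphi_k(p)$ need not equal $p$, the chain rule gives $\widetilde\varphi_k'(p)=(u^{n_k})'(\varphi_k(p))\,(u^{-n_k})'(p)\,\varphi_k'(p)$, and the product of the first two factors is only bounded, not close to $1$, at the scale you chose --- so $\widetilde\varphi'(p)=1$ does not follow. (ii) The appeals to Theorem~\ref{T:Me2T1.2} for precompactness of $\{\widetilde\varphi_k\}$, for the limit being a Schottky map, and for $\varphi_k'(p)\to 1$ are unjustified: that theorem controls each individual map with constants depending on the map, not uniformly over a sequence. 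That locally uniform limits of Schottky maps are Schottky maps with convergent derivatives is itself a substantive theorem of \cite{Me3} and is, in essence, the machinery on which the actual proof of the present statement rests.
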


 After some algebraic manipulations, the relation  $h_k=h_{k+1}$ for large enough $k$
 gives the  equation 
 \begin{equation}\label{eq:dynrel2}
 g_\xi^{m'} = g_\xi^m\circ f^n 
\end{equation}
in a relatively  open set $S\cap U$,
for some integers $m,m',n\in \N$.  
The following result  promotes  the validity of this equation to the whole set $S$.

 
 \begin{theorem}\label{C:Me3C4.2}\cite[Corollary~4.2]{Me3}
 Let $S$ be a locally porous relative Schottky set in $D \subseteq \C$, and let $U \subset D$ be an open set such that $S \cap U$ is connected. Let $u,v\: S \cap U \to S$ be Schottky maps, and 
$E = \{p \in S \cap U \: u(p) = v(p)\}$. If $E$ has an accumulation point in $U$, then $E = S\cap U$ and so $u=v$.  
 \end{theorem}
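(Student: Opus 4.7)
The plan is to show that $E$ is both relatively closed and relatively open in the connected set $S\cap U$; since the accumulation-point hypothesis forces $E$ to be nonempty (the accumulation point itself lies in $E$ by continuity of $u$ and $v$), this will give $E=S\cap U$. Closedness is immediate from continuity.

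For openness, I would prove the stronger statement that any accumulation point $p_0\in E$ has a relative neighborhood in $S$ on which $u\equiv v$. The first step is to match derivatives: by Theorem~\ref{T:Me2T1.2}, the Schottky derivatives $u'(p_0)$ and $v'(p_0)$ exist as limits through $S$ and are nonzero. Restricting these defining limits to a sequence $\{p_n\}\subset E$ with $p_n\to p_0$, and using $u(p_n)=v(p_n)$ together with $u(p_0)=v(p_0)$, one immediately obtains $u'(p_0)=v'(p_0)$. The local bi-Lipschitz property in Theorem~\ref{T:Me2T1.2} then provides a local Schottky inverse of $v$ near $v(p_0)$; setting $w=v^{-1}\circ u$ produces a Schottky map on a relative neighborhood $N$ of $p_0$ in $S$ with $w(p_0)=p_0$, $w'(p_0)=1$, and $w$ fixing every point of $E\cap N$.

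The main obstacle will be to conclude from this data that $w\equiv\id$ near $p_0$, which is a form of unique continuation for Schottky maps. Since such maps are merely $C^1$, the classical Taylor-expansion identity-principle argument is unavailable, and the rigidity must come from the Schottky structure itself. My plan is a blow-up/normal-family argument leveraging local porosity: rescale $w$ at scales $r_n\to 0$ to produce Schottky maps $w_n(z)=r_n^{-1}(w(p_0+r_n z)-p_0)$ defined on the rescaled sets $S_n=r_n^{-1}(S-p_0)$. Local porosity is precisely what is needed to extract a Hausdorff-convergent subsequence of the $S_n$ with nondegenerate limit, and uniform conformal distortion control on Schottky maps supplies normality of $\{w_n\}$. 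A subsequential limit $w_\infty$ is then a Schottky map with $w_\infty(0)=0$, $w_\infty'(0)=1$, and fixing the rescaled accumulating sequence; rigidity of Schottky maps in the spirit of Theorem~\ref{T:Me3T5.3}, combined with a self-similarity argument that propagates identity from the limiting scale back to $w$, should force $w\equiv\id$ on a relative neighborhood of $p_0$, and hence $u\equiv v$ there. Once this local openness is in hand, the set of accumulation points of $E$ is open, closed, and nonempty in $S\cap U$; connectedness and closedness of $E$ then yield $E=S\cap U$.
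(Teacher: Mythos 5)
Your reduction is the right one, and it is essentially how \cite[Corollary~4.2]{Me3} is deduced from \cite[Theorem~4.1]{Me3}: the accumulation point $p_0$ lies in $E$ by continuity; the derivative matching $u'(p_0)=v'(p_0)$ follows by evaluating the defining limit \eqref{eq:confmean} along the sequence in $E$; and $w=v^{-1}\circ u$ is a Schottky map on a relative neighborhood of $p_0$ with $w(p_0)=p_0$ and $w'(p_0)=1$ (granting the routine facts, established in \cite{Me3}, that local inverses and compositions of Schottky maps are again Schottky maps, and choosing the neighborhood $W$ as in the proof of Lemma~\ref{L:Schmaps} so that $S\cap W$ is connected -- note that Theorem~\ref{T:Me2T1.2} is not the right reference here, since $u$ and $v$ are Schottky maps by hypothesis rather than quasisymmetries). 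The statement you then need -- that a Schottky map of a locally porous relative Schottky set fixing a point with derivative $1$ is the identity -- is precisely Theorem~\ref{thm:dervuniq} (that is, \cite[Theorem~4.1]{Me3}), which the paper states and which you should simply invoke; with it, $u=v$ on $S\cap W$, and your open--closed argument on the connected set $S\cap U$ finishes the proof.

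The genuine gap is in your proposed proof of that last rigidity step. The blow-up argument cannot work as described: since $w$ is differentiable at $p_0$ with $w(p_0)=p_0$ and $w'(p_0)=1$, the rescalings $w_n(z)=r_n^{-1}\bigl(w(p_0+r_n z)-p_0\bigr)$ converge to the identity for \emph{any} such map, Schottky or not, so the conclusion $w_\infty=\id$ carries no information. All of the difficulty is hidden in the phrase ``a self-similarity argument that propagates identity from the limiting scale back to $w$'': there is no general mechanism for transferring information from a blow-up limit back to positive scales (an arbitrary $C^1$ diffeomorphism with a fixed point of derivative $1$ defeats any such principle). The actual proof in \cite{Me3} uses the Schottky structure at positive scales in an essential way -- a Schottky map carries peripheral circles to peripheral circles via restrictions of M\"obius transformations, and local porosity supplies peripheral circles of comparable size at every location and scale to pin the map down -- and none of this is recovered by a normal-family scheme. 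Replace that paragraph with a citation of Theorem~\ref{thm:dervuniq} and the argument is complete.
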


If we conjugate  back by $\beta^{-1}$ to the original maps,  then we can conclude that  \eqref{eq:dynrel2},
and hence \eqref{eq:funddynrel}, is  valid on the whole Julia set $\Ju(f)$. 

In the final part of the argument, we  analyze  functional equation  \eqref{eq:funddynrel}
on  $\Ju(f)$.
 Lemma~\ref{L:Rot} in combination with  auxiliary results established in 
 Section~\ref{ss:compldyn} allow us to conclude that $\xi$ has a conformal extension into each periodic component of the Fatou set of $f$. Using the  description of the dynamics on the Fatou set in the postcritically-finite case (Lemma~\ref{lem:FatouDyn}), we can actually produce  a conformal extension of $\xi$ into every Fatou component of $f$ by a lifting procedure (see Lemma~\ref{lem:lifting}). These extensions piece  together  a global quasiconformal map, again denoted by $\xi$,  that is conformal on the Fatou set of $f$. Since the Julia set $\Ju(f)$ has measure zero, it follows that $\xi$ is $1$-quasiconformal and hence a M\"obius transformation
 (see Lemma~\ref{lem:ext}).

\medskip\noindent 
\textbf{Acknowledgments.} The authors would like to thank A.~Eremenko and  A.~Hinkkanen for providing useful references.

\section{Quasiconformal maps and related concepts}\label{ss:qc}
\no
Throughout the paper we assume that the reader is familiar with basic notions and facts from the theory of quasiconformal maps and complex dynamics. We will review some relevant definitions and statements related to these topics in this and the following sections.

We will almost exclusively deal with the  complex plane $\C$ equipped with the Euclidean metric (the distance between $z$ and $w$ denoted by $|z-w|$), or the Riemann sphere $\hC$ 
equipped with the chordal metric $\sigma$. We denote by $\D=\{z\in \D: |z|<1\} $ the open unit  disk in $\C$. 
By default a set $M\sub \hC$ carries (the restriction of) the chordal metric $\sigma$,  but we will usually specify the relevant metric in a given context.

 With an  underlying space $X$ and a metric on $X$ understood, we denote by $B(p,r)$  the open ball of radius $r>0$ centered at $p\in X$, by 
 $\diam(M)$ the diameter, and by $\dist(M,N)$ the distance of sets $M,N\sub X$.
 The cardinality of set $M$ is $\#M\in \N_0\cup\{\infty\}$, and $\text{id}_M$ the identity map on $M$. If $f\:X\ra Y$ is a map between sets 
 $X$ and $Y$ and $A\sub X$, then we denote by $f|_A\:A\ra Y$  the restriction of $f$ to $A$.  
 
  We will now discuss quasiconformal and related maps. For general background on this topic we refer to \cite{Ri, AIM, Va, He}. 
  
 A non-constant  continuous map $f \: U \to \C$
on a  region $U\sub \C$ is called \emph{quasiregular} if $f$ is   in the Sobolev space $W_{\rm loc}^{1,2}$
and if there exists a constant $K\ge 1$ such that the (formal) Jacobi   matrix $Df$
satisfies
\begin{equation}\label{eq:defqr}
||Df(z)||^2\leq K\det(Df(z))
\end{equation}
for almost every $z \in U$.  The condition that $f\in W_{\rm loc}^{1,2}$ means that the first distributional partial derivatives of $f$ are locally in the Lebesgue space $L^2$. 
This definition requires only local coordinates and hence the notion of a quasiregularity 
 can be extended to maps $f\: U\ra \hC$ on regions $U\sub \hC$. If $f$ is a homeomorphism onto its image in addition, then $f$ is called  a {\em quasiconformal} map.  The map $f$ is called {\em locally quasiconformal} if for every point $p\in U$ there exists a region $V$ with $p\in V\sub U$  such that $f|_V$ is quasiconformal.

Each quasiregular map $f\: U\to \hC$ is a {\em branched covering map}. This means that $f$  is an open map and  the preimage $f^{-1}(q)$ of each point $q\in \hC$ is 
a discrete subset of $U$. A point $p\in U$, near which the quasiregular map  $f$ is not a local homeomorphism, is called a {\em critical point} of $f$. These points are isolated in $U$, and accordingly, the set 
 $\crit(f)$ of all critical points of $f$ is a discrete and relatively closed subset of $U$.
  One can easily derive these statements from the fact that every quasiregular map $f\: U\ra \hC$ on a region $U\sub \hC$   can  be represented locally
 in the form 
$f=g\circ \varphi$, where $g$ is a holomorphic map and $\varphi$ is quasiconformal
\cite[p.~180, Corollary 5.5.3]{AIM}.



Let $(X, d_X)$ and $(Y,d_Y)$ be metric spaces and 
let $f\: X\to Y$ be a homeomorphism. 
The map $f$ is called {\em quasisymmetric} if there exists a homeomorphism $\eta\:
[0,\infty)\to[0,\infty)$ such that 
\begin{equation}\label{eq:defqs}
\frac{d_Y(f(u),f(v))}{d_Y(f(u),f(w))}\leq\eta
\bigg(\frac{d_X(u,v)}{d_X(u,w)}\bigg),
\end{equation}
for every triple of distinct points $u,v,w\in X$.

Suppose  $U$ and $V$ are  subregions of $\hC$. Then 
every orientation-preserving quasisymmetric homeomorphism $f\: U\ra V$ is quasiconformal. Conversely, every quasiconformal homeomorphism $f\: U\ra V$ is {\em locally quasisymmetric}, i.e., 
for every compact set $M\sub U$,  
the restriction $f|_M\: M\ra f(M)$ is a quasisymmetry (see \cite[p.~58, Theorem 3.4.1 and p.~71, Theorem~3.6.2]{AIM}).  

Often it is important to keep track of the quantitative information  that appears in the definition of quasiregular maps as in \eqref{eq:defqr} or  quasisymmetric  maps as in \eqref{eq:defqs}.
Then we speak of a $K$-quasiregular map, or an $\eta$-quasisymmetry, etc.

A Jordan curve $J\sub \hC$ is called a {\em quasicircle}  if there exists a quasisymmetry 
$f\: \partial \D\ra J$. This is equivalent to the requirement that there exists a constant 
$L\ge 1$ such that 
\begin{equation}\label{eq:qarccond}
 \diam(\alpha)\le L\sigma(u,v), 
 \end{equation} whenever 
$u,v\in \N$, $u\ne v$,  and $\alpha$ is the smaller subarc of $J$ with endpoints $u,v$. 

  If $\{J_k: k\in \N\}$  is a family of quasicircles , then we say that it consists of {\em uniform quasicircles} if condition 
\eqref{eq:qarccond} is true for some constant  $L\ge 1$ independent of $k\in \N$.  
The family $\{J_k: k\in \N\}$ is said to be {\em uniformly relatively separated} if there exists 
a constant $c>0$ such that 
$$ \frac{\dist(J_k, J_l)}{\min\{ \diam(J_k), \diam(J_l)\}}\ge c$$ 
for all $k,l\in \N$, $k\ne l$.

We conclude this section with  an extension result that is needed  in the proof of  
Theorem~\ref{thm:main2}. 
  
 \begin{lemma}\label{lem:ext} Let $S\sub \hC$  be a Sierpi\'nski carpet written in the form 
 $S =\hC\setminus\bigcup_{k\in \N}D_k$ with pairwise disjoint  Jordan regions $D_k\sub \hC$, 
 and suppose that the peripheral circles $\partial D_k$, $k\in \N$,  of $S$ are uniform quasicircles.
Let $\xi\: S\ra \hC$ be an orientation-preserving quasisymmetric embedding of $S$ and suppose that each restriction 
$\xi|_{\partial D_k}\: \partial D_k\ra \hC$, $k\in \N$, extends to an embedding  $\xi_k\:\overline D_k\ra \hC$ that is conformal on $D_k$. Then $\xi$ has a unique quasiconformal extension 
$\widetilde \xi\:\hC\ra \hC$   that is conformal on $\hC\setminus S$. 

Moreover, if $S$ has measure zero, then   $\widetilde \xi$ is a M\"obius transformation.  
\end{lemma}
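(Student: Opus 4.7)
The plan is to extend $\xi$ first to a quasiconformal homeomorphism $F\:\hC\to\hC$ via Theorem~\ref{T:BoP5.1}, then to modify $F$ inside each $D_k$ by replacing it with the conformal extension $\xi_k$, and finally to verify that the resulting glued map $\widetilde\xi$ is quasiconformal on $\hC$ using a Beltrami coefficient argument. The measure-zero conclusion will then follow from Weyl's lemma.

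By Theorem~\ref{T:BoP5.1}, $\xi$ admits an orientation-preserving quasiconformal extension $F\:\hC\to\hC$. For each $k$, the Jordan curve $F(\partial D_k)=\xi_k(\partial D_k)$ bounds two Jordan regions in $\hC$; since $F$ and $\xi_k$ agree on $\partial D_k$ as orientation-preserving homeomorphisms, $F(D_k)=\xi_k(D_k)$. Define
$$
\widetilde\xi(z):=\begin{cases} \xi_k(z) & z\in\overline{D_k}, \\ F(z) & z\in S, \end{cases}
$$
which is well-defined because $\xi_k=F=\xi$ on $\partial D_k\sub S$. Injectivity follows from the fact that the sets $\xi_k(\overline{D_k})=F(\overline{D_k})$ are pairwise disjoint and disjoint from $F(S\setminus\bigcup_k\partial D_k)$. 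Continuity at a point $z\in S$ approached by a sequence $z_n\in D_{k_n}$ is handled by choosing $w_n\in\partial D_{k_n}$ with $w_n\to z$: then $F(w_n)\to F(z)$, while $|\xi_{k_n}(z_n)-\xi_{k_n}(w_n)|\le\diam F(D_{k_n})\to 0$ since $F$ is uniformly continuous and $\diam D_{k_n}\to 0$. Hence $\widetilde\xi$ is a homeomorphism of $\hC$.

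For quasiconformality, the candidate Beltrami coefficient is $\mu:=\mu_F\cdot\chi_S$, with $\|\mu\|_\infty<1$. The verification proceeds by showing $\widetilde\xi\in W^{1,2}_{\mathrm{loc}}(\hC)$ with $\partial_{\bar z}\widetilde\xi=\mu\,\partial_z\widetilde\xi$ almost everywhere. The key observation is that $\widetilde\xi-F$ vanishes on $S$ and equals the continuous function $\xi_k-F|_{\overline{D_k}}$ on each $\overline{D_k}$, with zero trace on the quasicircle $\partial D_k$; hence each such piece, extended by zero outside $\overline{D_k}$, lies in $W^{1,2}(\hC)$. Summing over $k$ gives $\widetilde\xi-F\in W^{1,2}(\hC)$, because $\sum_k\int_{D_k}|DF|^2\le\int_\hC|DF|^2<\infty$ and $\sum_k\int_{D_k}|D\xi_k|^2=2\sum_k\area F(D_k)<\infty$. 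On $\bigcup_k D_k$ the Beltrami equation holds trivially since $\xi_k$ is conformal, and on $S$ the distributional derivatives of $\widetilde\xi$ coincide with those of $F$, yielding $\partial_{\bar z}\widetilde\xi=\mu_F\,\partial_zF=\mu\,\partial_z\widetilde\xi$. Consequently $\widetilde\xi$ is $K$-quasiconformal on $\hC$, where $K$ is the dilatation of $F$, and conformal on $\hC\setminus S$.

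Uniqueness is immediate: any other extension with the stated properties must agree with $\widetilde\xi$ on $S$, and on each $D_k$ it is a conformal map with the same boundary values as $\xi_k$, hence coincides with $\xi_k$. For the final assertion, if $S$ has measure zero then $\mu_{\widetilde\xi}=0$ almost everywhere, so $\widetilde\xi$ is $1$-quasiconformal; by Weyl's lemma it is a conformal automorphism of $\hC$, i.e., a fractional linear transformation, and hence a M\"obius transformation in the sense of this paper. The main technical obstacle is the $W^{1,2}_{\mathrm{loc}}$ step, where one must exploit the uniform quasicircle hypothesis to extend the zero-trace functions $\xi_k-F|_{D_k}$ across $\partial D_k$ with controllable Sobolev norms, and then verify summability using the global energy of $F$ together with the bound on $\sum_k\area F(D_k)$.
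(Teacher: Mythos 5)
Your proof is correct, and for the hardest step -- quasiconformality of the glued map -- it takes a genuinely different route from the paper. The paper does not first extend $\xi$ by Theorem~\ref{T:BoP5.1}; instead it pieces together $\xi$ with the (unique) conformal fillings $\xi_k$ directly, and then establishes quasiconformality of the resulting homeomorphism through the quasi-M\"obius machinery of \cite{Bo}: the uniform quasicircle hypothesis makes the $D_k$ and their images uniform quasidisks, whence the $\xi_k$ are uniformly quasi-M\"obius, then uniformly quasisymmetric, and the gluing criterion from the proof of \cite[Proposition~5.1]{Bo} applies. You instead use Theorem~\ref{T:BoP5.1} as a black box to get a global quasiconformal $F$ and then perform a Beltrami surgery, verifying the analytic definition of quasiconformality via the zero-trace Sobolev gluing $\widetilde\xi-F=\sum_k u_k$ with disjoint supports and summable energies. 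Your route is more self-contained analytically and yields the dilatation bound $K(\widetilde\xi)\le K(F)$ essentially for free (the conformal pieces contribute nothing to $\mu$), whereas the paper's route avoids invoking the full extension theorem but must establish the \emph{uniform} quasisymmetry of the family $\{\xi_k\}$ by hand. One remark: your closing worry that the uniform quasicircle hypothesis is needed "to extend the zero-trace functions across $\partial D_k$ with controllable Sobolev norms" is unfounded -- for any bounded open set $D$, a function in $W^{1,2}(D)\cap C(\overline D)$ vanishing on $\partial D$ lies in $W^{1,2}_0(D)$ (truncate at level $\eps$ to get compact support), so its zero-extension is in $W^{1,2}$ with the obvious gradient; no boundary regularity enters there. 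The uniform quasicircle hypothesis is consumed entirely by your appeal to Theorem~\ref{T:BoP5.1}. The remaining steps (identification $\xi_k(D_k)=F(D_k)$ via orientation, uniqueness via three-point rigidity of conformal maps of Jordan regions, and Weyl's lemma for the measure-zero case) match the paper's.
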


Here we say that the embedding  $\xi\: S\ra \hC$ is  {\em orientation-preserving} if $\xi$ has an extension
to an orientation-preserving homeomorphism on the whole sphere $\hC$. This does not depend on the embedding and  is equivalent to the following statement: if we orient each peripheral circle $\partial_k D$ of the Sierpi\'nski carpet as in the theorem so that $S$ lies ``to the left" of $\partial D_k$ and if we equip $\xi(\partial D_k)$ with the induced orientation, then $\xi(S)$ lies to the left of $\xi(\partial D_k)$. 
  
\begin{proof} The proof relies on the  rather subtle, but well-known relation between 
quasiconformal, quasisymmetric, and quasi-M\"obius maps (for the definition of the latter class and related facts see \cite[Section~3]{Bo}).
In the proof we will omit some details that can easily be extracted from the considerations  in \cite[Section~5]{Bo}.  

 Under the given assumption  the image $S'=\xi(S)$ is also a
Sierpi\'n\-ski carpet that we can represent in the form 
 $S'=\hC\setminus\bigcup_{k\in \N}D'_k$ with pairwise disjoint  Jordan regions $D'_k$.
 Since $\xi$ maps the peripheral circles of $S$ to the peripheral circles of $S'$, we can choose the labeling so that $\xi(\partial D_k)=\partial D_k'$ for $k\in \N$.
 
For each embedding  $\xi_k\: \overline D_k \ra \hC$ as in the statement of the lemma, 
 we necessarily have $\xi_k(\overline D_k)=\overline D'_k$, because $\xi_k(\partial D_k)=\xi(\partial D_k)=\partial D'_k$ and  $\xi$ is 
orientation-preserving. 
 Moreover, $\xi_k$ is uniquely determined by $\xi|_{\partial D_k}$; this follows from the classical fact that 
a homeomorphic extension of a conformal map between given Jordan regions is uniquely determined by the image of three distinct  boundary points. 

Our original map $\xi$ and the unique maps $\xi_k$, $k\in \N$, piece together to a  homeomorphism $\tilde \xi \: \hC \ra \hC$   that is conformal  on $\hC\setminus S$. Moreover, $\tilde \xi$ is the unique homeomorphic extension of $\xi$ with this conformality property.

The Jordan curves $\partial D_k$, $k\in \N$, form a family of uniform quasicircles, and hence also their images $\partial D'_k=\xi(\partial D_k)$, $k\in \N$, under the quasisymmetry $\xi$. 
This implies that Jordan regions $D_k$ and $D'_k$   are {\em uniform quasidisks}. More precisely,  
there exist $K$-quasiconformal homeomorphisms $\alpha_k$ and $\alpha'_k$ on $\hC$  such that $\alpha_k(\overline D_k)=\overline \D$ and $\alpha'_k(\overline D'_k)=\overline \D$ for $k\in \N$, where $K\ge 1$ is independent of $k$. Then the maps $\alpha_k$ and $\alpha'_k$
are uniformly quasi-M\"obius (see \cite[Proposition~3.1~(i)]{Bo}).
Moreover, the homeomorphisms $\alpha'_k\circ \xi_k\circ \alpha_k^{-1}\: \overline \D \ra \overline \D$ are uniformly quasiconformal on $\D$, and hence also uniformly quasi-M\"obius
on $\overline \D$ (the last implication  is essentially well-known; one can reduce to  \cite[Proposition~3.1~(i)]{Bo} by Schwarz reflection in $\partial \D$ and use the fact that $\partial \D$ is removable for quasiconformal maps \cite[Section 35]{Va}). It follows that the maps
$\xi_k$, $k\in \N$,  are uniformly quasi-M\"obius.

Now  the maps $\xi_k|_{\partial D_k}=\xi|_{\partial D_k}$, $k\in \N$, are actually uniformly quasisymmetric, because $\xi$ is a quasisymmetric embedding. This implies that  the family   $\xi_k$, $k\in \N$, is also uniformly quasisymmetric (this can be seen as in the proof of \cite[Proposition~5.3~(ii)]{Bo}). Since $\xi=\tilde \xi|_S$ is quasisymmetric 
 and the maps $\xi_k=\tilde \xi|_{\overline D'_k}$ for  $k\in \N$ are  uniformly 
 quasisymmetric,  the homeomorphism $\tilde \xi$ is quasiconformal (this is shown as in the last part of the proof of \cite[Proposition~5.1]{Bo}). 

If $S$ has measure zero, then $\tilde \xi$ is a quasiconformal map that is conformal on the set $\hC\setminus S$ of full measure in $\hC$.
Hence $\tilde \xi$ is $1$-quasiconformal on $\hC$, which, as is well-known, implies that $\tilde \xi$ is a 
M\"obius transformation (one can, for example, derive this from the uniqueness part of Stoilow's factorization theorem \cite[p.~179, Theorem 5.5.1]{AIM}). 
\end{proof}

\section{Fatou components of postcritically-finite maps} \label{ss:compldyn} \no
In this section we record some facts related to complex dynamics. For  basic definitions and general background we refer to standard sources such as  \cite{Be, CG, Mi2, St}.
 
Let   $f$ be a rational map on the Riemann sphere $\hC$ of degree $\deg(f)\ge 2$. We denote 
by  $f^n$ for 
$n\in \N$ the $n$th-iterate of $f$, by  $\Ju(f)$ its Julia set  and by  $\Fa(f)$ its  Fatou set.
Then $$\Ju(f)=f(\Ju(f))=f^{-1}(\Ju(f))=\Ju(f^n), $$ and we have similar relations for the Fatou set. We will use these standard facts throughout.

A continuous map $f\: U\ra V$  between two regions $U,V\subseteq\hC$ is called {\em proper}   if for every compact set $K\sub V$ the set 
$f^{-1}(K)\sub  U$ is also compact.  If $f$ is a rational map on $\hC$, then its  restriction
$f|_{U}$ to $U$ is a proper  map $f|_{U}\: U\ra V$ if and only if $f(U)\sub V$ and $f(\partial U)\sub \partial V$. 

If $f$ is a rational map, it is a proper map between its Fatou components; more precisely, if $U$ is a Fatou component of $f$, then $V=f(U)$ is also a Fatou component of $f$, and the restriction is a proper map of $U$ onto $V$. In particular, the boundary of each Fatou component is mapped onto the boundary of another Fatou component.
Similarly, we can always  write 
$$f^{-1}(U)=U_1\cup \dots \cup  U_m,$$ 
where $U_1, \dots, U_m$ are Fatou components of $f$ that are mapped properly onto $U$.  


If $f\:U\ra V$ is a proper holomorphic map, possibly defined on a larger set than $U$, then the {\em topological degree} $\deg(f, U)\in \N$ of $f$ on $U$ is well-defined as the unique number such that 
$$ \deg(f, U)=\sum_{p\in f^{-1}(q)\cap U}\deg_f(p)$$
for all $q\in V$, where  $\deg_f(p)$ is the local degree of $f$ at $p$. 

Suppose that  $U\sub \hC$ is {\em finitely-connected}, i.e.,  $\hC\setminus U$ has only finitely 
many connected components, and let $k\in \N_0$ be the number  of components of $\hC\setminus U$. We call 
$\chi(U)=2-k$ the {\em Euler characteristic} of $U$ (see  \cite[Section~5.3]{Be} for a related discussion). 
The quantity $\chi(U)$ is invariant under homeomorphisms and can be obtained as a limit  of Euler characteristics of polygons (defined in the usual way as  for simplicial complexes)  forming a suitable  exhaustion of $U$. 
We have $\chi(U)=2$ if and only if $U=\hC$. So $\chi(U)\le 1$ for 
finitely-connected proper subregions $U$ of $\hC$ with    $\chi(U)=1$
if and only if $U$ is simply connected;

If $U$ and $V$ are finitely connected regions, and $f\: U\ra V$ is a proper holomorphic map, then a version of the Riemann-Hurwitz relation (see \cite[Section~5.4]{Be} and \cite[Chapter~1, Section~6]{St}) says that 
\begin{equation}\label{eq:RHform}
 \deg(f, U)\chi(V)=\chi(U)+\sum_{p\in U} (\deg_f(p)-1). 
 \end{equation}
Part of this statement is that the sum on the right-hand side of this identity is defined as it has only  finitely many non-vanishing terms. 

The Riemann-Hurwitz formula is valid in a limiting sense for regions  that are {\em infinitely connected}, i.e.,  not finitely-connected. In this case, the  relation simply says that if $U,V\sub \hC$ are regions and $f\:U\ra V$ is a proper holomorphic map, then $U$ is infinitely connected if and only if $V$ is infinitely connected.

If $f$ is a rational map on $\hC$, then a  point is called a {\em postcritical point} of $f$ if it is the image of a critical point of $f$ under some iterate of $f$. If we denote the set of these points by $\post(f)$, then we have 
$$ \post(f)=\bigcup_{n\in \N}f^n(\crit(f)). $$ 
The map $f$ is called {\em postcritically-finite} if every critical point has a finite orbit under iteration of $f$. This  is equivalent to the requirement that $\post(f)$ is a finite set. 
Note  that $\post(f)=\post(f^n)$ for all $n\in \N$. 

We denote by  $\post^{c}(f)\sub \post(f)$ the set of  points that lie in cycles of periodic critical points; so 
$$\post^c(f)=\{ f^n(c): n\in \N_0 \text{ and $c$ is a periodic critical point of $f$} \}. $$

 If $f$ is postcritically-finite, then $f$ can only have one possible type of periodic Fatou components (for the general classification of periodic Fatou components and their relation to critical points see \cite[Sections~7.1 and 9.1]{Be}); namely,  every periodic Fatou component $U$ is a {\em B\"ottcher domain} for some iterate of $f$: 
there exists an iterate $f^n$ and a superattracting fixed point $p$ of $f^n$ such that $p\in U$.   

The following, essentially well-known, lemma describes the dynamics of a 
postcritically-finite rational map on a fixed  Fatou component. 
Here and in the following we will use the notation $P_k$ for the 
{\em $k$-th power map} given by $P_k(z)=z^k$ for $z\in \C$, where $k\in \N$.

\begin{lemma}[Dynamics  on fixed Fatou components]\label{lem:postratFatou} 
Suppose $f$ is a postcritically-finite rational map, and $U$ a  Fatou component
of $f$ with $f(U)=U$. Then $U$ is simply connected, and contains precisely 
one critical point $p$ of $f$. We have $f(p)=p$ and $ U\cap\post(f)=\{p\}$, and there exists 
a conformal map $\psi\: U\ra \D$  with $\psi(p)=0$ such that $\psi\circ f\circ \psi^{-1}=P_k$, where $k=\deg_f(p)\ge 2$.  
\end{lemma}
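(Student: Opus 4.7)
My plan proceeds in three stages: locating a super-attracting critical fixed point in $U$, building a B\"ottcher conjugacy near it, and extending that conjugacy to all of $U$.

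Since $f$ is postcritically-finite, the Fatou--Sullivan classification of periodic Fatou components rules out parabolic basins, Siegel disks, Herman rings, and non-super-attracting attracting basins, as each would force $\post(f)$ to be infinite. Hence the fixed component $U$ must be the immediate basin of a super-attracting fixed point, producing a critical point $p \in U$ with $f(p) = p$ and $k := \deg_f(p) \ge 2$. B\"ottcher's theorem then yields a conformal map $\phi_0$ from some neighborhood $V_0$ of $p$ onto a disk around the origin in $\C$, with $\phi_0(p) = 0$ and $\phi_0 \circ f \circ \phi_0^{-1}(w) = w^k$. I would extend $\phi_0$ by the standard pullback: for each $n \in \N$ let $V_n$ denote the connected component of $f^{-n}(V_0)$ in $U$ containing $p$, and define $\phi_n(z) := \phi_0(f^n(z))^{1/k^n}$ on $V_n$, choosing the branch of the $k^n$-th root so $\phi_n(p) = 0$. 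Provided $V_n \setminus \{p\}$ contains no critical point of $f^n$, the map $\phi_n$ is a conformal embedding extending $\phi_{n-1}$; taking the union $V := \bigcup_n V_n$ yields a conformal embedding $\psi \: V \to \D$ with $\psi \circ f \circ \psi^{-1} = P_k$. In particular $V$ is simply connected, forward invariant under $f$, and $p$ is its unique critical point.

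The core of the proof is to show $V = U$, which at once gives simple connectivity of $U$, uniqueness of $p$ as a critical point in $U$, and $U \cap \post(f) = \{p\}$ --- these properties hold for $w \mapsto w^k$ on $\D$ and transfer through $\psi$. Every $z \in U$ satisfies $f^n(z) \to p$, hence $f^n(z) \in V_0 \subseteq V$ for large $n$; the issue is whether the pullback extension of $\psi$ actually reaches $z$, and this reduces to checking that $V \cap \post(f) = \{p\}$, so that no critical value of $f^n$ obstructs the pullback. To rule out additional postcritical points in $U$ I would invoke the Riemann--Hurwitz identity
\[
(d_U - 1)\chi(U) = \sum_{q \in U}(\deg_f(q) - 1)
\]
for the proper holomorphic self-map $f|_U \: U \to U$ of finite degree $d_U \ge k$, together with the integrality of the Euler characteristic for finitely connected regions and the limiting statement for infinitely connected domains. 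The fixed critical point $p$ already contributes $k-1$ to the right-hand side; any additional critical point would push $\chi(U)$ out of $\Z$ or force $U$ to be infinitely connected in a way that conflicts with the finiteness of $\post(f)$. This rigidity step --- excluding additional critical points in $U$ via Riemann--Hurwitz and the postcritically-finite hypothesis --- is where I expect the main obstacle; the Fatou-component classification and the B\"ottcher construction are classical, whereas ensuring that the maximal B\"ottcher domain fills the basin is where the postcritically-finite assumption does its essential work.
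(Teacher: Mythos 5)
Your blueprint---classification of Fatou components, B\"ottcher's theorem, then extension of the local conjugacy to all of $U$---is the same as the paper's; you extend by pulling $\phi_0$ back through $f^{-n}$, whereas the paper pushes the inverse B\"ottcher map $\varphi$ forward from a maximal disk $B(0,r)\sub\D$ and shows $r=1$. Both directions hinge on showing that no postcritical point other than $p$ obstructs the continuation, and your proposed way of closing that gap does not work. The claim that an extra critical point would ``push $\chi(U)$ out of $\Z$ or force $U$ to be infinitely connected'' is incorrect: $\chi(U)$ is a topological invariant of $U$ independent of $f$, and adding a critical point raises $d_U$ and the branching sum $\sum_q(\deg_f(q)-1)$ in lockstep, so the identity $(d_U-1)\chi(U)=\sum_q(\deg_f(q)-1)$ remains satisfied. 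For instance, the degree-$3$ Blaschke product $f(z)=z^2(z-a)/(1-\bar a z)$ on $\D$, $0<|a|<1$, is a proper self-map of the simply connected region $\D$ with a superattracting fixed point at $0$ of local degree $k=2$ and a second critical point elsewhere in $\D$; Riemann--Hurwitz raises no objection. So Riemann--Hurwitz applied to $f|_U$ cannot by itself exclude extra critical points; the postcritically-finite hypothesis must enter by a different route.

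The paper's fix is dynamical rather than combinatorial. Let $r\in(0,1]$ be maximal so that the B\"ottcher function $\varphi$ extends holomorphically to $B(0,r)$. The functional equation $f(\varphi(z))=\varphi(z^k)$ shows every point of $\varphi(B(0,r))\setminus\{p\}$ has an infinite forward orbit under $f$; by local uniformity of $f^n\to p$ on $U$, so does every point of $\overline{\varphi(B(0,r))}\setminus\{p\}$. Since $f$ is postcritically-finite, no critical point has an infinite orbit, so none lies in $\overline{\varphi(B(0,r))}\setminus\{p\}$, and the functional equation then continues $\varphi$ past radius $r$ if $r<1$---a contradiction, giving $r=1$. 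The same equation shows $\varphi$ has no critical points in $\D$, and the rate-of-convergence observation makes $\varphi\colon\D\to U$ proper and surjective. Riemann--Hurwitz applied to the \emph{unramified} proper map $\varphi$ (not to $f|_U$) then forces $\chi(U)=1$ and $\deg\varphi=1$, so $U$ is simply connected and $\varphi$ is conformal; only now does the conjugacy $\psi\circ f\circ\psi^{-1}=P_k$ yield that $p$ is the unique critical point and the unique postcritical point of $f$ in $U$. To repair your argument, replace the Riemann--Hurwitz step on $f|_U$ by this infinite-orbit exclusion of critical points, and reserve Riemann--Hurwitz for the unbranched B\"ottcher map.
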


So $p$ is a superattracting fixed point of $f$,  $U$ is the corresponding B\"ottcher domain of $p$, and on $U$ the map $f$ is conjugate to a power map.  Note that in general the map $\psi$ is not uniquely determined due to a rotational ambiguity; namely, one can replace $\psi$ with $a\psi$, where $a^{k-1}=1$. 

\begin{proof} As the statement is essentially well-known, we will only give a sketch 
of the proof. 

 By the classification of Fatou components it is clear  
that $U$ contains a superattracting fixed point $p$. Then $f(p)=p$ and $p$ is a critical point of $f$. Let $k=\deg_f(p)\ge 2$. Without loss of generality we may assume that $p=0$, and $\infty\not\in U$. 
Then there exists a holomorphic  function  $\varphi$ (the B\"ottcher function) defined 
in a neighborhood of $0$ with $\varphi(0)=0=p$, $\varphi'(0)\ne 0$, 
and 
\begin{equation}\label{eq:Boett}
f(\varphi(z))=\varphi(z^k)
\end{equation}
 for $z$ near $0$ \cite[Chapter 3, Section 3]{St}.  

Since the maps $f^n$, $n\in \N$,  form  a normal family on $U$ and $f^n(z)\to p$ for $z$ near $p$, we have  
\begin{equation}\label{eq:superattraction}
\text{$f^n(z)\to p=0$ as $n\to \infty$ locally uniformly for $z\in U$.}
\end{equation}
 
Let $r\in (0,1]$ be the maximal radius such that $\varphi$ has a  holomorphic extension  to  the Euclidean disk $B=B(0,r)$. Then \eqref{eq:Boett} remains valid on $B$. We claim that $r=1$, and so $B=\D$;
otherwise, $0<r<1$, and by using  \eqref{eq:Boett} and the fact that $f\: U\ra U$ is proper, 
 one can show that $\overline {\varphi (B)}\sub U$. The equation  \eqref{eq:Boett} implies 
that every point $q\in \varphi(B)\setminus \{p\}$ has an infinite orbit under iteration of $f$; by the local uniformity of the convergence in \eqref{eq:superattraction} this remains true for $q\in  \overline{\varphi(B)}\setminus \{p\}$. 
Since $f$ is postcritically-finite, this implies that  no point in $\overline{\varphi(B)}\setminus \{p\}$ can be a critical point of $f$; but then   \eqref{eq:Boett} allows us to  holomorphically extend 
$\varphi$ to a disk $B(0,r')$ with $r'>r$. This is a contradiction showing that indeed $r=1$ and $B=\D$.

As before by using   \eqref{eq:Boett}, one sees that  $\varphi(\D)\sub U$. Actually, one  also observes that for  points $q=\varphi(z)$ with $z\in \D$ closer and closer  to $\partial \D$, the convergence  $f^n(q)\to p$ is at a slower and slower rate. By \eqref{eq:superattraction} this is only possible if $\varphi(z)$ is close to $\partial U$ if $z\in \D$ is close to $\partial \D$; in other words, $\varphi$ is a proper map of $\D$ to $U$ and in particular $\varphi(\D)=U$. 

It follows from \eqref{eq:Boett} that $\varphi$ cannot have any critical points in $\D$ (to see this, argue by contradiction and consider a critical point $c\in \D$ of $\varphi$ with smallest absolute value $|c|$). The Riemann-Hurwitz formula \eqref{eq:RHform} then implies that $\chi(U)=1$ and $\deg(\varphi)=1$. In particular,  $U$ is simply connected and $\varphi$ is a conformal map of  $\D$ onto $U$. For the conformal map $\psi=\varphi^{-1}$ from $U$ onto $\D$ we then have 
$\psi(p)=0$ and we get 
the desired relation $\psi\circ f\circ \psi^{-1}=P_k$. This relation (or again  \eqref{eq:Boett})
 implies that the fixed point $p$ is the only critical point of $f$ in $U$ and that  each point $q\in U\setminus \{p\}$ has an  infinite orbit; so $p$ is  the only postcritical point of $f$ in $U$. 
 \end{proof}

The following lemma gives us control for the mapping behavior of iterates of a rational map
onto regions containing at most one postcritical point.

\begin{lemma}\label{lem:deg}
Let $f\: \hC\ra \hC$ be a  rational map, $n\in \N$, $U\sub 
\hC$ be  a simply connected region with $\#\hC\setminus U\ge 2$ and
$\#(U\cap \post(f))\le 1$, and $V$ be a component of $f^{-n}(U)$.
 Let $p\in U$ be the unique point in $U\cap \post(f)$ if  $\#(U\cap \post(f))=1$ and $p\in U$ be arbitrary if   $U\cap \post(f)=\emptyset$, and let $\psi_U\: U\ra \D $ be a conformal map with $\psi_U(p)=0$.

Then $V$ is simply connected,  the map $f^n\: V\ra U$ is proper, and  
there 
exists $k\in \N$,  and a conformal map $\psi_V\:V\ra \D$
with  $\psi_U\circ f^n=P_k\circ \psi_V$.

Here $k=1$ if $U\cap \post(f)=\emptyset$. Moreover, if  $U\cap \post^c(f)=\emptyset$ then $k\le N$, where $N=N(f)\in \N$ is a constant only depending on $f$.  \end{lemma}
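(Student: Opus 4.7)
The plan is to split the proof into three phases: verify properness of $f^n|_V$ and identify where its critical values can lie, build $\psi_V$ by reducing to a classification of finite covers of the punctured disk, and finally control the local degree at the unique preimage of $p$ via a combinatorial orbit argument.

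First, because $f^n\:\hC\ra \hC$ is proper, each component $V$ of $f^{-n}(U)$ is open, and since distinct components of the open set $f^{-n}(U)$ are disjoint, one has $\partial V\cap f^{-n}(U)=\emptyset$, i.e., $f^n(\partial V)\sub \hC\setminus U$. It follows that $f^n|_V$ is a proper holomorphic map onto $U$ of some degree $d\in \N$. Its critical values lie in $f^n(\crit(f^n))\cap U\sub \post(f)\cap U$, a set with at most one element; when $U\cap \post(f)=\{p\}$, that element is $p$.

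Second, I form $g:=\psi_U\circ f^n|_V\: V\ra \D$, a proper holomorphic map whose critical values lie in $\{0\}$. If $U\cap \post(f)=\emptyset$, then $g$ is an unramified proper map, hence a covering of the simply connected disk $\D$, and so $g$ is a conformal isomorphism; setting $\psi_V=g$ gives the desired relation with $k=1$. If $U\cap \post(f)=\{p\}$, let $V':=V\setminus g^{-1}(0)$. Removing a finite set of isolated points from the connected $2$-manifold $V$ leaves it connected, so $g\: V'\ra \D^*:=\D\setminus \{0\}$ is a connected finite unramified cover. Since finite connected covers of $\D^*$ are classified up to isomorphism by the subgroups $k\Z\sub \Z=\pi_1(\D^*)$, there is a conformal isomorphism $\phi\: V'\ra \D^*$ with $g=P_k\circ \phi$, where $k$ is the degree of the cover. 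Properness of $g$ together with $\phi$ identifying the unique interior puncture of $V'$ with $0$ forces $g^{-1}(0)$ to consist of a single point $p'\in V$, and $\phi$ extends by the removable-singularity theorem to a conformal isomorphism $\psi_V\: V\ra \D$ with $\psi_V(p')=0$ and $\psi_U\circ f^n=P_k\circ \psi_V$. In particular, $V$ is simply connected and $k=\deg_{f^n}(p')=d$.

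Third, suppose $U\cap \post^c(f)=\emptyset$; I bound $k$ uniformly in $n$ and $V$. By the chain rule for local degrees,
\begin{equation*}
 k=\deg_{f^n}(p')=\prod_{j=0}^{n-1}\deg_f(f^j(p')),
\end{equation*}
where a factor exceeds $1$ only when $f^j(p')\in \crit(f)$. Were some critical point visited at two distinct times along the orbit $p',f(p'),\dots,f^{n-1}(p')$, the orbit would be eventually periodic, with $f^i(p')=f^j(p')$ for some $0\le i<j\le n$ and the cycle $\{f^i(p'),\dots,f^{j-1}(p')\}$ containing a critical point. But then this cycle lies in $\post^c(f)$, and since $n\ge i$ the point $p=f^n(p')$ belongs to the same cycle, contradicting $U\cap \post^c(f)=\emptyset$. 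Hence no critical point appears more than once in the orbit of $p'$, and
\begin{equation*}
 k\le \prod_{c\in \crit(f)}\deg_f(c)=:N(f),
\end{equation*}
which depends only on $f$.

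The decisive step is the combinatorial one: the uniform bound on $k$ hinges entirely on precluding the orbit of $p'$ from re-entering a critical point, which is precisely what the hypothesis $U\cap \post^c(f)=\emptyset$ secures. The other phases amount to standard covering-space bookkeeping once the Riemann-Hurwitz picture of $f^n|_V$ is concentrated at a single critical value.
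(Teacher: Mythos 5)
Your proof is correct, and the combinatorial bound on $k$ in the last phase coincides with the paper's argument. The middle phase, however, takes a genuinely different route. The paper applies the Riemann--Hurwitz formula to the proper map $g=f^n|_V\colon V\to U$: since $p$ is the only possible critical value, the formula collapses to $\chi(V)=\#g^{-1}(p)$, and the planar constraint $\chi(V)\le 1$ forces $\chi(V)=1$ and $\#g^{-1}(p)=1$ in one stroke. With $V$ simply connected, they take an arbitrary Riemann map $\psi_V$ with $\psi_V(q)=0$, observe that $\psi_U\circ f^n\circ\psi_V^{-1}$ is a proper self-map of $\D$ and hence a finite Blaschke product with $B^{-1}(0)=\{0\}$, so $B(z)=\lambda z^k$, and absorb the rotation into $\psi_V$. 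You instead puncture $V$ at $g^{-1}(0)$, identify $g\colon V'\to\D^*$ as a finite connected unramified cover, invoke the classification $\pi_1(\D^*)\cong\Z$ to get a conformal isomorphism $\phi\colon V'\to\D^*$ intertwining $g$ with $P_k$, and then fill in the puncture by removable singularities. The two approaches trade off in the usual way: the paper's Riemann--Hurwitz computation delivers simple connectivity and the single-preimage count immediately and keeps all book-keeping on the Euler-characteristic side, while your covering-space route produces the normalized map $\psi_V$ directly without a separate Blaschke-product normalization step. The one place in your argument that deserves to be spelled out a bit more is the assertion that $g^{-1}(0)$ is a single point: this follows because $V$ is a planar domain, so $\pi_1(V)$ is free of some rank $r$, and removing $m=\#g^{-1}(0)\ge 1$ points yields free rank $r+m$; since $V'\cong\D^*$ has $\pi_1\cong\Z$, we must have $r+m=1$, hence $r=0$ and $m=1$, giving both simple connectivity of $V$ and the uniqueness of the preimage in one step.
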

 
In particular, for given $f$ the number $k$ is uniformly bounded by a constant $N$ independently of $n$ and $U$ if $U\cap \post^c(f)=\emptyset$.
 
 \begin{proof}  Under the given assumptions,  $V$ is a region, and the map
 $g:=f^n|_V\: V\ra U$ is proper.
 
 Since  $U\cap\post(f)\sub \{p\}$, the point $p$ is the only possible critical value of $g$. It  follows from the Riemann-Hurwitz formula  \eqref{eq:RHform} that 
\begin{eqnarray*}
\chi(V) &=&\deg(g,V)\chi(U)-\sum_{z\in V}(\deg_{g}(z)-1)\\ 
&=& \deg(g,V)-(\deg(g,V)-\#g^{-1}(p))\, =\, \#g^{-1}(p).
\end{eqnarray*}
As $\chi(V)\le 1$, this is only possible if $\chi(V)=1$ and $\#g^{-1}(p)=1$;  so $V$ is simply connected and $p$ has precisely one preimage $q$ in $V$ which is the only possible critical point of $g$. Obviously, $\#\hC\setminus V\ge 2$, and so 
 there exists a  conformal map  $\psi_V\:  V\ra \D$ with $\psi_V(q)=0$. Then 
$(\psi_U\circ f^n\circ \psi_V^{-1})$ is a proper holomorphic map from $\D$ to itself  and hence a finite Blaschke product $B$.
Moreover, $B^{-1}(0)=\{0\}$, and so  we can replace $\psi_V$ by
a postcomposition with  a suitable rotation around $0$ so that $B(z)=z^k$ for $z\in \D$, where $k=\deg(g)\in \N$. If $U\cap \post(f)=\emptyset$, then $q$ cannot be  a critical point of $g$, and so 
$k=1$. 

It remains to produce a uniform upper bound for $k$ if we assume in addition 
 that $U\cap \post^c(f)=\emptyset$. Then in the list $q, f(q), \dots, f^{n-1}(q)$ each critical point of $f$ can appear at most once; indeed, otherwise the list contains a periodic critical point which implies that    $p=f^n(q)\in  U\cap \post^c(f)$,  contradicting  our additional hypothesis. 
  
We conclude  that 
$$k=\deg_{f^n}(q)=\prod_{i=0}^{n-1} \deg_f(f^i(q))\le 
 N=N(f):=\prod_{c\in \crit(f)}\deg_f(c),$$
which gives the desired uniform upper bound for $k$. 
\end{proof}

The next lemma  describes the dynamics of a  postcritically-finite rational map on arbitrary Fatou components. 

\begin{lemma}[Dynamics on the Fatou components]\label{lem:FatouDyn} Let $f\: \hC\ra \hC$ be a  postcritically-finite rational map, and $\mathcal{C}$ be the collection of all Fatou components of $f$. Then there exists a family $\{\psi_U\:U\ra \D: U\in \mathcal{C}\}$ of conformal maps with the following property: 
if $U$ and $V$ are Fatou components of $f$ with $f(V)=U$, then 
\begin{equation}\label{eq:desFatcomp}
\psi_U\circ f=P_k\circ \psi_V
\end{equation}
 on $V$ for some $k=k(U,V)\in \N$. 

Moreover, for each $U\in \mathcal{C}$ the point $p_U:=\psi_U^{-1}(0)$ is  the unique point in $U\cap\bigcup_{n\in \N_0} f^{-n}(\post(f))$. 
\end{lemma}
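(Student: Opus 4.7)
The plan is to construct the conformal maps $\psi_U$ in two stages—first for Fatou components in each periodic cycle of $f$, then for the remaining components by pulling back through $f$—and afterwards verify the \emph{moreover} claim.

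For the periodic stage, fix a cycle $U_0\to U_1\to\cdots\to U_{n-1}\to U_0$ under $f$. Since $\post(f^n)=\post(f)$, Lemma~\ref{lem:postratFatou} applied to $f^n$ on $U_0$ shows that each $U_i$ is simply connected with unique postcritical point $p_i:=f^i(p_0)$, where $p_0$ is the superattracting fixed point of $f^n$ in $U_0$; it also produces a conformal map $\phi_0\: U_0\to\D$ with $\phi_0(p_0)=0$ and $\phi_0\circ f^n=P_K\circ\phi_0$, where $K=\deg_{f^n}(p_0)=k_0 k_1\cdots k_{n-1}$ and $k_i:=\deg_f(p_i)$. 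I then choose arbitrary Riemann maps $\phi_i\: U_i\to\D$ with $\phi_i(p_i)=0$ for $1\le i\le n-1$. A Riemann--Hurwitz computation based on \eqref{eq:RHform} (using the uniqueness of $p_{i+1}$ as a postcritical point in $U_{i+1}$) shows that $f\: U_i\to U_{i+1}$ is proper of degree $k_i$ ramified only at $p_i$, so $F_i:=\phi_{i+1}\circ f\circ \phi_i^{-1}$ is a finite Blaschke product of $\D$ with a single zero at $0$; hence $F_i(w)=e^{i\alpha_i}w^{k_i}$ for some $\alpha_i\in\R$.

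Next I seek rotations $\lambda_i=e^{i\beta_i}$ with $\beta_0=0$ so that $\psi_i:=\lambda_i\phi_i$ satisfies $\psi_{i+1}\circ f=P_{k_i}\circ\psi_i$ on $U_i$. This is equivalent to the recursion $\beta_{i+1}\equiv k_i\beta_i-\alpha_i\pmod{2\pi}$, which iterates unambiguously, and the closing condition $\beta_n\equiv\beta_0\pmod{2\pi}$ reduces to $F_{n-1}\circ\cdots\circ F_0=P_K$. The latter holds automatically because that composition equals $\phi_0\circ f^n\circ\phi_0^{-1}=P_K$ by the B\"ottcher relation in Lemma~\ref{lem:postratFatou}. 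I expect this cycle-compatibility to be the main delicate point, and it is resolved for free by the linearization of $f^n$ on $U_0$.

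For components outside all periodic cycles, I induct on the preperiod $m(V):=\min\{m\ge 0\: f^m(V)\text{ is periodic}\}$. For $V$ with $m(V)=m\ge 1$, set $U:=f(V)$, so $m(U)=m-1$. By induction $U$ is simply connected with $\#(U\cap\post(f))\le 1$, and $\psi_U$ is defined with $p_U=\psi_U^{-1}(0)$. Applying Lemma~\ref{lem:deg} with $n=1$ and base point $p=p_U$ (or any point if $U\cap\post(f)=\emptyset$) yields $V$ simply connected and a conformal $\psi_V\: V\to\D$ with $\psi_U\circ f=P_k\circ\psi_V$ on $V$ for some $k\in\N$. Set $p_V:=\psi_V^{-1}(0)$; this is the unique preimage of $p_U$ in $V$ under $f$. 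Any $q\in V\cap\post(f)$ satisfies $f(q)\in U\cap\post(f)\subset\{p_U\}$, hence $q\in f^{-1}(p_U)\cap V=\{p_V\}$, so $\#(V\cap\post(f))\le 1$ and the induction continues. Finally, I verify $U\cap\bigcup_{n\ge 0}f^{-n}(\post(f))=\{p_U\}$ by induction on $m(U)$. For periodic $U$ of period $n$ and $q\in U$ with $f^j(q)\in\post(f)$, the uniqueness of the postcritical point in $f^j(U)$ forces $f^j(q)=p_{j\bmod n}$, and $n-(j\bmod n)$ further applications of $f$ carry this to $p_U$; hence $f^{nm}(q)=p_U$ for some $m\ge 1$, and the B\"ottcher conjugacy $\phi_0\circ f^{nm}\circ\phi_0^{-1}=P_{K^m}$ yields $\phi_0(q)^{K^m}=0$, i.e., $q=p_U$. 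If $U$ is preperiodic with $W=f(U)$, any $q$ in the intersection has $f(q)\in W\cap\bigcup_{n\ge 0}f^{-n}(\post(f))=\{p_W\}$ by induction, and the unique-preimage property from the Lemma~\ref{lem:deg} construction in the previous paragraph forces $q=p_U$.
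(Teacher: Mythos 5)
Your proposal is correct and follows essentially the same route as the paper: Lemma~\ref{lem:postratFatou} handles the periodic cycles (your explicit rotation recursion with the closing condition $F_{n-1}\circ\cdots\circ F_0=P_K$ is just a more detailed version of the paper's observation that $\psi_n\circ f^n=\psi_0\circ f^n$ forces $\psi_n=\psi_0$), and Lemma~\ref{lem:deg} drives the induction on preperiod. The only cosmetic difference is in the \emph{moreover} part, where you compute directly in the B\"ottcher coordinate while the paper argues via finiteness of orbits of points in $\bigcup_n f^{-n}(\post(f))$; both are valid.
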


In contrast to the points $p_U$  the maps $\psi_U$ are not uniquely determined in general due to a certain   rotational freedom. As we will see in the proof of the lemma, $p_U$ can also be characterized as the unique point in $U$ with a finite orbit under iteration of $f$.  In the following, we will
choose   $p_U$  as a basepoint in the Fatou component 
$U$. If we take $0$ as a basepoint in $\D$, then in the previous lemma 
we get the following commutative diagram of basepoint-preserving maps between 
{\em pointed} 
 regions (i.e., regions with a distinguished basepoint):
\begin{equation*}
  \xymatrix {
 (V,p_V)   \ar[r]^{\psi_V}  \ar[d]_{f}  & (\D,0)   \ar[d]^{P_k}   \\
    (U,p_U)  \ar[r]^{\psi_U}  &   (\D,0)
  }
\end{equation*}
Note that this implies in particular that $f^{-1}(p_U)=\{p_V\}$ and that $f\: V\setminus \{p_V\}\ra 
U\setminus\{p_U\}$ is a covering map. 

\begin{proof} We first construct  the desired maps $\psi_U$ for the  periodic Fatou components
$U$ of $f$. So fix  a periodic Fatou component $U$ of $f$, and let $n\in \N$ be the period 
of $U$, i.e., if we  define $U_0:=U$ and $U_{k+1}=f(U_k)$ for $k=0, \dots, n-1$, then 
the Fatou components $U_0, \dots, U_{n-1}$ are all distinct, and $U_n=f^n(U)=U$. 
 
By Lemma~\ref{lem:postratFatou} applied to the map $f^n$, for each $k=0, \dots, n-1$  the Fatou component $U_k$ is simply connected and there exists a unique point $p_k\in U_k$ that lies in $\post(f)=\post(f^n)$. Moreover, there 
exists  a conformal map $\psi_0\: U_0\ra \D$ with $\psi_0(p_0)=0$ such that 
$\psi_{0}\circ f^n=P_{d}\circ \psi_{0}$ for suitable $d\in \N$. 

Let $\psi_1\: U_1\ra \D$ be a conformal map with $\psi_1(p_1)=0$. By the argument in the proof of Lemma~\ref{lem:deg} we know that $B=\psi_1\circ f \circ \psi_0^{-1}$ is a finite Blaschke product $B$ with $B^{-1}(0)=\{0\}$ and so $B(z)=az^{d_1}$ for  suitable constants $d_1\in\N$ and $a\in \C$ with $|a|=1$. By adjusting $\psi_1$ by a suitable rotation factor if necessary, we may assume that $a=1$. Then $\psi_1\circ f=P_{d_1}\circ \psi_0$ on $U_0$. If  we repeat this argument, then we get conformal maps $\psi_k\:U_k\ra \D$ with $\psi_k(p_k)=0$ and 
\begin{equation}\label{eq:perdFatok}
\psi_{k}\circ f=P_{d_k}\circ \psi_{k-1}
\end{equation} on $U_{k-1}$ with suitable $d_k\in \N$ for 
$k=1, \dots, n$. Note that 
$$ \psi_n\circ f^n = P_{d_n}\circ \psi_{n-1}\circ f^{n-1}=\dots=P_{d_n}\circ \dots\circ P_{d_1}\circ \psi_0=P_{d'}\circ \psi_0$$
on $U_0$, where $d'\in \N$. On the other hand, $\psi_0\circ f^n=P_d\circ \psi_0$ by definition of $\psi_0$.
Hence $d=\deg_{f^n}(p_0)=d'$, and so $ \psi_n\circ f^n=  \psi_0\circ f^n$ on $U_0$ which implies $\psi_n=\psi_0$. If we  now define  $\psi_{U_k}:=\psi_k$ for $k=0, \dots, n-1$, then by \eqref{eq:perdFatok} the desired relation \eqref{eq:desFatcomp} holds for each suitable pair 
of Fatou components from the  cycle $U_0, \dots, U_{n-1}$.  We also choose $p_{U_k}=p_k\in 
U_k$ as a basepoint in $U_k$ for $k=0, \dots, n-1$. We know that $p_k$ is the unique point in 
$U_k$ that lies in $\post(f)$.  Since $f$ is postcritically-finite, each point in 
 $P:=\bigcup_{n\in \N_0} f^{-n}(\post(f))$ has a finite orbit under iteration of $f$.
It follows from Lemma~\ref{lem:postratFatou} that 
each  point $p\in U_k\setminus \{p\}$ has an infinite orbit  and therefore cannot lie in 
$P$.  Hence $p_{U_k}$ is the unique point in $U_k$ that lies in $P$. 

We repeat this argument for the other finitely many periodic Fatou components $U$
to obtain suitable conformal maps $\psi_U\: U\ra \D$ and unique  basepoints $p_U=\psi_U^{-1}(0)\in U\cap P$.

If $V$ is a non-periodic Fatou component, then it is mapped to a periodic Fatou component 
by a sufficiently high iterate of $f$ (this is Sullivan's theorem on the non-existence of wandering domains; see \cite[p.~176, Theorem~8.1.2]{Be}). We call the smallest number $k\in \N_0$ such that $f^k(V)$ is a periodic Fatou component the {\em level} of $V$.

Suppose $V$ is an arbitrary Fatou component of  level $1$. Then $U=f(V)$ is periodic, and so $\psi_U$ and $p_U$ are already defined and we know that $\{p_U\}=\post(f)\cap U$. Hence by Lemma~\ref{lem:deg} there exists a conformal map $\psi_V\: V\ra U$ such that \eqref{eq:desFatcomp} is valid.
If $p_V:=\psi_V^{-1}(0)$, then $f(p_V)=p_U\in \post(f)$, and so $p_V\in V\cap P$.
Moreover, \eqref{eq:desFatcomp} shows that $f(V\setminus \{p_V\})=U\setminus\{p_U\}$ which implies that each point in $V\setminus \{p_V\}$ has an infinite orbit and cannot lie in $P$. It follows that $p_V$ is the unique point in $V$ that lies in $P$. 

We repeat this argument for Fatou components of higher and higher level. Note that 
if for a Fatou component $U$  a conformal map $\psi_U\: U\ra \D$ has already been constructed and we know that  $p_U:=\psi^{-1}(0)$ is the unique point in $U\cap P$, then
$U\cap \post(f) \sub \{p_U\}$ and we can again apply Lemma~\ref{lem:deg} for a  Fatou component  $V$ with $f(V)=U$. 

In this way we obtain conformal maps $\psi_U$ as desired for all Fatou components $U$.
 The point $p_U=\psi_U^{-1}(0)$ is the unique point in $U$ that lies in $P$, because $f^k(p_U)\in \post(f)$ for some $k\in\N_0$ and all other points in $U$ have an infinite orbit.  
\end{proof}

We conclude this section with a lemma that is required in the proof of Theorem~\ref{thm:main2}.

\begin{lemma}[Lifting lemma]\label{lem:lifting} Let $f\: \hC\ra \hC$ be a  postcritically-finite rational map, $n\in \N$,
and  $(U,p_U)$ and   $(V,p_V)$ be pointed Fatou components of $f$ that are Jordan regions with $f^n(U)=V$. Suppose $D\sub \hC$ is another Jordan region with a basepoint $p_D\in D$, and suppose that $\alpha\: \overline D\ra \overline V $ is a map with the following properties:

\begin{itemize}
\item[\textnormal{(i)}] $\alpha$ is continuous on $ \overline D$ and holomorphic  on
 $D $, 

\smallskip 
\item[\textnormal{(ii)}]  $\alpha^{-1}(p_V)=\{p_D\}$, 

\smallskip 
\item[\textnormal{(iii)}] there exists a continuous map $\beta\: \partial D \ra \partial U$ with 
$f^n\circ \beta = \alpha|_{\partial D}$. 
\end{itemize} 
Then there exists a unique continuous map $\tilde \alpha\: \overline D\ra\overline U $ 
with $f^n\circ \tilde \alpha= \alpha$ and $\tilde \alpha|_{\partial D}=\beta$. Moreover, $\tilde \alpha $ is 
holomorphic on $D$ and satisfies $\tilde \alpha^{-1}(p_U)=\{p_D\}$. 

If, in addition,  $\beta$ is a homeomorphism of $\partial D$ onto $\partial U$, then $\tilde \alpha$ is a conformal homeomorphism of $\overline D$ onto $ \overline U$.
\end{lemma}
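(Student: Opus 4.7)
The plan is to conjugate the lifting problem to an analogous problem on the unit disk via the B\"ottcher-type coordinates produced by Lemma~\ref{lem:FatouDyn}, where it reduces to extracting a holomorphic $K$-th root. Iterating Lemma~\ref{lem:FatouDyn} along a chain $U=U_0,U_1,\dots,U_n=V$ of Fatou components with $f(U_{i-1})=U_i$ yields conformal maps $\psi_U\:U\to\D$ and $\psi_V\:V\to\D$ with $\psi_U(p_U)=\psi_V(p_V)=0$ satisfying $\psi_V\circ f^n = P_K\circ \psi_U$ on $U$ for some $K\in\N$. Since $U$ and $V$ are Jordan regions, Carath\'eodory's theorem extends $\psi_U,\psi_V$ to homeomorphisms of the closures, and the identity persists on $\overline U$ by continuity. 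Setting $F:=\psi_V\circ\alpha\:\overline D\to\overline\D$ and $\gamma:=\psi_U\circ\beta\:\partial D\to\partial\D$, hypotheses (i)--(iii) translate to: $F$ is continuous on $\overline D$, holomorphic on $D$, $F^{-1}(0)=\{p_D\}$, and $F|_{\partial D}=\gamma^K$. It then suffices to construct a continuous $G\:\overline D\to\overline\D$, holomorphic on $D$, with $G^K=F$ and $G|_{\partial D}=\gamma$; the map $\tilde\alpha:=\psi_U^{-1}\circ G$ will fulfill all the required properties, and automatically satisfy $\tilde\alpha^{-1}(p_U)=\{p_D\}$.

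To build $G$, I pre-compose with a Riemann map $\phi\:\overline\D\to\overline D$ sending $0$ to $p_D$ and assume without loss of generality that $D=\D$ and $p_D=0$. Since $F$ does not vanish on $\partial\D$, the argument principle identifies the multiplicity $m\ge 1$ of the zero of $F$ at $0$ with the winding number of $\gamma^K$ around $0$, giving $m=K\deg(\gamma)$; in particular $K\mid m$ and $\deg(\gamma)\ge 1$. Factor $F(z)=z^m h(z)$, where $h$ is holomorphic on $\D$ and continuous and nonvanishing on $\overline\D$. Because $\overline\D$ is simply connected, $h$ admits a continuous $K$-th root $h^{1/K}$ on $\overline\D$ that is holomorphic on $\D$; then $G_0(z):=z^{m/K}h^{1/K}(z)$ satisfies $G_0^K=F$. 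On $\partial\D$ the ratio $G_0/\gamma$ is continuous and takes values in the $K$-th roots of unity, so by connectedness it is constant equal to some $\zeta$, and $G:=\zeta^{-1}G_0$ meets all the requirements. The main technical point is producing a $K$-th root of $h$ that extends continuously up to $\partial\D$, which is handled by the simple connectedness of $\overline\D$ and a single correction by $\zeta$.

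For uniqueness, any continuous lift $\tilde\alpha\:\overline D\to\overline U$ of $\alpha$ must satisfy $\tilde\alpha^{-1}(p_U)=\alpha^{-1}(p_V)=\{p_D\}$, because $p_U$ is the unique preimage of $p_V$ in $\overline U$ under $f^n$ (as seen from $\psi_V\circ f^n=P_K\circ\psi_U$). Restricted to $\overline D\setminus\{p_D\}$, which is connected, two continuous lifts agreeing on $\partial D$ must coincide by unique path lifting for the covering map $f^n\:\overline U\setminus\{p_U\}\to\overline V\setminus\{p_V\}$, and they then agree at $p_D$ by continuity. For the final assertion, if $\beta$ is a homeomorphism $\partial D\to\partial U$, then $\gamma$ is a homeomorphism $\partial D\to\partial\D$ of degree $\pm 1$; combined with $\deg(\gamma)\ge 1$ this forces $\deg(\gamma)=1$ and $m=K$. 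Then $G$ has a unique simple zero in $\D$, and by the argument principle every $w\in\D$ has exactly one preimage under $G|_\D$, so $G|_\D\:\D\to\D$ is a conformal bijection; together with the boundary homeomorphism $\gamma$ this makes $G$ a homeomorphism $\overline\D\to\overline\D$ conformal on $\D$, and hence $\tilde\alpha=\psi_U^{-1}\circ G$ is a conformal homeomorphism $\overline D\to\overline U$.
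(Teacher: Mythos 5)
Your proof is correct, and it takes a genuinely different route from the paper's, although both begin with the same reduction (via the B\"ottcher coordinates from Lemma~\ref{lem:FatouDyn} or Lemma~\ref{lem:deg} and a Riemann map) to the normalized picture $\overline U=\overline V=\overline\D$, $p_U=p_V=0$, $f^n=P_k$, $D=\D$, $p_D=0$. From that point the paper defines the homotopy $H(\zeta,t)=\alpha(t\zeta)$ on $\partial\D\times(0,1]$ into $\overline\D\setminus\{0\}$ and invokes the homotopy lifting theorem for the covering $P_k\colon\overline\D\setminus\{0\}\to\overline\D\setminus\{0\}$, assembling the lift $\tilde\alpha$ from the lifted homotopy and extending by continuity at $0$; holomorphy then comes from observing that $\tilde\alpha$ is a continuous branch of the $k$-th root. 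You instead factor $F(z)=z^m h(z)$ with $h$ nonvanishing and continuous on $\overline\D$, use the argument principle to identify $m=K\deg(\gamma)$ (so $K\mid m$), extract a continuous $K$-th root of $h$ on the simply connected $\overline\D$, and correct by a constant $K$-th root of unity so the boundary restriction equals $\gamma$. Your approach is more elementary---only winding numbers and the existence of logarithms on simply connected sets---and also more explicit, exhibiting $\tilde\alpha$ in closed form; the paper's homotopy-lifting argument is more conceptual and hands you uniqueness for free from uniqueness of lifted homotopies. You handle uniqueness separately via unique lifting on $\overline D\setminus\{p_D\}$, and your degree-one argument for the final homeomorphism claim matches the paper's argument-principle sketch. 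All steps check out: the argument principle applies to $F$ on $\overline\D$ via a limiting circle $|z|=r\uparrow 1$, the $K$-th root $h^{1/K}$ is holomorphic on $\D$ because it is locally a branch of $\exp(\tfrac1K\log h)$, and $|G|\le 1$ follows from $|G|^K=|F|\le 1$.
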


Here we call a map $\varphi\: \overline \Om \ra \overline \Om'$ between the closures of two Jordan regions $\Om, \Om'\sub \hC$ a {\em conformal homeomorphism}  if $\varphi$ is a homeomorphism of   $\overline \Om$ onto $\overline \Om'$ and a conformal map of  $\Om$ onto $\Om'$.

Note that in the previous lemma  we necessarily have $f^n(\overline U)=\overline V$,  $\alpha(p_D)=p_V=f^n(p_U)$, and  $\alpha(\partial D)\sub \partial V$ by (iii).   In the conclusion of the lemma we obtain a lift $\tilde \alpha$ for a given map $\alpha$
under the  branched covering map $f^n$  so that    the  diagram
\begin{equation*}
  \xymatrix {
  &\overline U  \ar[d]^{f^n}   \\
    \overline{D}  \ar[ur]^{\tilde \alpha}  \ar[r]^{\alpha}  &   \overline V 
  }
\end{equation*}
commutes. 
By Lemma~\ref{lem:FatouDyn} the map  $f^n$ is actually an unbranched covering map from $\overline U\setminus \{p_U\}$ onto 
$\overline V\setminus \{p_V\}$. The lemma asserts that  the existence and uniqueness 
 of a lift $\tilde \alpha$  is guaranteed if the boundary map 
 $\alpha|_{\partial D}$ has a lift (namely $\beta$), and if we have some compatibility condition 
 for branch points (given by condition (ii)). 
 
 \begin{proof} The lemma easily follows from some basic theory for covering maps and lifts
 (see \cite{Ha} for general background), so we will only sketch the argument and leave some straightforward details to the reader.
  
 By Lemma~\ref{lem:deg} we can change $\overline U$ and $\overline V$ by conformal homeomorphisms so that we can assume $\overline U=\overline V=\overline \D$, $p_U=0=p_V$,  and $f^n=P_k$ for suitable $k\in \N$ without loss of generality. 
 By classical  conformal mapping theory we may also assume that $D=\D$ and $p_D=0$. 
 Then condition (ii) translates to $\alpha(0)=0$ and $\alpha(z)\ne 0$ for $z\in \overline \D\setminus \{0\}$. 

We use this to define a homotopy of the boundary map $\alpha|_{\partial \D}$ into the base space $\overline \D\setminus \{0\}$ of the covering map $P_k\: \overline \D\setminus \{0\}\ra \overline \D\setminus \{0\}$. Namely, let  $H\: \partial \D\times (0,1]\ra \overline \D\setminus \{0\}$ be defined as
$H( \zeta,t):=\alpha(t\zeta)$ for $\zeta\in \partial \D$ and $t\in(0,1]$.  It is convenient to think of $H$ as a homotopy running backwards in time $t\in (0,1]$ starting at $t=1$. 
Note that $P_k\circ\beta=\alpha|_{\partial \D}=H(\cdot, 1)$. So for the initial time $t=1$ the homotopy has the  lift $\beta$ under the covering map $P_k\: \overline \D\setminus \{0\}\ra \overline \D\setminus \{0\}$. By the homotopy lifting theorem \cite[p.~60, Proposition~1.30]{Ha}, the whole homotopy $H$ has a unique lift
starting at $\beta$, i.e., there exists a unique continuous map $\widetilde H\:  \partial \D\times (0,1]\ra \overline \D\setminus \{0\}$ such that 
$P_k\circ \widetilde H=H$ and $ \widetilde H(\cdot, 1)=\beta$. Now we define $\tilde \alpha (z)=\widetilde H( z/|z|, |z|)$ for $z\in   
 \overline \D\setminus \{0\}$. Then  $\tilde \alpha$ is continuous on 
 $\overline \D\setminus \{0\}$, where it satisfies $P_k\circ \tilde \alpha=\alpha$. Since $\alpha(0)=0$, this last equation  implies that we get a continuous extension 
 of $\tilde \alpha$ to $\overline \D$ by setting $\tilde \alpha(0)=0$. This extension is a lift $\tilde \alpha$ of $\alpha$. Note that  $\tilde \alpha^{-1}(0)=0$ and  
 $\tilde \alpha|_{\partial\D}=\widetilde H(\cdot, 1)=\beta$. Moreover, $\tilde \alpha$ is holomorphic on $\D$, because it is a continuous branch of the $k$-th root of the holomorphic function
 $\alpha$ on $\D$. This shows   that $\tilde \alpha$ has  the desired properties. The uniqueness of $\tilde \alpha$  easily follows from the uniqueness of $\widetilde H$.   
 
We have $\beta= \tilde \alpha|_{\partial \D}$;  so  if $\beta$  is a homeomorphism,
then the argument principle implies that $\tilde \alpha$ is a conformal homeomorphism of 
$\overline \D$ onto  $\overline \D$. 
 \end{proof}

\section{The conformal elevator for subhyperbolic maps}
\label{s:confelev} 
\no
A rational map $f$ is called {\em subhyperbolic} if each critical point of $f$ in $\Ju(f)$ has a finite orbit while each critical point  in $\Fa(f)$ has an orbit that  converges to  an attracting or superattracting cycle of  $f$. 
The map $f$ is called {\em hyperbolic} if it is subhyperbolic and $f$ does not have critical points in $\Ju(f)$. Note that every postcritically-finite rational map is subhyperbolic.

For the rest of  this section,  we will assume that $f$ is a subhyperbolic rational map with $\Ju(f)\ne \hC$. 
Moreover, we will make the following additional assumption:
\begin{equation}\label{eq:addinv}
\Ju(f)\sub \tfrac 12 \D\quad \text{and}\quad f^{-1}( \D)\sub  \D. 
\end{equation}
Here and in what follows, if $B$ is a disk, we denote by $\frac12 B$
the disk with the same center and whose radius is half the radius of $B$. 

The inclusions \eqref{eq:addinv} can always be achieved by conjugating $f$ 
with an appropriate M\"obius transformation so that $\Ju(f)\sub \tfrac 12\D$ and $\infty$ is an attracting or superattracting 
periodic point of $f$. If we then replace $f$ with suitable iterate, we may in addition 
assume that $\infty$ becomes an attracting or superattracting  fixed point of $f$ with 
$f(\hC\setminus \D)\sub\hC\setminus \D$. The latter inclusion is equivalent to 
$f^{-1}( \D)\sub  \D$. 


Every small disk  $B$ centered at a point in $\Ju(f)$ can be ``blown up" by a carefully chosen iterate $f^n$  
to a definite size with good control on  how sets  are distorted under the map $f^n$. We will discuss this in detail as a preparation for the proofs of  Theorems~\ref{thm:circgeom} and \ref{thm:main2}, and will  refer to this procedure as applying the {\em conformal elevator} to $B$.  In the following, all metric notions refer to the Euclidean metric on $\C$.

Let $P\sub \hC$ denote the union of all superattracting or attracting cycles of $f$. 
This is a non-empty and finite set contained in $ \Fa(f)$.  Since $f$ is subhyperbolic, every critical point in $\Ju(f)$ has a finite orbit, and every critical point in $\C \setminus \Ju(f)$ has an orbit that converges to $P$. 
Hence there exists a neighborhood of $\Ju(f)$ that  contains only finitely many points in $\post (f)$ and no points in $\post^c(f)$. This implies that we can choose  $\eps_0>0$ so  small that  
$\diam(\Ju(f))> 2\eps_0$, and so that 
every disk $B'=B(q,r')$  centered at a point $q\in \Ju(f)$ with positive radius 
$r'\le 8\eps_0$ is contained in $\D$, contains no point in $\post^c(f)$ and at most one point in 
$\post (f)$.

Let $B=B(p,r)$ be a small  disk centered at a point $p\in \Ju(f)$ and of positive radius $r<\eps_0$. Since $B$ is centered at a point in $\Ju(f)$, we have $\Ju(f)\sub f^n(B)$ for sufficiently large $n$ (see \cite[p.~69, Theorem~4.2.5~(ii)]{Be}), and so the images of $B$ under iterates will eventually have  diameter $>2\eps_0$. Hence there exists a maximal number $n\in \N_0$ such that $f^n(B)$ is contained in the disk of radius $\eps_0$ centered at a point $\tilde q\in \Ju(f)$. 

 If $B(\tilde q, 2\eps_0)\cap \post (f)=\emptyset$, we define $q=\tilde q$ and 
$B'=B(q, 2\eps_0)$. 
Otherwise, there exists a unique point $ q\in B(\tilde q, 2\eps_0)\cap \post(f)$.
Then we define 
$$B':=B( q, 8\eps_0)\supset B(q, 4\eps_0)\supset  B(\tilde q, 2\eps_0)\supset  f^n(B). $$
In both  cases,  we have 

\begin{itemize}

\smallskip 
\item[{(i)}]
$f^n(B)\sub \frac 12B'\sub \D$, 

\smallskip 
\item[{(ii)}]
$B'\cap \post^c(f)=\emptyset$, 

\smallskip 
\item[{(iii)}]
  $\#(B'\cap \post(f))\le 1$ with equality only if $B'$ is centered at a point in $\post(f)$. 
\end{itemize}
 
By definition of $n$, the set $f^{n+1}(B)$ must have diameter $\ge \eps_0$. Hence by uniform continuity of $f$ near $\Ju(f)$ there exists $\delta_0>0$ independent of $B$ such that 
\begin{itemize}

\smallskip 
\item[{(iv)}] $\diam (f^n(B))\ge \delta_0$. 

\end{itemize}

Let $\Om\sub \hC$ be the unique component of $f^{-n}(B')$ that contains 
$B$. Then by Lemma~\ref{lem:deg} and by \eqref{eq:addinv}, 
\begin{itemize}

\smallskip 
\item[{(v)}]
$\Om$ is simply connected,  and $B\sub \Om \sub \D$,

\smallskip 
\item[{(vi)}]
the map $f^n|_{\Om}\: \Om \ra B'$ is proper, 

\smallskip 
\item[{(vii)}]
there exists $k\in \N$,  and   conformal maps
$\varphi\: B'\ra \D$ and $\psi\: \Om\ra \D$ such that 
$$(\varphi\circ f^n\circ \psi^{-1})(z)= z^k$$ 
for all $z\in \D$. Here $k\in \N$ is uniformly bounded independent of $B$. 
\end{itemize}

If $k\ge 2$, then $q=\varphi^{-1}(0)\in \post(f)\cap B'$, and so $q$ is the center of $B'$. If $k=1$, then we can choose $\varphi$ so that this is also the case.
So 

\begin{itemize}

\smallskip 
\item[{(viii)}] $\varphi$ maps the center $q$ of $B'$ to $0$. 

\end{itemize}

We refer to the choice of  $f^n$  and  the associated sets   $B'$ and  $\Om$ and the maps
 $\varphi$ and $\psi$
satisfying properties (i)--(viii) as {\em applying the  conformal elevator} to $B$.

\begin{lemma} \label{lem:dist}
There exist constants $\gamma,r_1>0$ and $C_1,C_2,C_3\ge 1$
 independent of $B=B(p,r)$ 
with the following properties:

\begin{itemize} 

\smallskip 
\item[{(a)}] If $A\sub B$ is a connected set, then
$$\frac{\diam(A)}{\diam(B)}\le C_1 \diam(f^n(A))^{\gamma}. $$

\smallskip 
\item[{(b)}] $B(f^n(p), r_1)\sub f^n(\frac 12 B)\sub f^n(B)$.

\smallskip 
\item[{(c)}] If $u,v\in B$, then $$|f^n(u)- f^n(v)|\le C_2\frac{|u-v|}{\diam(B)}. $$ 

\smallskip 
\item[{(d)}] If $u,v\in B$, $u\ne v$,  and $f^n(u)=f^n(v)$, then  the center $q$ of $B'$ belongs to $\post(f)$ and we have $$|f^n(u)- q| \le C_3
\frac{|u-v|}{\diam(B)}. $$ 

\end{itemize} 
\end{lemma}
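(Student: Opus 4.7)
The plan is to exploit the factorization $f^n = \varphi^{-1} \circ P_k \circ \psi$ from property (vii). Since $B' = B(q, R)$ is a Euclidean disk with $\varphi(q) = 0$, the conformal map $\varphi \colon B' \to \D$ is affine of the form $z \mapsto e^{i\theta}(z-q)/R$, so $\varphi^{-1}$ simply rescales by $R$. Property (i) forces $\psi(B) \subset P_k^{-1}(B(0, 1/2)) \subset B(0, \rho_0)$ with $\rho_0 := 2^{-1/N} < 1$. Applying Koebe's distortion theorem to the univalent map $\psi^{-1} \colon \D \to \Omega$ on the subdisk $B(0, \rho_0)$ gives $|(\psi^{-1})'(z)| \asymp \lambda := 1/|\psi'(p)|$ there, and since $B$ and $B(0, \rho_0)$ are convex, integrating along line segments yields $|u-v| \asymp \lambda|\psi(u) - \psi(v)|$ for $u, v \in B$; in particular $\diam(\psi(B)) \asymp \diam(B)/\lambda =: 2\mu$. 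The geometric crux is that $\mu \asymp 1$: the upper bound $\mu \lesssim \rho_0$ comes from $\psi(B) \subset B(0, \rho_0)$, while property (iv) combined with $\diam(f^n(B)) = R\diam(P_k(\psi(B))) \le Rk\rho_0^{k-1}\diam(\psi(B))$ gives $\mu \gtrsim \delta_0/R$.

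With $\mu \asymp 1$ in hand, parts (c) and (d) are immediate. For (c), $|f^n(u)-f^n(v)| = R|\psi(u)^k - \psi(v)^k| \le Rk\rho_0^{k-1}|\psi(u)-\psi(v)| \lesssim |u-v|/\lambda \asymp |u-v|/\diam(B)$. For (d), $u \ne v$ with $f^n(u) = f^n(v)$ forces $k \ge 2$ (so $q \in \post(f)$ by property (iii)) and $\psi(u) = \zeta\psi(v)$ for a nontrivial $k$-th root of unity $\zeta$; then $|\psi(v)| \le |\psi(u)-\psi(v)|/(2\sin(\pi/N))$ and $|f^n(u) - q| = R|\psi(v)|^k \le R\rho_0^{k-1}|\psi(v)| \lesssim |u-v|/\diam(B)$. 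For (b), Koebe's $\frac{1}{4}$ theorem applied to $\psi|_B$ yields $\psi(\frac{1}{2} B) \supset B(\psi(p), c_0\mu)$ for some absolute $c_0 > 0$, and an elementary case split produces $P_k(B(z_0, \rho)) \supset B(z_0^k, c_N\rho^k)$ uniformly in $z_0, \rho$ (when $|z_0| \ge \rho/2$, apply Koebe's $\frac{1}{4}$ theorem to $P_k$ on $B(z_0, |z_0|/2)$, which avoids $0$; when $|z_0| < \rho/4$, note $B(z_0, \rho) \supset B(0, 3\rho/4)$ and $P_k$ maps this to $B(0, (3\rho/4)^k)$, leaving slack $\gtrsim \rho^k$ around $z_0^k$). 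Rescaling by $R$ yields $f^n(\frac{1}{2} B) \supset B(f^n(p), Rc_N(c_0\mu)^k) \supset B(f^n(p), r_1)$ with uniform $r_1$.

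Part (a), the most delicate, reduces to the H\"older-type bound: for every connected $\widetilde A \subset B(0, \rho_0)$, $\diam(\widetilde A) \le C_N \diam(P_k(\widetilde A))^{1/k}$. Setting $\delta := \diam(P_k(\widetilde A))$, I split into two cases. If every $z \in \widetilde A$ satisfies $|z| \le 2\delta^{1/k}$, then trivially $\diam(\widetilde A) \le 4\delta^{1/k}$. Otherwise there exists $z^* \in \widetilde A$ with $|z^*| > 2\delta^{1/k}$; the bound $|z^k - (z^*)^k| \le \delta < 2^{-k}|z^*|^k$ then forces $|z| \ge |z^*|/2$ for every $z \in \widetilde A$, so $\widetilde A \subset P_k^{-1}(\overline{B((z^*)^k, \delta)})$, which, since $|(z^*)^k| > 2^k\delta$, splits into $k$ disjoint sheets of the unbranched covering $P_k \colon \C\setminus\{0\} \to \C\setminus\{0\}$. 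Connectedness of $\widetilde A$ places it inside a single sheet, and Koebe distortion applied to $P_k$ on $B(z_j, |z_j|/2)$ for a $k$-th root $z_j$ of $(z^*)^k$ (with $|z_j| = |z^*|$) bounds that sheet's diameter by $\lesssim \delta/(k|z^*|^{k-1}) \lesssim \delta^{1/k}$. Substituting $\widetilde A = \psi(A)$, and using both $\diam(A)/\diam(B) \asymp \diam(\psi(A))/\mu \asymp \diam(\psi(A))$ and $\diam(f^n(A)) \le 2R \le 1$, one gets $\diam(A)/\diam(B) \lesssim \diam(f^n(A))^{1/k} \le \diam(f^n(A))^{1/N}$, so $\gamma = 1/N$ works. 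The main obstacle is this H\"older estimate, which must carefully exploit the connectedness of $\widetilde A$ together with the branching of $P_k$ at $0$; the same branching structure also drives the case analyses in (b) and (d) and is the reason an exponent strictly less than $1$ is needed in (a).
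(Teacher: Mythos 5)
Your proof follows essentially the same route as the paper: conjugate $f^n$ to $P_k$ via the conformal elevator maps $\varphi$ and $\psi$, use Koebe distortion on $\psi^{-1}$ together with property (iv) to get $\diam(\psi(B))\approx 1$ and hence $|u-v|/\diam(B)\approx|\psi(u)-\psi(v)|$, and then deduce (a)--(d) from distortion properties of $P_k$ (whose proofs the paper leaves to the reader but you supply). The only quibbles are in those supplied details: your case split in (b) omits the range $\rho/4\le|z_0|<\rho/2$, and $P_k$ need not be univalent on $B(z_0,|z_0|/2)$ for large $k$ (so Koebe's $\tfrac14$-theorem does not directly apply there), but both are harmlessly repaired by shrinking to $B(z_0,|z_0|\sin(\pi/N)/2)$ since $k\le N$ is uniformly bounded.
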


So (a)  says that a connected set $A\sub B$  comparable in diameter to $B$  
is blown up to a definite size under the conformal elevator, and by (b) the image of $B$ contains a disk of a definite size. If we consider the maps  $f^n|_B$ for different $B$, then by (c) they  are uniformly Lipschitz if we rescale distances in $B$ by $1/\diam(B)$. In (d) the center $q$ of $B'$ must be a point in $\post(f)$ for otherwise $f^n$ would be injective;  so  (d) says that if  distinct, but nearby points are mapped to the same image  $w$ under $f^n$, then a postcritical point must be close to this image $w$.

\begin{proof} In the following we write $a\lesssim b$ or $a\gtrsim b$ for two quantities $a,b\ge 0$ if we can find a constant $C>0$ independent of the disk $B$ such that $a\le C b$ or $Ca\ge b$, respectively. We write $a\approx b$ if we have both $a\lesssim b$ and $a\gtrsim b$, and in this case say that the quantities $a$ and $b$ are {\em comparable}.

We consider the conformal maps $\varphi\: B'\ra \D$ and 
$\psi\: \Om\ra\D$ satisfying  properties (vii) and (viii) of the conformal elevator as discussed above.  The exponent $k$ in (vii) is uniformly bounded, say $k\le N$, where $N\in \N$ is independent of $B$. As before, we use the notation $P_k(z)=z^k$ for $z\in \D$.  

As we will see, the properties (a)--(d) easily follow from distortion properties of the map $P_k$. We discuss the relevant  properties of $P_k$ first (the proof is left to the reader).
The map $P_k$ is Lipschitz with uniformly bounded Lipschitz constant, because $k$ is
uniformly bounded.
If $M\sub \D$ is connected, then 
$$ \diam (P_k(M))\gtrsim \diam(M)^k\gtrsim \diam(M)^N. $$
Moreover,  if $B(z, r_0)\sub \D$, then 
$$ B(P_k(z), r_1)\sub P_k(B(z, r_0)), $$ 
where $r_1 \gtrsim r_0^k \ge  r_0^N$. 

 By (vii) the map $\varphi$ is a Euclidean similarity, and so 
$\varphi(\frac 12 B')=\frac 12 \D$. 
Since the radius of $B'$ is  equal to $2\eps_0$ or $8\eps_0$, and hence comparable to $1$, we 
have 
\begin{equation} 
|\varphi(u')-\varphi(v')|\approx |u'-v'|, 
\end{equation}
whenever $u',v'\in B'$. 

 Moreover,  for  $\rho :=2^{-1/N}\in  (0,1)$ (which is independent of $B$)  we have  
 $$D:=B(0,\rho)\supseteq  P_k^{-1}( \tfrac 12 \D)=P_k^{-1}(\varphi(\tfrac 12 B'))=
 \psi(f^{-n}(\tfrac 12 B')\cap \Om). $$ Since 
 $f^n(B)\sub \frac 12 B'$  by 
 (i) and $B\sub \Om$ by (v), we then have $\psi(B) \sub D$. 
 
 So if  $u,v\in B$, then $\psi (u), \psi(v)\in D$. Hence by  the Koebe distortion theorem we have 
 $$|u-v| \approx |(\psi^{-1})'(0)|\cdot |\psi(u)-\psi(v)|$$
 whenever $u,v\in B$. 
 In particular, 
 $$ \diam(B)\approx  |(\psi^{-1})'(0)|\cdot \diam (\psi(B)) $$
 On the other hand, by (iv)
 \begin{align*}
 1&\approx\diam (f^n(B))\approx  \diam \big(\varphi (f^n(B))\big) \\ &=    \diam \big(P_k(\psi(B))\big) \lesssim \diam (\psi(B))\le 2. 
 \end{align*}
 Hence $\diam (\psi(B)) \approx 1$,   and so $\diam(B)\approx  |(\psi^{-1})'(0)|$.
 This implies that 
 \begin{equation}\label{eq:adKoe}
 \frac { |u-v|}{\diam(B)}\approx |\psi(u)-\psi(v)|, 
 \end{equation}
 whenever $u,v\in B$. 
 
 Now let $A\sub B$ be connected. Then $\psi(A)$ is connected, which implies 
  \begin{align*}
 \frac { \diam(A)}{\diam(B)}&\approx \diam(\psi(A))\lesssim     \diam \big(P_k( \psi(A))\big)^{1/N} \\ & =  \diam \big(\varphi(f^n(A))\big)^{1/N} \approx \diam (f^n(A))^{1/N}. 
 \end{align*}
 Inequality (a) follows.
 
It follows from  \eqref{eq:adKoe}  that 
there exists $r_0>0$ independent of $B$ such that 
$ B(\psi(p), r_0))\sub \psi(\frac 12 B)$. By the distortion property of $P_k$ mentioned in the beginning of the proof, $\varphi(f^n(\frac 12 B))=P_k(\psi(\frac 12 B))$ then contains a disk 
$B(P_k(\psi(p)), r_1)$ with $r_1>0$ independent of $B$. Since $\varphi(f^n(p))=P_k(\psi(p))$, and $\varphi$  distorts distances uniformly,  statement (b) follows. 

For (c) note that if $u,v\in B$, then 
 \begin{align*}
|f^n(u)-f^n(v)|&\approx |\varphi(f^n(u))-\varphi(f^n(v))|=   |P_k(\psi(u))-P_k(\psi(v))|\\
&\lesssim    |\psi(u)-\psi(v)| \approx  \frac { |u-v|}{\diam(B)}.   \end{align*}
We used that $P_k$ is Lipschitz on $\D$ with a uniform Lipschitz constant.

Finally we prove (d). If $u,v\in B$, $u\ne v$, and $f^n(u)=f^n(v)$, then $f^n$ is not injective on $B'$, and so the center $q$ of $B'$ belongs to $\post(f)$. Moreover, we then have
$\psi(u)^k=\psi(v)^k$, but $\psi(u)\ne \psi(v)$. This implies that 
$$ |\psi(u)-\psi(v)|\gtrsim \frac 1k |\psi(u)| \approx |\psi(u)|. $$
It follows that 
\begin{align*}
|f^n(u)-q|&\approx |\varphi(f^n(u))-\varphi(q)|= |\psi(u)^k|\\
&\le |\psi(u)| \lesssim    |\psi(u)-\psi(v)| \approx  \frac { |u-v|}{\diam(B)}.   \end{align*}

\end{proof}

\section{Geometry of the peripheral circles}
\label{s:geomper} 
\no In this section we will prove Theorem~\ref{thm:circgeom}. We have already defined in Section~\ref{ss:qc} what it means for the peripheral circles of a Sierpi\'nski carpet $S$ to be uniform  quasicircles and to be uniformly relatively separated. We say that 
the 
peripheral circles of  $S$  {\em  occur on all locations and scales} if  there exists a constant $C\geq 1$ such that for every $p\in S$ and every $0<r\leq {\rm diam}(\hC)=2$, there exists a peripheral circle $J$ of $S$ with  $B(p,r)\cap J\ne 0$ and 
$$
r/C\leq{\rm diam}(J)\leq C r.
$$ 
Here and below the  metric notions refer to the chordal metric
$\sigma$  on $\hC$.

A set $M\sub \hC$ is called {\em porous} if there exists a constant $c>0$ such that 
for every  $p\in S$ and every $0<r\leq 2$ there exists a point $q\in B(p,r)$ such that $B(q,cr)\sub \hC\setminus M$. 

Before we turn to the proof of Theorem~\ref{thm:circgeom}, we require an auxiliary fact.

\begin{lemma}\label{L:PerC} Let $f$ be a rational map such that $\Ju(f)$ is a Sier\-pi\'n\-ski carpet, and  let $J$ 
be a peripheral circle of $\Ju(f)$. Then  $f^n(J)$ is  a peripheral circle of $\Ju(f)$, and $f^{-n}(J)$ is a union of finitely many peripheral circles of $\Ju(f)$ for each $n\in \N$.
Moreover, $J\cap \post(f)=\emptyset=J\cap \crit(f)$.
%
%
%
\end{lemma}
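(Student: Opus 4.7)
The plan is to proceed in three stages, handling forward/backward iteration first, then the critical-point statement, and finally the postcritical statement.

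For the forward and backward claims, induct on $n$. Writing $J=\partial D$ where $D$ is a complementary Jordan region of $\Ju(f)$ (hence a Fatou component), the rational map $f$ sends $D$ properly onto another Fatou component $D'=f(D)$, which is again a Jordan region by the Sierpi\'nski-carpet hypothesis. Since $f|_D\:D\to D'$ is a proper holomorphic map of Jordan domains, its continuous extension to $\overline D$ sends $\partial D$ onto $\partial D'$, so $f(J)=\partial D'$ is a peripheral circle. For the preimage, the components of $f^{-1}(D)$ are finitely many Fatou components $D_{k_1},\dots,D_{k_m}$, each mapped properly onto $D$. An open-mapping argument gives $f^{-1}(\overline D)=\overline{f^{-1}(D)}=\bigcup_j \overline{D_{k_j}}$; subtracting $f^{-1}(D)=\bigcup_j D_{k_j}$ and using the pairwise disjointness of the closures (which follows from $D_k\cap D_l=\emptyset$ and $\partial D_k\cap\partial D_l=\emptyset$ for $k\ne l$) yields $f^{-1}(\partial D)=\bigcup_j \partial D_{k_j}$. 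Iterating handles arbitrary $n$ for both claims.

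For $J\cap\crit(f)=\emptyset$, assume for contradiction that $c\in J\cap\crit(f)$ has local degree $d\ge 2$, and choose local coordinates in which $f$ is conjugate to $z\mapsto z^d$ with $c$ and $f(c)$ both corresponding to $0$. Pick a small disk $B$ about $c$ so that $f|_B\:B\to V:=f(B)$ is the $d$-fold branched cover branched only at $c$. Only finitely many Fatou components $D_l$ satisfy $f(D_l)=D'=f(D)$, and for any such $D_l\ne D$ the disjointness of the closures of the complementary Jordan regions forces $c\notin\overline{D_l}$; shrinking $B$ we may arrange $B\cap\overline{D_l}=\emptyset$ for every such $D_l\ne D$. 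Consequently $f^{-1}(D')\cap B=D\cap B$, and the same open-mapping argument yields
\[
f^{-1}(\partial D')\cap B \;=\; J\cap B.
\]
Shrinking $V$ further, let $\alpha$ be the connected component of $\partial D'\cap V$ through $f(c)$, which is a Jordan arc. Since $f|_{B\setminus\{c\}}\:B\setminus\{c\}\to V\setminus\{f(c)\}$ is an unbranched $d$-sheeted cover, each of the two components of $\alpha\setminus\{f(c)\}$ lifts to $d$ disjoint arcs in $B\setminus\{c\}$, each accumulating at $c$. Hence $f^{-1}(\alpha)\cap B$ near $c$ consists of $2d$ Jordan arcs that meet pairwise only at $c$, and all are contained in $J\cap B$. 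Parametrize $J$ by a homeomorphism $\gamma\:\Sph^1\to J$ with $\gamma(t_0)=c$; each of the $2d$ arcs corresponds under $\gamma^{-1}$ to a subarc of $\Sph^1$ starting at $t_0$ and extending in one of the two directions, and any two such subarcs extending in the same direction are necessarily nested, so they share more than just $t_0$. This contradicts pairwise disjointness away from $c$ unless at most $2$ of the $2d$ arcs exist, forcing $2d\le 2$ and contradicting $d\ge 2$.

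Finally, if $c\in J\cap\post(f)$, write $c=f^n(c')$ with $c'\in\crit(f)$ and $n\ge 1$. By the preimage statement, $c'\in f^{-n}(J)$ lies in a finite union of peripheral circles $\partial D_l$, so $c'$ lies on some peripheral circle; applying the previous paragraph to that circle then forbids $c'\in\crit(f)$, a contradiction. The main obstacle is the topological branch-counting argument in the second paragraph: one must combine the local $z^d$-model for $f$ at the hypothetical critical point with the Jordan-curve structure of $J$ (via the circle parametrization) to extract a combinatorial contradiction between the $2d$ local branches of $f^{-1}(\alpha)$ at $c$ and the at-most-$2$ independent arcs that a Jordan curve can emit from a single point.
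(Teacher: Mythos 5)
Your proof is correct and follows essentially the same route as the paper: identify peripheral circles with boundaries of Fatou components and use properness of $f$ on Fatou components for the image/preimage claims, then rule out critical and postcritical points on $J$ via the local branching obstruction. The only difference is that you prove $J\cap\crit(f)=\emptyset$ directly (spelling out the $2d$-branch count that the paper leaves implicit in its remark that $f^{-n}(J)$ cannot be a $1$-manifold at a branch point) and then deduce $J\cap\post(f)=\emptyset$ from it, whereas the paper argues in the opposite order.
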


\begin{proof} 
 There exists precisely one Fatou component $U$ of $f$  such that $\partial U=J$. Then $V=f^n(U)$ is also a Fatou component  of $f$. Hence $\partial V$ is a peripheral circle of $\Ju(f)$. The map $f^n|_U\: U\ra V$ is proper which implies that 
 $f^n(J)=f^n(\partial U)=\partial V$. Similarly, there are finitely many distinct Fatou components $V_1, \dots, V_k$ of $f$ such that 
$$ f^{-n}(U)=V_1\cup \dots \cup V_k. $$
Then 
$$f^{-n}(J)=\partial V_1\cup \dots \cup \partial V_k, $$ 
and so the preimage of $J$ under $f^n$  consists of the finitely many disjoint Jordan curves 
$\partial V_i$, $i=1, \dots, k$,  which are peripheral circles of $\Ju(f)$.

To show $J\cap \post(f)=\emptyset$, we argue by contradiction, and assume that there exists 
a point $p\in \post(f)\cap J$. Then there exists $n\in \N$, and $c\in \crit(f)$ such that 
$f^n(c)=p$. As we have just seen,   the preimage of $J$ under $f^n$  consists of finitely many disjoint Jordan curves, and is hence a topological $1$-manifold.  On the other hand, since $c\in f^{-n}(p)\sub f^{-n}(J)$ is a critical point  of $f$ and hence of $f^n$, at  $c$ the set 
$f^{-n}(J)$ cannot be a $1$-manifold. This is a contradiction. 

Finally, suppose that $c\in J\cap \crit(f)$. Then $f(c)\in \post(f)\cap f(J)$, and 
$f(J)$  is a peripheral circle of $\Ju(f)$. This is impossible by what we have just seen. 
\end{proof} 

\begin{proof}[Proof of Theorem~\ref{thm:circgeom}.] A general idea for the proof is to argue by contradiction, and get locations  where the desired statements fail quantitatively in a worse and worse manner. One can then use the dynamics to blow up to a global scale and derive a contradiction from topological facts. It is fairly easy to implement this idea if we have expanding  dynamics  given by a group (see, for example,  \cite[Proposition~1.4]{Bo}). In the present case, one applies the conformal elevator and the estimates as given by Lemma~\ref{lem:dist}.
We now provide the details.  

We can pass to iterates of the map $f$, and also conjugate $f$ by a M\"obius transformation
as properties that we want to establish are M\"obius invariant. This M\"obius invariance 
is explicitly stated for peripheral circles to be uniform quasicircles and to be uniformly 
relatively separated in \cite[Corollary 4.7]{Bo}. The M\"obius invariance of the other stated  properties  immediately follows from the fact that 
each M\"obius transformation is  bi-Lipschitz with respect to the chordal metric.
In this way, we may assume that  \eqref{eq:addinv} is true. Then the peripheral circles are subsets of $\D$, where chordal and Euclidean metric are comparable. Therefore, we can use the Euclidean metric, and all metric notions will refer to this metric in the following.  

\medskip
{\em Part I.} To show  that  peripheral circles of $ \Ju(f)$ are uniform quasicircles, we argue by contradiction. 
Then for each $k\in \N$ there exists a  peripheral circle $J_k$ of $\Ju(f)$, and  distinct points $u_k, v_k\in J_k$ such that if $\alpha_k,
\beta_k$ are the two subarcs of $J_k$ with endpoints $u_k$ and $v_k$, then 
\begin {equation}\label{eq:userel}
 \frac {\min\{\diam (\alpha_k), \diam (\beta_k)\}} {|u_k-v_k|} \to \infty
 \end{equation} 
as $k\to \infty$.
We can pick $r_k>0$ such that 
\begin{equation} \label{eq:diam/r}\min\{\diam (\alpha_k), \diam (\beta_k)\}/r_k\to \infty
\end{equation} 
and 
\begin{equation} \label{eq:dist/r} |u_k-v_k|/r_k\to 0
\end{equation} 
as $k\to \infty.$
We now apply the conformal elevator to $B_k:=B(u_k, r_k)$. Let $f^{n_k}$ be the corresponding iterate and $B_k'$ be the ball as discussed in Section~\ref{s:confelev}. 
Define $J'_k=f^{n_k}(J_k)$,  $u'_k=f^{n_k}(u_k)$, and $v'_k=f^{n_k}(v_k)$.
Then  Lemma~\ref{lem:dist}~(a) and \eqref{eq:diam/r} imply that the diameters of the sets 
 $J'_k$
are uniformly bounded away from $0$ independently of $k$. 
Since $J_k'$ is a peripheral circle of the Sierpi\'nski carpet $\Ju(f)$ by Lemma~\ref{L:PerC}, there are only finitely many possibilities for the set $J'_k$. By passing to suitable subsequence if necessary, we may assume that $J'=J'_k$ is a fixed peripheral circle of  $\Ju(f)$ independent of $k$.    
The points $u'_k,v'_k$ lie in $J'$ and  by \eqref{eq:dist/r} and  Lemma~\ref{lem:dist}~(c) we have 
\begin{equation}\label{eq:diamto0}
|u'_k-v'_k|\to 0
\end{equation} 
as $k\to \infty$. 

For large $k$ we want to find a point $w_k\ne u_k$ in $B_k$ near $u_k$ with 
$f^{n_k}(u_k)=f^{n_k}(w_k)$. If $u'_k=v'_k$ we can take $w_k=v_k$. Otherwise, if $u'_k\ne v'_k$,  there are two subarcs of $J'$ with endpoints 
$u'_k$ and $v'_k$. Let  $\ga'_k\sub J'$ be the one with smaller diameter.
Then by \eqref{eq:diamto0} we have
\begin{equation}\label{eq:ga_kprimeto0}
\diam(\ga'_k)\to 0
\end{equation}
as $k\to \infty$ (for the moment we only consider such  $k$ for which $\ga'_k$ is defined).

Since $J'\cap \post(f)=\emptyset$ by Lemma~\ref{L:PerC}, the map $f^{n_k}\: J_k \ra J'$ is a covering map. So we can lift the arc  $\ga'_k$ under $f^{n_k}$ to a subarc $\ga_k$ 
of $J_k$ with initial point $v_k$ and $f^{n_k}(\ga_k)=\ga'_k$.  By Lemma~\ref{lem:dist}~(b) we have  $\ga'_k\sub f^{n_k} (B_k)$ for large $k$;  then  Lemma~\ref{lem:dist}~(a) implies that    $\ga_k\sub B_k$ for large $k$, and also 
\begin{equation}\label{eq:ga_kto0}
\diam(\ga_k)/r_k\to 0
\end{equation}
 as $k\to \infty$.  Note that if $w_k$ is the other endpoint of $\ga_k$, then $f^{n_k}(w_k)=u'_k$.  We have $w_k\ne u_k$ for large $k$;
  for if $w_k=u_k$, then $\ga_k\sub J_k$ has the endpoints 
  $u_k$ and $v_k$ and so must agree with one of the arcs $\alpha_k$ or $\beta_k$; but for large $k$ this is impossible by \eqref{eq:diam/r}
  and \eqref{eq:ga_kto0}. In addition, we  have 
  $$|u_k-w_k|/r_k\le |u_k-v_k|/r_k+\diam(\ga_k)/r_k\to 0$$
  as $k\to \infty$. Note that this is also true if $w_k=v_k$.
  
  In summary, for each large $k$ we can find a point $w_k\in B_k$ with 
  $w_k\ne u_k$, $f^{n_k}(u_k)=f^{n_k}(w_k)$, and 
  \begin{equation} \label{eq:distukwk}
  |u_k-w_k|/r_k\to 0
  \end{equation}
   as 
  $k\to \infty$.
  
  Then by Lemma~\ref{lem:dist}~(d)  the center $q_k$ of $B'_k$ must belong to the 
postcritical set of $f$ and 
$$\dist(J', \post(f))\le  |u'_k- q_k|\to 0$$
as $k\to \infty$. Since $f$ is subhyperbolic, every sufficiently small neighborhood of $\Ju(f) \supseteq J'$ contains only finitely many points in $\post(f)$, and 
so this implies $J'\cap \post(f)\ne \emptyset$.  We know that this is impossible by Lemma~\ref{L:PerC} and so we  get a contradiction. This shows that the peripheral circles are uniform quasicircles.

\medskip
{\em Part~II.}
The proof that the peripheral circles of $\Ju(f)$ are uniformly relatively separated runs along almost identical lines. Again we argue by contradiction.
Then for $k\in \N$ we can find distinct peripheral circles $\alpha_k$ and $\beta_k$ of $\Ju(f)$,  and points $u_k\in \alpha_k$, $v_k\in \beta_k$ such that  
\eqref{eq:userel} is valid. We can again pick $r_k>0$ so that the relations 
\eqref{eq:diam/r} and \eqref{eq:dist/r} are true. As before we define $B_k=B(u_k, r_k)$ and apply the conformal elevator to $B_k$ which gives us suitable iterate $f^{n_k}$ and a ball $B'_k$. By Lemma~\ref{lem:dist}~(a) the images of 
$\alpha_k$ and $\beta_k$ under $f^{n_k}$ are blown up to a definite size.
Since there are only finitely many peripheral circles of $\Ju(f)$ whose diameter exceeds a given constant, only finitely many such image pairs can arise.
By passing to a suitable subsequence if necessary, we may assume that 
$\alpha=f^{n_k}(\alpha_k)$ and $\beta=f^{n_k}(\alpha_k)$ are peripheral circles independent of $k$. 

We define  $u'_k:=f^{n_k}(u_k)\in \alpha$ and $v'_k:=f^{n_k}(v_k)\in \beta$. Then again the relation \eqref{eq:diamto0} holds. This is only possible 
if $\alpha\cap \beta\ne \emptyset$, and so $\alpha=\beta$. 

Again for large $k$ we want to find a point $w_k\ne u_k$ in $B_k$ near $u_k$ with 
$f^{n_k}(u_k)=f^{n_k}(w_k)$. If $u'_k=v'_k$ we can take $w_k:=v_k$. 
Otherwise, if $u'_k\ne v'_k$,  we let $\ga'_k$ be the subarc of 
$\alpha=\beta$ with endpoints $u'_k $ and  $v'_k$ and smaller diameter.
Then 
we can lift $\ga'_k$ to a subarc $\ga_k\sub \beta_k$ with initial point $v_k$ 
such that $f^{n_k}(\ga_k)=\ga'_k$, and  we have \eqref{eq:ga_kto0}. If $w_k\in \beta_k$ is the other endpoint of 
$\ga_k$, then $f^{n_k}(w_k)=u'_k=f^{n_k}(u_k)$, and $w_k\ne u_k$, because these points lie in the disjoint sets $\beta_k$ and $\alpha_k$, respectively.  Again we have \eqref{eq:distukwk},  which implies that the center $q_k$ of $B_k'$ belongs to $\post(f)$, and leads 
to $\dist(\alpha, \post(f))=0$. We know that this is impossible by Lemma~\ref{L:PerC}.  

\medskip
{\em Part~III.} We will show that peripheral circles of $\Ju(f)$ appear on all locations and scales. 


Let $p\in \Ju(f)$ and $r>0$ be arbitrary, and define $B=B(p,r)$. We may assume that $r$ is small, because by a simple compactness argument one can show that 
 disks of definite, but not too large Euclidean size contain  peripheral circles of comparable diameter. 
  
 We now apply the conformal elevator to $B$ to obtain  an iterate $f^n$.
Lemma~\ref{lem:dist}~(b) implies that there exists a fixed constant $r_1>0$ independent of $B$ such 
that $B(f^n(p), r_1)\sub f^n(\frac 12 B)$. By part (a) of the same lemma, we can also find a constant 
$c_1>0$ independent of $B$ with the following property: if $A$ is a connected set with 
$A\cap B(p, r/2)\ne \emptyset$ and $\diam(f^n(A))\le c_1$, then $A\sub B$.

We can now find a peripheral circle $J'$ of $\Ju(f)$ such that 
$J'\cap f^n(\frac12 B)\ne \emptyset$ and $0<c_0 < \diam(J')<c_1$, where $c_0$ is another positive constant independent of $B$. This easily follows from a compactness argument based on the fact that $f^n(\frac12 B)$ contains  a disk of a definite size that is centered at a point in $\Ju(f)$. 

The  preimage $f^{-n}(J')$ consists of finitely many components that are peripheral circles of $\Ju(f)$. One of these peripheral circles $J$ meets $\frac12 B$. Since $\diam(f^n(J))=\diam(J')<c_1$, by the  choice of $c_1$ we then have $J\sub B$, and so $\diam(J)\le 2r$.
Moreover, it follows from Lemma~\ref{lem:dist}~(c) that 
$\diam(J)\ge  c_2 \diam(J')\diam(B)\ge c_3 r$, where again $c_2,c_3>0$ are independent of $B$.  The claim follows. 

\medskip
{\em Part~IV.} Let $p\in \Ju(f)$ be arbitrary and $r\in (0,1]$. To establish the porosity of $\Ju(f)$, it is enough to show 
that the Euclidean disk $B(p,r)$ contains a disk of comparable radius that lies in the complement of $\Ju(f)$. By what we have just seen, $B(p,r)$ contains a peripheral circle 
$J$ of diameter comparable to $r$. By possibly allowing a smaller constant of comparability,  we may assume that $J$ is distinct from the one peripheral circle $J_0$ that bounds the unbounded Fatou component  of $f$. Then $J\sub B(p,r)$   is the boundary of   a bounded Fatou component $U$, and so  $U\sub B(p,r)$. Since the peripheral circles of $\Ju(f)$ are uniform quasicircles, it follows that $U$ contains a Euclidean disk
$D$ of comparable size (for this standard fact see 
\cite[Propsition 4.3]{Bo}). 
Then $\diam (D) \approx \diam (J)\approx r$. Since $D\sub U\sub  B(p,r)\cap \hC\setminus \Ju(f)$ the porosity of $\Ju(f)$ follows.

 Finally, the porosity of $\Ju(f)$ implies that $\Ju(f)$ cannot have Lebesgue density points, and is hence a set of measure zero.
 \end{proof}

\section{Relative Schottky sets and Schottky maps}\label{s:Schottky}
\no
A
\emph{relative Schottky set} $S$ in a region  $D\subseteq\hC$ is a subset of $D$ whose
complement in $D$ is a union of open geometric  disks $\{B_i\}_{i\in I}$ with closures $\overline B_i,\ i\in I$, in $D$, and such that $\overline B_i\bigcap\overline B_j=\emptyset,\ i\neq j$. 
We write
\begin{equation}\label{eq:defrelSch}
S=D\setminus \bigcup_{i\in I}B_i.
\end{equation}
If $D=\hC$ or $\C$, we say that $S$ is a \emph{Schottky set}. 


Let $A,B\sub \hC$ and $\varphi\: A\ra B$ be a continuous map. We call $\varphi$ a {\em local homeomorphism} of $A$ to $B$ if for every point $p\in A$ there exist open sets $U,V\sub \C$ with   $p\in U$, $f(p)\in V$ such that $f|_{U\cap A}$ is a homeomorphism of $U\cap A$ onto $V\cap B$. Note that this concept depends of course on $A$, but also crucially  on $B$: if $B'\supseteq B$, then we may consider a local homeomorphism $f\:A\ra B$ also as a map $f\:A\to B'$, but the second map will not be a local homeomorphism in general.

Let $D$ and $\tilde D$ be two regions in $\hC$, and let $S=D\setminus \bigcup_{i\in I} B_i$ and $\tilde S=\tilde D\setminus \bigcup_{j\in J}\tilde B_j$ be relative Schottky sets in $D$ and 
$ \tilde D$, respectively.
Let $U$ be an open subset of $D$ and let $f\: S\cap U\to \tilde S$ be a local homeomorphism. 
According to \cite{Me3}, such a map $f$ is called a \emph{Schottky map}  if 
it is  conformal at every point $p\in S\cap U$, i.e., the derivative
\begin{equation}\label{eq:confmean}
f'(p)=\lim_{q\in S,\, q\to p }\frac{f(q)-f(p)}{q-p}
\end{equation}
exists and does not vanish, and the function $f'$ is  continuous  on $S\cap U$.
If $p=\infty$ or $f(p)=\infty$, the existence of this limit and the continuity of $f'$ have to be understood after a coordinate change  $z\mapsto 1/z$ near $\infty$. In all our applications
$S\sub \C$ and so we can  ignore this technicality. 

Theorem~\ref{T:Me2T1.2} implies that if $D$ and $\tilde D$ are Jordan regions, the relative Schottky set $S$ has measure zero, and $f\: S\to\tilde S$ is a locally quasisymmetric
homeomorphism that is 
 orientation-preserving (this is defined similarly as for homeomorphisms between Sierpi\'nski carpets; see the discussion after Lemma~\ref{lem:ext}), then $f$ is a Schottky map.
 
 We require  a more general criterion for maps to be Schottky maps. 



\begin{lemma}\label{L:Schmaps} Let $S\sub \C$ be a Schottky set of measure zero. 
Suppose   $U\sub \hC$ is  open and   $\varphi\: U\ra \hC$ is  a locally quasiconformal map with $\varphi^{-1}(S)=U\cap S. $
Then $\varphi\: U\cap S\ra S$ is a Schottky map. 

In particular, if $\psi\: \hC\ra \hC$ is a quasiregular map with $\psi^{-1}(S)=S$, then 
$\psi\:S\setminus \crit(\psi)\ra S$ is a Schottky map. 
\end{lemma}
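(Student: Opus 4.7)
The plan is to reduce the first assertion to Theorem~\ref{T:Me2T1.2} by a localization around an arbitrary point of $U\cap S$, and then derive the ``in particular'' case from the main statement.

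I would first observe that $\varphi|_{U\cap S}\colon U\cap S\to S$ is a local homeomorphism: by local quasiconformality, every $p\in U\cap S$ has a neighborhood $W\subset U$ on which $\varphi|_W$ is a quasiconformal homeomorphism onto its image, and $\varphi^{-1}(S)=U\cap S$ then gives $\varphi(W\cap S)=\varphi(W)\cap S$. Because the Schottky-map property---the existence of a non-vanishing continuous complex derivative at each point---is intrinsically local, it is enough to verify it in a neighborhood of each $p\in U\cap S$.

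Fix $p\in U\cap S$ and pick $W$ as above with $\overline W\subset U$; local quasisymmetry of quasiconformal maps on compacta (\cite[Theorem~3.6.2]{AIM}) then makes $\varphi|_{\overline W}$ quasisymmetric. The crucial step is to produce a Jordan region $D_0$ with $p\in D_0\subset\overline{D_0}\subset W$ such that $D_0\cap S$ is a relative Schottky set in $D_0$ and, simultaneously, $\tilde D_0\cap S$ is a relative Schottky set in $\tilde D_0:=\varphi(D_0)$; equivalently, $\partial D_0$ and $\partial\tilde D_0$ must each avoid every closed disk whose union constitutes the complement of $S$. Granted this, the restriction $\varphi|_{D_0\cap S}\colon D_0\cap S\to\tilde D_0\cap S$ is a quasisymmetric orientation-preserving homeomorphism between relative Schottky sets of measure zero in Jordan regions---orientation-preserving because quasiregular maps in this paper satisfy $\det(Df)\ge 0$---so Theorem~\ref{T:Me2T1.2} delivers the desired continuous non-vanishing complex derivative on $D_0\cap S$.

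For the ``in particular'' statement, set $U=\hC\setminus\crit(\psi)$. Since $\crit(\psi)$ is a discrete closed subset of $\hC$, the set $U$ is open; on $U$, the local factorization $\psi=g\circ\varphi$ recorded in Section~\ref{ss:qc} (with $g$ holomorphic and $\varphi$ quasiconformal) exhibits $\psi$ as a local homeomorphism that is locally quasiconformal, and the hypothesis $\psi^{-1}(S)=S$ restricts to $\psi^{-1}(S)\cap U=U\cap S$, so the main statement applies directly. The hard part is the construction of $D_0$: the point $p$ can be an accumulation point of infinitely many small disks $\overline{B_i}$ from the complement of $S$, so a round circle $\partial B(p,r)$ typically meets infinitely many of them, and one has to deform it to route around every straddling $\overline{B_i}$ (possibly incorporating entire disks if $p$ sits on $\partial B_i$) while preserving the Jordan-curve property. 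One exploits that only finitely many $\overline{B_i}$ exceed any given diameter, that $\diam(\overline{B_i})\to 0$, and that $\varphi|_W$ is a homeomorphism identifying disks of $\C\setminus S$ inside $W$ with disks of $\hC\setminus S$ inside $\varphi(W)$, so that an avoidance arrangement on the source side translates into the corresponding arrangement on the target side.
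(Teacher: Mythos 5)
Your reduction to Theorem~\ref{T:Me2T1.2} is the right strategy and matches the paper's for points $p\in U\cap S$ that do \emph{not} lie on a peripheral circle of $S$: there one can indeed find arbitrarily small Jordan regions $D_0\ni p$ with $\partial D_0\subset S$ meeting no peripheral circle (the paper gets this cleanly from Moore's theorem applied to the decomposition of $\hC$ by the closures of the complementary disks, rather than by hand-deforming a round circle), and the observation that $\varphi$ maps peripheral-circle points to peripheral-circle points transfers the avoidance to the image side. The ``in particular'' part is also handled correctly.

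The genuine gap is the case $p\in\partial B_i$. Your proposed fix --- ``incorporating entire disks if $p$ sits on $\partial B_i$'' --- fails in general: every neighborhood of $p$ meets $B_i$, so a relative Schottky set structure on $D_0\cap S$ forces $\overline{B_i}\subset D_0$; but $D_0$ must sit inside the (possibly very small) neighborhood $W$ on which $\varphi$ is injective, and $\overline{B_i}$ is a disk of fixed size that need not be contained in $W$, nor even in $U$. So no admissible $D_0$ exists, and Theorem~\ref{T:Me2T1.2} cannot be applied directly near such $p$. The missing idea is the paper's Schwarz reflection: one takes a Jordan region $D$ whose boundary consists of an arc $\alpha\subset\partial B_i$ through $p$ and an arc $\beta\subset S$ avoiding peripheral circles, reflects $S$, $D$, and $\varphi$ across $\partial B_i$ (and across the image circle $\partial B_i'$ on the target side) to form doubled Schottky sets $\tilde S=S\cup R(S)$ and $\tilde S'=S\cup R'(S')$ and a doubled map $\tilde\varphi$, uses removability of the circular arc $\alpha$ to see that $\tilde\varphi$ is still locally quasiconformal, and only then applies Theorem~\ref{T:Me2T1.2} to the doubled configuration, in which $p$ is no longer a peripheral-circle point. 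Without this (or an equivalent device), your argument establishes conformality of $\varphi$ only at points of $U\cap S$ off the peripheral circles, which is not enough for the applications in Section~\ref{s:proof}, where the maps $h_k$ must be Schottky maps on all of $S\cap V$.
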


In the statement the assumption $S\sub \C$  (instead of $S\sub \hC$) is not really essential, but helps to avoid some technicalities caused by the point $\infty$.

\begin{proof} Our assumption  $\varphi^{-1}(S)=U\cap S$ implies that 
$\varphi(U\cap S)\sub S$. So we can consider the restriction of $\varphi$ to $U\cap S$ as a map $\varphi\: U\cap S\ra S$ (for simplicity we do not use our usual notation $\varphi|_{U\cap S}$
for this and other restrictions in the proof). This map is a local homeomorphism  $\varphi\: U\cap S\ra S$.
Indeed, let $p\in U\cap S$ be arbitrary. Since $\varphi\: U\ra \hC$ is a local homeomorphism,
there exist  open sets $V,W\sub\hC$ with $p\in V\sub U$ and $f(p)\in W$ such that 
$\varphi$ is a homeomorphism of $V$ onto $W$. Clearly, $\varphi(V\cap S)\sub W\cap S$.
Conversely, if  $q\in  W\cap S$, then there exists a point $q'\in V$ with $\varphi(q')=q$; since 
$\varphi^{-1}(S)=U\cap S$, we have $q'\in S$ and so $q'\in V\cap S$. Hence 
$\varphi(V\cap S)=W\cap S$, which implies that $\varphi$ is a homeomorphism of $V\cap S$ onto $W\cap S$. 

Note that $p\in U\cap S$ lies on a peripheral circle of $S$ if and only if $\varphi(p)$ lies on a peripheral circle of $S$. Indeed, a point $p\in S$ lies on a peripheral of $S$ if and only if it is accessible by a path in the complement of $S$, and it is clear this condition is satisfied for a point $p\in S\cap U$ if and only if it is true for the image $\varphi(p)$ (see  \cite[Lemma~3.1]{Me3} for a more general related statement).



We now want to verify the other conditions for $\varphi$ to be a Schottky map based on 
Theorem~\ref{T:Me2T1.2}.  It is enough to reduce to this situation locally near each point $p\in U\cap S$.
We  consider two cases depending on whether $p$ belongs to a peripheral circle of $S$ or not. 

So suppose $p$ does not belong to any of the peripheral circles of $S$. Then there exist  arbitrarily small Jordan regions $D$ with $p\in D$ and $\partial D\sub S$ such that $\partial D$ does not meet any peripheral circle of $S$. This easily follows from   the fact that if we collapse each closure of a complementary component of $S$ in $\hC$ to a point, then the resulting quotient space is homeomorphic to $\hC$ by 
 Moore's theorem~\cite{Moo} (for more details on  this and the similar argument below, see the proof of \cite[Theorem~5.2] {Me3}). In this way we can find a small Jordan region $D$ 
with  the following properties: 

\begin{itemize}
\item[(i)] $p\in D \sub \overline D\sub U$, 

\smallskip
\item[(ii)] the boundary $\partial D$ is contained in $S$, but does not meet any peripheral circle 
of $S$,

\smallskip
\item[(iii)] $\varphi$ is a  homeomorphism of $\overline D$ onto the closure $\overline D'$ of another Jordan region 
$D'\sub \hC$. 
\end{itemize}

As in the first part of the proof,  we see that $\varphi$ is a homeomorphism 
of $D\cap S$ onto $D'\cap S$. This homeomorphism is  locally  quasisymmetric and orientation-preserving as it is the restriction  of a locally quasiconformal map. 
Since $\partial D$ does not meet peripheral circles of $S$, the same is true 
of its image $\partial D'=\varphi(\partial D)$ by what we have seen above.   It follows that the sets $D\cap S$ and $D'\cap S$ are   relative Schottky sets of measure zero contained  in the Jordan regions $D$ and $D'$, respectively. 
Note that the set $D\cap S$ is obtained by deleting from $D$ the complementary disks  of $S$ that are contained in $D$, and $D'\cap S$ is obtained  similarly.   Now Theorem~\ref{T:Me2T1.2} implies that $\varphi\: D\cap S\ra D'\cap S$ is a Schottky map which implies that 
$\varphi\: U\cap S \ra S$ is a Schottky map near $p$.

For the other case, assume that $p$ lies on a peripheral circle of $S$, say $p\in \partial B$, where $B$ is one of the disks that form the complement of $S$.  The idea is to use a Schwarz reflection procedure to arrive at a situation similar to the previous case. This is fairly straightforward, but we will provide the details for sake of completeness. 

Similarly as before 
 (here we collapse all closures of complementary components of $S$ to points except $\overline B$),  we  find a Jordan region $D$ with the  following properties: 

\begin{itemize}

\smallskip
\item[(i)] $\overline D\sub U$ and $\partial D=\alpha\cup \beta$, where $\alpha $ and $\beta$ are two non-overlapping arcs with the same endpoints such that $\alpha\sub \partial B$,  $\beta \sub S$,  $p$ is an interior point of $\alpha$, and  no interior point of $\beta$ lies on a peripheral circle of $S$,

\smallskip
\item[(ii)] $\varphi$ is a  homeomorphism of $\overline D$ onto the closure $\overline D'$ of another Jordan region 
$D'\sub \hC$. 

\end{itemize}
Let $\alpha'=\varphi(\alpha)$. Then $\alpha$ is contained in a peripheral circle $\partial B'$ of 
$S$, where $B'$ is a suitable complementary disk of $S$. Note that $\beta'=\varphi(\beta)$ is an arc contained in $S$, has its endpoints in $\partial B'$, and no interior point of  
$\beta'$ lies on a peripheral circle of $S$. 

Let $R\: \hC\ra \hC$ be the reflection in $\partial B$, and  $R'\: \hC\ra \hC$ be the reflection in $\partial B'$. Define $\tilde S=S\cup R(S)$ and
$\tilde S'=S\cup R'(S)$. 

 Then $\tilde S$ and $\tilde S'$ are Schottky sets of measure zero,  
$\partial B\sub \tilde S$, $\partial B'\sub \tilde S'$, and $\partial B$ and $\partial B'$ do not meet any of the peripheral circles of $\tilde S$ and $\tilde S'$, respectively.  

 Let $\tilde D=D\cup \inte(\alpha)\cup R(D)$ and $\tilde D'=D'\cup \inte(\alpha')\cup  R'(D')$,
 where $\inte(\alpha)$ and $ \inte(\alpha')$ denote the set of interior points of the arcs $\alpha$ and $\alpha'$, respectively. Then $\tilde D$ and $\tilde D'$ are Jordan regions such that $p\in \tilde D$, $\partial \tilde D\sub \tilde S$, $\partial \tilde D'\sub \tilde S'$, and 
$\partial \tilde D$ and $\partial \tilde D'$ do not meet any  of the peripheral circles of $\tilde S$ and $\tilde S'$, respectively.  Hence $\tilde D\cap \tilde S$ and $\tilde D'\cap \tilde S'$ are relative Schottky sets of measure zero in $\tilde D$ and $\tilde D'$, respectively.

We define a map $\tilde \varphi\: \tilde D\ra \tilde D'$ by 
$$ \tilde \varphi(z)= \left\{ \begin{array} {cl}\varphi(z)& \text{for $z\in D\cup \inte(\alpha) $,}\\&\\
 (R'\circ \varphi\circ R)(z)&\text{for $z\in R(D)\cup \inte(\alpha)$.} \end{array}
 \right.
 $$
Note that this  definition is consistent on $\inte(\alpha)$, because $\varphi(\alpha)=\alpha'=\overline D'\cap R'(\overline D')$. It is clear that $\tilde \varphi\: \tilde D\ra \tilde D'$ is a homeomorphism.
Moreover,  since the circular arc $\alpha$ (as any set of $\sigma$-finite Hausdorff $1$-measure) is   removable for quasiconformal maps  \cite[Section 35]{Va}, the map
$\tilde \varphi$ is locally quasiconformal, and hence locally quasisymmetric and orientation-pre\-serving.  It is also straightforward to see from the definitions and the relation $\varphi^{-1}(S)=U\cap S$ that 
$\tilde \varphi^{-1}(\tilde S')= \tilde D \cap \tilde S$. Similarly as in the beginning of the proof
this implies that $\tilde \varphi\: \tilde D\cap 
 \tilde S \ra  \tilde D'\cap  \tilde S'$ is a homeomorphism. Since it is also a local quasisymmetry
 and orientation-preserving,  it follows again from  Theorem~\ref{T:Me2T1.2} that $\tilde \varphi\: \tilde D\cap 
 \tilde S \ra  \tilde D'\cap  \tilde S'$ is a Schottky map. Note that $\tilde D\cap S= D\cap S$,
  that on this set the maps $\tilde \varphi$ and $\varphi$ agree, and that $\varphi(\tilde D\cap S)\sub S$.
 Thus,  $\varphi\: \tilde  D\cap S\ra S$ is a Schottky map, and so $\varphi\: U\cap S\ra S$ is a Schottky map near $p$. 
 
 It follows that  $\varphi\: U\cap S\ra S$ is a Schottky map as desired.

The second part of the statement immediately follows from the first; indeed, $\crit(\psi)$
is a finite set and so $U=\hC\setminus 
\crit(\psi)$ is an open subset $\hC$ on which $\varphi=\psi|_{U}\:U\ra \hC$ is a locally quasiconformal
map. Moreover, $\varphi^{-1}(S)=\psi^{-1}(S)\cap U=S\cap U$. By the first part of the proof,
$\varphi$ and hence also $\psi$ (restricted to $U\cap S$) is a Schottky map of $U\cap S=S\setminus \crit(\psi)$ into $S$. 
\end{proof} 

A relative Schottky set as in \eqref{eq:defrelSch}
 is called {\em locally porous  at $p\in S$} if there exists a neighborhood $U$ of $p$, and   constants $r_0>0$ and  $C\ge 1$ such that for each  $q\in S\cap U$  and  $r\in(0, r_0]$
there exists $i\in I$ with $B_i\cap B(q,r)\ne \emptyset$  and  $r/C\le \diam(B_i) \le Cr$.
 The relative Schottky set $S$  is called {\em locally porous} if it is locally porous at every point 
 $p\in S$.  Every locally porous relative Schottky set has measure zero since it cannot have Lebesgue density points.

For Schottky maps on locally porous Schottky sets very strong rigidity and uniqueness statements are valid such as 
Theorems~\ref{T:Me3T5.3} and \ref{C:Me3C4.2} stated in the introduction. We will need another result of  a similar flavor. 

\begin{theorem}[Me3, Theorem~4.1] \label{thm:dervuniq}
 Let  $S$ be a locally porous relative Schottky set in a region $D\sub \C$, let 
 $U\sub\C$  be an open set such that $S\cap U$ is  connected, and
  $u\: S\cap U\ra S$ be a Schottky map.  Suppose that there exists a point $a\in S\cap U$ with
 $u(a)=a$ and $u'(a)=1$. Then $u=\id|_{S\cap U}$. 
\end{theorem}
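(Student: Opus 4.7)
\medskip

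The plan is to reduce Theorem~\ref{thm:dervuniq} to the uniqueness result Theorem~\ref{C:Me3C4.2}. Setting $v=\id|_{S\cap U}$, the coincidence set $E=\{p\in S\cap U\: u(p)=p\}$ contains $a$, so it suffices to show that $E$ has an accumulation point in $U$; Theorem~\ref{C:Me3C4.2} will then yield $u=\id|_{S\cap U}$.

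The main step is to produce fixed points of $u$ accumulating at $a$. For this I would combine local porosity with the fact that a Schottky map sends peripheral circles to peripheral circles. By local porosity at $a$, there is a sequence of complementary disks $B_{i_n}$ of $S$ with $\diam B_{i_n}=r_n\to 0$, $\dist(B_{i_n},a)=O(r_n)$, and $\partial B_{i_n}\subset U$ for $n$ large. Since $u$ is a local homeomorphism that sends peripheral circles of $S$ to peripheral circles of $S$, we have $u(\partial B_{i_n})=\partial B_{j_n}$ for some index $j_n$. The conformality of $u$ at $a$ with $u'(a)=1$ gives $|u(z)-z|=o(|z-a|)=o(r_n)$ uniformly for $z\in\partial B_{i_n}$, which together with the pairwise disjointness of the closures of complementary disks in $D$ forces $B_{j_n}=B_{i_n}$ for all large $n$. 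Thus each $\partial B_{i_n}$ is invariant under $u$.

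On such an invariant circle the Schottky map $u$ restricts to a conformal self-homeomorphism of $\partial B_{i_n}$, which is necessarily the restriction of a M\"obius automorphism of the disk $B_{i_n}$. If this M\"obius map is parabolic or hyperbolic, it has fixed points on $\partial B_{i_n}\subset S$, yielding points of $E$ accumulating at $a$ and finishing the argument. If instead $u|_{\partial B_{i_n}}$ is elliptic, I would iterate: the iterates $u^k$ remain Schottky maps with $u^k(a)=a$ and $(u^k)'(a)=1$, and $u^k|_{\partial B_{i_n}}$ is elliptic with rotation angle equal to $k$ times that of $u|_{\partial B_{i_n}}$. If this rotation angle is a rational multiple of $2\pi$, some iterate $u^k$ fixes $\partial B_{i_n}$ pointwise, giving the desired accumulation of $E$; if it is irrational, I would derive a contradiction by combining the normal-family compactness of $\{u^k\}$ with the rigidity forced by the continuity of $u'$ and the condition $u'(a)=1$.

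The main obstacle I anticipate is the elliptic case with irrational rotation angle. The subtlety is that $u'(a)=1$ is an infinitesimal condition at a single point, while the rotation angle of $u|_{\partial B_{i_n}}$ is a global invariant of a circle that only approaches $a$ as $n\to\infty$. Bridging this gap cleanly should require combining continuity of the Schottky derivative with the geometric rigidity supplied by the uniform porosity constants along the sequence $\{B_{i_n}\}$, perhaps by comparing the actions of $u$ and of suitable iterates $u^k$ on pairs of invariant circles close to $a$ and applying Theorem~\ref{C:Me3C4.2} to a derived pair of Schottky maps that forces the rotation angles to be zero.
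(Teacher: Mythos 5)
The reduction to Theorem~\ref{C:Me3C4.2} via the coincidence set $E=\{p: u(p)=p\}$, and the initial geometric step of producing peripheral circles of $S$ that are $u$-invariant and accumulate at $a$ (via local porosity, preservation of peripheral circles, and the estimate $|u(z)-z|=o(|z-a|)$ coming from $u'(a)=1$), are sound. The gap is in what you do with an invariant circle. You assert that ``the Schottky map $u$ restricts to a conformal self-homeomorphism of $\partial B_{i_n}$, which is necessarily the restriction of a M\"obius automorphism of the disk $B_{i_n}$.'' This inference does not follow. A Schottky map has a continuous, nonvanishing complex derivative $u'(p)$ at each point of $S$, but this is a pointwise condition; the restriction of $u$ to a peripheral circle is then a circle homeomorphism with, in the appropriate sense, a continuous derivative --- a $C^1$-type circle diffeomorphism --- and there is nothing forcing such a map to be M\"obius. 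Indeed, a circle homeomorphism is the restriction of a M\"obius automorphism of the disk if and only if it extends to a conformal homeomorphism of the disk, and nothing in the definition of a Schottky map supplies such an extension into the complementary disks (compare Lemma~\ref{lem:ext}, where conformal extendability into complementary Jordan regions is taken as a \emph{hypothesis}, and the proof of Theorem~\ref{thm:main2}, where such extensions are manufactured only via the dynamics, through Lemma~\ref{L:Rot} and the lifting lemma, rather than from a general property of Schottky maps).

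Because the trichotomy ``loxodromic/parabolic/elliptic'' is therefore unavailable, the subsequent fixed-point argument on $\partial B_{i_n}$ collapses: an invariant circle on which $u$ has no fixed point is entirely possible a priori, and you are left with precisely the situation you flag as ``the main obstacle,'' now in full generality rather than only for an irrational elliptic rotation. That final case is not resolved in the proposal; the suggested remedy is speculative. Two further cautions: (i) even in the favorable ``rational rotation'' sub-case, pointwise fixing of $\partial B_{i_n}$ by $u^k$ only gives $u^k=\id$ via Theorem~\ref{C:Me3C4.2}, and the passage from $u^k=\id$ to $u=\id$ still needs an argument using $u'(a)=1$; (ii) the statement you are proving is \cite[Theorem~4.1]{Me3}, while Theorem~\ref{C:Me3C4.2} is \cite[Corollary~4.2]{Me3} --- the very next numbered item in that source --- so invoking it here carries a real risk of circularity that would have to be ruled out.
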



\section{A functional equation in the unit disk}

\no As discussed in the introduction, for the proof of Theorem~\ref{thm:main2} 
we will establish a functional equation of  form \eqref{eq:funddynrel} for the maps in question. 
For postcritically-finite maps $f$ and $g$ this leads to strong conclusions  based on the following lemma.  
Recall that $P_k(z)=z^k$ for $k\in \N$. 

\begin{lemma}\label{L:Rot} Let $\phi\: \partial \D \ra \partial \D$ be  an orientation-preserving homeo\-morphism, and suppose that there   exist numbers $k,l,n\in \N$, $k\ge 2$, such that 
\begin{equation} \label{eq:basiceq}
(P_l\circ \phi)(z)= (P_n\circ \phi\circ P_k)(z) \quad \text{for $z\in \partial \D$}.
\end{equation}
Then $l=nk$  and there exists  $a\in \C$ with $a^{n(k-1)}=1$ such that $\phi(z)=az$ for all $z\in \partial \D$.
\end{lemma}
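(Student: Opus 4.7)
The natural approach is to lift the equation to the universal cover of the circle and exploit the periodicity and monotonicity of the lift. Here is the plan in more detail.

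Since $\phi\: \partial \D\to \partial \D$ is an orientation-preserving homeomorphism, I can write $\phi(e^{i\theta})=e^{i\psi(\theta)}$ for a continuous strictly increasing function $\psi\:\R\to\R$ with $\psi(\theta+2\pi)=\psi(\theta)+2\pi$. In terms of $\psi$, the identity \eqref{eq:basiceq} becomes
\begin{equation*}
e^{i l \psi(\theta)}=e^{i n \psi(k\theta)} \qquad (\theta\in\R).
\end{equation*}
Hence $l\psi(\theta)-n\psi(k\theta)\in 2\pi \Z$ for every $\theta$; the left-hand side is continuous in $\theta$, so it equals a fixed integer multiple of $2\pi$, say $2\pi m$ with $m\in\Z$.

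Next I would replace $\theta$ by $\theta+2\pi$ in this identity. The normalization of $\psi$ forces $l\psi(\theta+2\pi)-n\psi(k(\theta+2\pi))=2\pi m+2\pi(l-nk)$, and comparison with the identity at $\theta$ immediately yields $l=nk$. So the relation reduces to
\begin{equation*}
k\psi(\theta)-\psi(k\theta)=\tfrac{2\pi m}{n} \qquad (\theta\in\R).
\end{equation*}

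Now write $\psi(\theta)=\theta+\alpha(\theta)$, where $\alpha$ is continuous and $2\pi$-periodic. Substituting, the displayed equation becomes $k\alpha(\theta)-\alpha(k\theta)=2\pi m/n$. The constant function $\alpha_0:=2\pi m/(n(k-1))$ solves this equation (using $k\ge 2$), so for $\tilde\alpha:=\alpha-\alpha_0$ we obtain the homogeneous relation
\begin{equation*}
k\tilde\alpha(\theta)=\tilde\alpha(k\theta) \qquad (\theta\in\R).
\end{equation*}
Since $\tilde\alpha$ is continuous and $2\pi$-periodic, the supremum $M=\sup_\theta|\tilde\alpha(\theta)|$ is finite, and the identity gives $kM\le M$. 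Because $k\ge 2$, this forces $M=0$, so $\tilde\alpha\equiv 0$ and $\alpha\equiv\alpha_0$.

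The hypothesis $k\ge 2$ is essential precisely at this last step, which is the only nontrivial point in the argument; the rest is bookkeeping with lifts. Setting $a=e^{i\alpha_0}$, we conclude that $\phi(z)=az$ for every $z\in\partial\D$, and the definition of $\alpha_0$ gives $a^{n(k-1)}=e^{2\pi i m}=1$, completing the proof.
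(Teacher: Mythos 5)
Your proof is correct and follows essentially the same route as the paper: lift to the universal cover, deduce $l=nk$, and reduce to a functional equation of the form $k\alpha(\theta)-\alpha(k\theta)=\mathrm{const}$ for the $2\pi$-periodic part $\alpha$ of the lift. The only difference is the last step: the paper shows $\alpha$ has arbitrarily small periods $2\pi/k^m$ and is hence constant, whereas you subtract the particular constant solution and kill the homogeneous part via the sup-norm bound $kM\le M$; both hinge on $k\ge 2$ in the same way.
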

 This lemma implies that we can uniquely extend $\phi$ to  a conformal homeomorphism
  from $\overline \D$ onto itself. It is also  important that this extension preserves the basepoint $0\in \overline \D$.

\begin{proof} By considering topological degrees, one immediately sees that 
$l=nk$. So if we introduce the map $\psi:= P_n\circ \phi$, then \eqref{eq:basiceq} can be rewritten as 
\begin{equation} \label{eq:basiceq2} 
P_k\circ \psi=\psi\circ P_k \quad \text{on $\partial \D$}.  
\end{equation}
Here the map $\psi\: \partial \D \ra \partial \D$ has degree $n$. We claim that this in combination with \eqref{eq:basiceq2} implies that for a suitable constant $b$ we have $\psi(z)=bz^n$ for $z\in \partial \D$.

 Indeed, there exists a continuous function $\alpha\: \R \ra \R$ with $\alpha(t+2\pi)=\alpha(t)$ such that
$$ \psi(e^{ i t}) = \exp( i n t+ i\alpha(t)) \quad \text{for  $t\in \R$}. $$
By  \eqref{eq:basiceq2} we have 
$$ \exp( i k n t+ i k\alpha(t)) = (\psi(e^{ i t}))^k = \psi(e^{ i k t})=
\exp( i kn t+ i\alpha(kt))$$
for $t\in \R$. This implies that there exists a constant $c\in \R$ such that 
$$ \alpha(t)=\frac 1k \alpha(tk)+c  \quad \text{for  $t\in \R$}. $$
Since $\alpha$ is $2\pi$-periodic, the right-hand side of this equation is $2\pi/k$-periodic as a function of $t$. Hence $\alpha$ is $2\pi/k$-periodic. Repeating this argument, we see that $\alpha$ is $2\pi/k^m$-periodic for all $m\in\N$, and so has arbitrarily small periods (note that $k\ge 2$). Since $\alpha$ is continuous, it follows that $\alpha$ is constant. Hence $\psi(z)=bz^n$ for $z\in \partial \D$ with a suitable constant $b\in \C$.

It follows that 
$$ \psi(z)=b z^n=\phi(z)^n  \quad \text{for $z\in \partial \D$}.$$
Therefore,  $\phi(z)=az$ for  $z\in \partial \D$ with a  constant $a\in \C$, $a\ne 0$.
Inserting this expression for $\phi$ into \eqref{eq:basiceq} and using $l=nk$, we conclude that $a^{n(k-1)}=1$ as desired. 
  \end{proof}

  \section{Proof of Theorem~\ref{thm:main2}}\label{s:proof}
 
  The proof will be given in several steps.

\medskip
{\em Step~I.} We first fix the setup. We can freely pass to iterates of the maps $f$ or $g$, because this  changes neither their Julia sets nor their postcritical sets. We can also 
conjugate the maps by M\"obius transformations. Therefore, as in 
 Section~\ref{s:confelev},  we may  assume that $$
\Ju(f), \Ju(g)\sub \tfrac 12 \D \text{ and } f^{-1}( \D), g^{-1}(\D)\sub  \D.
$$ 
Moreover, without loss of generality, we may require that $\xi$ is orien\-ta\-tion-preserving, for otherwise we can conjugate $g$ by $z\mapsto\overline z$. 
Since the peripheral circles of $\Ju(f)$ and $\Ju(g)$ are uniform quasicircles,  by Theorem~\ref{T:BoP5.1} the map $\xi$ extends (non-uniquely) to a quasiconformal, and hence quasisymmetric, map of the whole sphere. Then $\xi(\Ju(f))=\Ju(g)$ and $\xi(\Fa(f))=\Fa(g)$.  Since $\infty$ lies in Fatou components of $f$ and $g$, we may also assume that $\xi(\infty)=\infty$ (this normalization ultimately depends on the fact that for every point $p\in \D$ there exists a quasiconformal homeomorphism $\varphi$ 
 on $\hC$ with $\varphi(0)=p$ that is the identity outside $\D$). 
 Then $\xi$ is a quasisymmetry of $\C$ with respect to the Euclidean metric. In the following, all metric notions will refer to this metric. 
Finally, we define $g_\xi=\xi^{-1} \circ g\circ  \xi$.



\medskip
{\em Step~II.} We now carefully choose a location for a ``blow-down" by 
branches  of $f^{-n}$ which will be compensated by  a ``blow-up" by iterates   of $g$ (or rather $g_\xi$). 

Since repelling periodic points of $f$ are dense in $\Ju(f)$ (see
 \cite[p.~148, Theorem~6.9.2]{Be}), we can find such a point 
$p$ in $\Ju(f)$  that does not lie in $\post(f)$. Let   $\rho>0$  be a small  positive number such that 
the disk $U_0:=B(p, 3\rho)\sub \D$  is disjoint from $\post(f)$.
Since $p$ is periodic,  there exists $d\in \N$ such that $f^d(p)=p$. Let $U_1\sub \D$ be the component of $f^{-d}(U_0)$  that contains $p$. Since $U_0\cap \post(f)=\emptyset$, the set $U_1$ is a simply connected region, and $f^{d}$ is a conformal map from $U_1$ onto $U_0$ as follows from Lemma~\ref{lem:deg}.   Then there exists a unique inverse branch $f^{-d}$ with $f^{-d}(p)=p$ that is a conformal map of $U_0$ onto $U_1$. Since $p$ is a repelling fixed point 
for $f$, it is an attracting fixed point for this branch $f^{-d}$. By possibly choosing a smaller radius $\rho>0$ in the definition of $U_0=B(p, 3\rho)$ and by passing to an iterate 
of $f^d$, we may assume that $U_1\sub U_0$ and that $\diam(f^{-n_k}(U_0))\to 0$ as $k\to \infty$.
Here  $n_k=dk$ for $k\in \N$, and $f^{-n_k}$ is  the branch 
obtained by iterating the branch $f^{-d}$ $k$-times. Note that $f^{-n_k}(p)=p$ and 
$f^{-n_k}$ is a conformal map of $U_0$ onto a simply connected region 
$U_k$. Then $p\in U_k\sub U_{k-1}$ for $k\in \N$,  and
$\diam(U_k)\to 0$ as $k\to \infty$. 

The choice of these inverse branches is  {\em consistent} in the sense that  we have 
 \begin{equation}\label{eq:cons}
 f^{n_{k+1}-n_k}\circ f^{-n_{k+1}}=f^{-n_k}
 \end{equation}
 on $B(p, 3\rho)$ for all $k\in \N$. 
 Note that this consistency condition remains valid if we replace the original sequence $\{n_k\}$ by a subsequence.

Let $\tilde r_k>0$ be the smallest  number such that 
$$f^{-n_k}(B(p, 2\rho))\sub \tilde B_k:=B(p,\tilde  r_k).$$
Since $p\in f^{-n_k}(B(p, 2\rho))$ we have $\diam\big(f^{-n_k}(B(p, 2\rho))\big)\approx  \tilde  r_k$.
Here and below $\approx$   indicates  implicit positive multiplicative 
constants independent  of $k\in \N$. It follows that $\tilde r_k\to 0$ as $k\to \infty$. Moreover, since 
$f^{-n_k}$ is conformal on the larger disk $B(p,3\rho)$, Koebe's distortion theorem implies that 
$$ \diam\big(f^{-n_k}(B(p, 2\rho))\big)\approx \diam\big(f^{-n_k}(B(p, \rho))\big). $$

 Let $r_k>0$ be the smallest  number such that 
$\xi(\tilde B_k)\sub B_k:=B(\xi(p), r_k)$. Again  $r_k\to 0$ as $k\to \infty$ by   continuity 
of $\xi$.
We now want to apply the conformal elevator given by iterates of $g$ to the disks $B_k$.
For this we choose   $\epsilon_0>0$ for the map $g$ as in  Section~\ref{s:confelev}.

By applying the conformal elevator as described in Section~\ref{s:confelev}, we can find iterates
$g^{m_k}$ such that $g^{m_k}(B_k)$ is blown up to a definite, but not too large size,  and so 
$\diam(g^{m_k}(B_k))\approx 1$.

\medskip
{\em Step~III.} Now we consider the composition 
$$
h_k=g_\xi^{m_k}\circ f^{-n_k}= \xi^{-1}\circ g^{m_k}\circ \xi \circ f^{-n_k}
$$ 
defined on $B(p,2\rho)$ for $k\in \N$. We want to show that  this sequence subconverges 
locally uniformly on $B(p,2\rho)$ to a (non-constant) quasiregular  map $h\:B(p, 2\rho)\ra \C$.

 Since $f^{-n_k}$ maps $B(q,2\rho)$ conformally into 
$\tilde B_k$, $\xi$ is a quasiconformal map with $\xi(\tilde B_k)\sub B_k$, and  $g^{m_k}$ is holomorphic  on $\tilde B_k$, we conclude that the maps $h_k$ are uniformly quasiregular 
on $B(q,2\rho)$, i.e., $K$-quasiregular with $K\ge 1$ independent of $k$. The images 
$h_k(B(q,2\rho))$   are contained in a small Euclidean neighborhood of $\Ju(g)$ and 
hence in a fixed compact subset of $\C$. Standard 
convergence results for $K$-quasiregular mappings \cite[p.~182, Corollary 5.5.7]{AIM} imply that the sequence $\{h_k\}$
subconverges locally uniformly on  $B(q,2\rho)$ to a map $h\:B(q,2\rho)\ra \C$  that 
is also quasiregular, but possibly constant.  
 By passing to a subsequence if necessary, we may assume that 
$h_k\to h$ locally uniformly on $B(q,2\rho)$. 

To rule out  that $h$ is constant, it is enough  to show that for smaller 
 disk $B(p,\rho)$ there exists $\delta>0$ such that 
 $\diam\big(h_k(B(p,\rho))\big)\geq \delta$ for all $k\in \N$.
 
 We know that 
 $$\diam\big(f^{-n_k}(B(p,\rho))\big)\approx \diam\big(f^{-n_k}(B(p,2\rho))\big) \approx \diam(\tilde B_k). $$
Moreover, since $\xi$ is a quasisymmetry and $f^{-n_k}(B(p,\rho))\sub \tilde B_k$, this implies  
$$\diam\big(\xi(f^{-n_k}(B(p,\rho)))\big)\approx \diam (\xi(\tilde B_k))\approx \diam(B_k). $$
So the connected set $\xi(f^{-n_k}(B(p,\rho)))\sub B_k$ is    comparable in size to $B_k$. By 
Lemma~\ref{lem:dist}~(a)  the conformal elevator blows it up to a definite, but not too large size, i.e., 
$$
\diam\big((g^{m_k}\circ \xi\circ f^{-n_k})(B(p,\rho))\big)\approx 1.
$$
Since the sets  $(g^{m_k}\circ \xi\circ f^{-n_k})(B(p,\rho))$ all meet $\Ju(g)$, they stay in a 
compact part of $\C$, and so we still get a uniform lower bound for the diameter of these sets if we apply the homeomorphism $\xi^{-1}$; in other words,
$$\diam\big((\xi^{-1}\circ g^{m_k}\circ \xi\circ f^{-n_k})(B(p,\rho))\big)=\diam
 \big(h_k(B(p,\rho))\big)\approx 1$$
as claimed. We conclude that $h_k\to h$ locally uniformly on $B(p, 2r)$, where $h$ is non-constant and quasiregular. 

The quasiregular map $h$ has at most  countably many critical points, and so there exists a point $q\in B(p, 2\rho)\cap\Ju(f)$
and a radius $r>0$  such that  $B(q, 2r)\sub B(p, 2\rho)$ and $h$ is injective on $B(q, 2r)$ and hence quasiconformal.
Standard topological degree arguments imply  that at least on the smaller disk $B(q,r)$ 
the maps $h_k$ are also injective and hence quasiconformal  for all $k$ sufficiently large. By possibly disregarding finitely of the maps $h_k$, we may assume that $h_k$ is quasiconformal on $B(q,r)$ for all $k\in \N$.

To summarize, we have found a disk $B(q,r)$ centered at a point $q\in \Ju(f)$ such that 
the maps $h_k$ are defined and quasiconformal on $B(q,r)$ and converge uniformly on 
$B(q,r)$ to a quasiconformal map $h$. 

From the invariance properties of Julia  and Fatou sets and the mapping properties of $\xi$, it follows that 
$$ h_k(B(q, r)\cap \Ju(f))\sub \Ju(f) \text{ and } h_k(B(q, r)\cap \Fa(f))\sub \Fa(f)$$
for each $k\in \N$.
Hence 
\begin{equation}\label{eq: invarhk}
h_k^{-1}(\Ju(f))=\Ju(f)\cap B(q,r)
\end{equation}
for each map $h_k\: B(q,r)\ra \C$, $k\in \N$.  

Since $\Ju(f)$ is closed and $h_k\to h$ uniformly on $B(q, r)$, we also have $h(B(q, r)\cap \Ju(f))\sub \Ju(f)$. To get a similar inclusion relation also for the Fatou set, we argue by contradiction and assume that there exists a point $z\in B(q,r)\cap \Fa(f)$ with $h(z)\not\in \Fa(f)$. Then $h(z)\in \Ju(f)$. Since $B(q,r)\cap \Fa(f)$ is an open neighborhood of $z$, it follows again from  standard topological degree arguments that 
for  large enough $k\in \N$ there exists a point $z_k\in B(q,r)\cap \Fa(f)$ with 
$h_k(z_k)=h(z)\in \Ju(f)$. This is impossible by \eqref{eq: invarhk} and so indeed 
$h(B(q, r)\cap \Fa(f))\sub \Fa(f)$. We conclude that  
\begin{equation}\label{eq: invarh}
h^{-1}(\Ju(f))=\Ju(f)\cap B(q,r). 
\end{equation}

%
%

\medskip
{\em Step~IV.} We know by Theorem~\ref{thm:circgeom} that the peripheral circles of $\Ju(f)$ are uniformly relatively separated uniform quasicircles. 
According to Theorems~\ref{T:BoC1.2} and \ref{T:BoP5.1}, there exists a  quasisymmetric map $\beta$ on $\hC$ such that 
$S=\beta(\Ju(f))$ is a round Sierpi\'nski carpet. We may assume $S\sub \C$. 

We conjugate the map $f$ by $\beta$ to define a new map $\beta \circ f\circ \beta^{-1}$. By abuse of notation we call this new map also $f$. Note that this map and its iterates are in general not  rational anymore, but quasiregular maps on $\hC$. Similarly, we conjugate  $ g_\xi, h_k, h$ by $\beta$ to obtain new maps 
for which we use the same notation for the moment. 
If $V=\beta(B(q,r))$, then the new maps $h_k$ and $h$ are quasiconformal  
on $V$, and $h_k\to h$ uniformly on $V$.

\begin{lemma}\label{L:Eventuallyid}
There exist $N\in\N$  and an open set $W\sub V$ such that 
$S\cap  W$ is non-empty and connected and    
$h_k\equiv h$ on  $S\cap W$  for all  $k\geq N$. 
\end{lemma}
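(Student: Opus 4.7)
The plan is to apply Theorem~\ref{T:Me3T5.3} to the restrictions of $h_k$ and $h$ to $S$. First I would check that these restrictions are Schottky maps. From Step III, the (conjugated) maps $h_k$ are quasiconformal on $V$ with $h_k^{-1}(S)=S\cap V$, and \eqref{eq: invarh} (transferred via $\beta$) gives the analogous relation $h^{-1}(S)=S\cap V$. Since $S\subset \C$ is a Schottky set of measure zero---measure zero coming from Theorem~\ref{thm:circgeom} and preserved by the quasisymmetric $\beta$---Lemma~\ref{L:Schmaps} yields that $h_k|_{S\cap V}\colon S\cap V\to S$ and $h|_{S\cap V}\colon S\cap V\to S$ are Schottky maps. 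Each $h_k|_{S\cap V}$ is a homeomorphism onto its image, and $h_k\to h$ locally uniformly with $h$ a homeomorphism. In addition, $S$ is locally porous: Theorem~\ref{thm:circgeom} says the peripheral circles of $\Ju(f)$ occur on all locations and scales, and this property transfers to the complementary disks of $S=\beta(\Ju(f))$ via the quasisymmetric $\beta$.

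The key remaining ingredient will be a Schottky map $u$ fixing some point of $S\cap V$ with derivative different from $1$. I would pick a repelling periodic point $a\in B(q,r)\cap \Ju(f)$ of some period $d_0\in\N$ (available by density of repelling periodic points in the Julia set), set $a'=\beta(a)\in V\cap S$, and define $u=\beta\circ f^{d_0}\circ \beta^{-1}$ on a small open neighborhood $W_0\subset V$ of $a'$. Being the conjugate of a holomorphic map by a quasiconformal one, $u$ is locally quasiconformal on $W_0$ and satisfies $u^{-1}(S)=S\cap W_0$; Lemma~\ref{L:Schmaps} then makes $u|_{S\cap W_0}\colon S\cap W_0\to S$ a Schottky map fixing $a'$.

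The main obstacle will be to verify $u'(a')\neq 1$, since the merely quasisymmetric conjugation by $\beta$ can distort derivatives in an uncontrolled way. I would argue by contradiction using Theorem~\ref{thm:dervuniq}: if $u'(a')=1$, then choosing a small open $W_1\subset W_0$ containing $a'$ with $S\cap W_1$ connected (possible by local connectedness of the Sierpi\'nski carpet $S$), Theorem~\ref{thm:dervuniq} would force $u\equiv \mathrm{id}$ on $S\cap W_1$. Conjugating back by $\beta^{-1}$ would then give $f^{d_0}(z)=z$ for every $z\in \Ju(f)\cap \beta^{-1}(W_1)$, contradicting the expansion of $f^{d_0}$ at the repelling periodic point $a$---the only fixed point of $f^{d_0}$ in a small neighborhood of $a$---together with the fact that the perfect set $\Ju(f)$ accumulates at $a$. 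Once $u'(a')\neq 1$ is established, I would choose an open $W\subset W_0$ containing $a'$ with $S\cap W$ connected and non-empty; then all hypotheses of Theorem~\ref{T:Me3T5.3} are satisfied, and the theorem produces the required $N\in\N$ with $h_k\equiv h$ on $S\cap W$ for all $k\ge N$.
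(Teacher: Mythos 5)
Your proposal is correct and follows essentially the same route as the paper: identify $h_k$ and $h$ as Schottky maps via Lemma~\ref{L:Schmaps} using the invariance relations $h_k^{-1}(S)=h^{-1}(S)=S\cap V$, manufacture a Schottky map $u$ from a periodic point of $f$ with $u'\neq 1$ by the same contradiction argument through Theorem~\ref{thm:dervuniq}, and then invoke Theorem~\ref{T:Me3T5.3}. The only (harmless) deviations are that the paper chooses a periodic point \emph{off the peripheral circles} of $\Ju(f)$ and builds $W$ as a Jordan region with $\partial W\sub S$ via the Moore-theorem construction, whereas you take a repelling periodic point (which is automatically non-critical) and appeal to local connectedness of the carpet for the connected neighborhood, and the paper runs the Theorem~\ref{thm:dervuniq} contradiction globally ($f^n=\id$ on all of $S$) rather than locally against the isolation of the repelling fixed point.
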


\begin{proof}
Since  $\Ju(f)$ is porous and $S$ is a quasisymmetric image of $\Ju(f)$, the set 
$S$ is also  porous (and in particular locally porous as defined in Section~\ref{s:Schottky}).

The maps $h_k$ and $h$ are quasiconformal on $V=\beta(B(q,r))$, and $h_k\to h$ uniformly on $V$ as $k\to \infty$. The relations \eqref{eq: invarh} and \eqref{eq: invarhk} translate to $h^{-1}(S)=S\cap V$ and $h_k^{-1}(S)=S\cap V$ for $k\in \N$.  So Lemma~\ref{L:Schmaps} implies that the maps $h\: S\cap V\ra S$ 
and $h_k\: S \cap V\ra S$ for $k\in \N$ are Schottky maps. Each of these restrictions is actually  a homeomorphism onto its image.

 There are only finitely many peripheral circles of $\Ju(f)$ that contain periodic points of our original rational map $f$; indeed, 
if $J$ is such a peripheral circle,  then $f^n(J)=J$ for some $n\in \N$ as  follows from Lemma~\ref{L:PerC}; but then $J$ bounds a periodic Fatou component of $f$ which leaves only  
finitely many possibilities for $J$. Since the periodic points of $f$ are dense in $\Ju(f)$, 
we conclude that we can find a periodic point of $f$ in $ \Ju(f)\cap B(q,r)$ that does not lie on a peripheral circle of $\Ju(f)$.

 Translated  to  the conjugated map $f$,  this yields  existence of  a point $a\in S\cap V$ that does not lie on a peripheral circle of the Sierpi\'nski carpet $S$  such that $f^n(a)=a$ for some $n\in \N$.  The invariance property of the Julia set gives  $f^{-n}(S)=S$, and so   Lemma~\ref{L:Schmaps} implies that  $f^n\:S\setminus\crit(f^n)\to S$ is a Schottky map. Note that $a\not\in \crit(f^n)$ as follows from the fact that for our original rational map $f$, none of its  periodic critical points lies in the Julia set.

Therefore,  our Schottky map $f^n\:S\setminus\crit(f^n)\to S$ has a derivative at the point $a\in S\setminus\crit(f^n)$ in the sense of \eqref{eq:confmean}.  
 If $(f^n)'(a)=1$, then Theorem~\ref{thm:dervuniq} implies that $f^n\equiv\id|_{S\setminus\bran(f^n)}$, and hence by continuity $f^n$ is the identity on $S$. 
This is clearly impossible, and therefore  $(f^n)'(a)\neq1$.

Since $a\in S\cap V$ does not lie on a peripheral circle of $S$, as in the proof of Lemma~\ref{L:Schmaps} we can find a small Jordan region  $W$ with $a\in W\sub V$ and $W\cap \crit(f^n)=\emptyset$ such that $\partial W\sub S$.  Then $S\cap W$ is  non-empty and connected. 

We now restrict our maps to $W$. Then $h_k\: S\cap W\ra S$ is a Schottky map and a homeomorphism onto its image for each $k\in \N$. The same is true for the map $h\: S \cap W\ra S$. Moreover $h_k\to h$ as $k\to \infty$ uniformly on $W\cap S$. Finally, the map
$u=f^n$ is defined on $S\cap W$ and gives a Schottky map $u\: S\cap W\ra S$ such that for $a\in S\cap W$ we have $u(a)=a$ and $u'(a)\ne 1$. 
So   we can apply Theorem~\ref{T:Me3T5.3} to conclude that there exists $N\in\N$ such that $h_k\equiv h$ on $S\cap W$ for all $k\geq N$. 
 \end{proof}

By the previous lemma we can fix $k\geq N$ so that $h_k=h_{k+1}$ on $S\cap W$.
   If we go back 
to the definition of the maps $h_k$ and use the consistency of inverse branches (which is  also true for the maps conjugated by $\beta$), then we conclude that 
$$ h_{k+1}=g_\xi^{m_{k+1}}\circ f^{-n_{k+1}}= h_k =g_\xi^{m_{k}}\circ f^{-n_{k}}=
g_\xi^{m_{k}}\circ f^{n_{k+1}-n_k}\circ f^{-n_{k+1}}$$
on the set $S \cap W$. 
Cancellation gives 
$$ g_\xi^{m_{k+1}}=
g_\xi^{m_{k}}\circ f^{n_{k+1}-n_k}$$
on $f^{-n_{k+1}}(S\cap W)\sub S$. 

The two maps on both sides of the last equation are quasiregular maps $\psi\: \hC\ra \hC$ with $\psi^{-1}(S)=S$. It follows from Lemma~\ref{L:Schmaps}  that they are Schottky maps $S \cap U\ra S$ 
if $U\sub \hC$ is an open set that does not contain any of the finitely many critical points of the maps; 
in particular,  $g_\xi^{m_{k+1}}$ and $g_\xi^{m_k}\circ f^{n_{k+1}-n_k}$ are Schottky maps 
$S \cap U\ra S$, where 
 $U=\hC \setminus (\crit(g_\xi^{m_{k+1}})\cup\bran(g_\xi^{m_k}\circ f^{n_{k+1}-n_k}))$. Since $U$ has a finite complement in $\hC$, the non-degenerate connected set  
 $f^{-n_{k+1}}(S\cap W)\sub S$ has an accumulation point in $S \cap U$.
Theorem~\ref{C:Me3C4.2}
yields 
$$ g_\xi^{m_{k+1}}=
g_\xi^{m_{k}}\circ f^{n_{k+1}-n_k}$$
on $S\cap U$, and hence on all of $S$ by continuity.

If we conjugate back by $\beta^{-1}$, this leads to the relation
$$ g_\xi^{m_{k+1}}=
g_\xi^{m_{k}}\circ f^{n_{k+1}-n_k}$$ on $\Ju(f)$ for the original maps. We conclude that there exist
 integers $m,m',n\in \N$ such that for the original maps we have
\begin{equation} \label{eq:main1}
 g^{m'}\circ \xi = g^m\circ \xi \circ f^n 
 \end{equation} 
on $\Ju(f)$.

\medskip 
{\em Step~V.} Equation~\eqref{eq:main1} gives us a crucial relation of $\xi$ to the dynamics of $f$ and $g$ on their Julia sets. We will  bring \eqref{eq:main1}  into a convenient form
by replacing our original maps with iterates. 
Since $\Ju(f)$ is backward invariant, counting preimages of  generic points in 
$\Ju(f)$ under iterates of $f$ and of points in $\Ju(g)$ under iterates of $g$ leads to the relation 
$$
 \deg(g)^{m'-m}=\deg(f)^n,
$$
and so $m'-m>0$. If we post-compose both sides in \eqref{eq:main1} by a suitable iterate of 
$g$, and then replace $f$ by $f^n$ and $g$ by $g^{m'-m}$,   
we arrive at a relation of the form 
\begin{equation} \label{eq:main3}
 g^{l+1}\circ \xi = g^l\circ \xi \circ f. 
 \end{equation}  
 on $\Ju(f)$ for some $l\in \N$. 
 
 Note that this equation implies that we have
 \begin{equation} \label{eq:main4}
 g^{n+k}\circ \xi = g^n\circ \xi \circ f^k \ \text{ for all $k,n\in \N$ with $n\ge l$}. 
 \end{equation}  

\medskip 
{\em Step~VI.} In this final step of the proof, 
we disregard the non-canonical  
extension to $\hC$  of  our original homeomorphism $\xi\: \Ju(f)\ra \Ju(g)$ chosen in the beginning. Our goal is to apply  
\eqref{eq:main4} to produce a natural    extension of $\xi$ 
 mapping  each Fatou component of $f$ conformally onto a Fatou component of $g$.  
Note that  if  $U$ is a Fatou component of $f$, then $\partial U$ is a peripheral circle of $\Ju(f)$.  
 Since $\xi$ sends each peripheral circle of $\Ju(f)$ to a peripheral circle of $\Ju(g)$, 
 the image  $\xi(\partial U)$ bounds a unique Fatou component $V$ of $g$.   This sets up a natural bijection between the Fatou components of our maps, and our goal is to conformally ``fill in  the holes". 
 
So let $\mathcal{C}_f$ and $\mathcal{C}_g$ be the sets of Fatou components of $f$ and $g$, respectively. By Lemma~\ref{lem:FatouDyn} we can choose a corresponding  
family $\{\psi_U: U\in \mathcal{C}_f\}$  of conformal maps. Since each Fatou component of $f$ is 
a Jordan region, we can consider $\psi_U$ as a conformal homeomorphism  from $\overline U$ onto 
$\overline \D$. Similarly, we obtain a family of conformal homeomorphisms  $\tilde \psi_V: \overline V\ra \overline \D$ for $V$ in $\mathcal{C}_g$. 
These Fatou components carry  distinguished basepoints 
$p_U=\psi_U^{-1}(0)\in U$ for $U\in  \mathcal{C}_f$ and  $\tilde p_V=\tilde \psi_V^{-1}(0)\in V$ for $V\in  \mathcal{C}_g$. 

 We will now first extend $\xi$ to the  periodic Fatou components of $f$, and then 
use the Lifting Lemma~\ref{lem:lifting} to get extensions to Fatou components of higher and higher level (as defined in the proof of Lemma~\ref{lem:FatouDyn}). In this argument it will be important to ensure that these extensions are basepoint-preserving. 
 
First let $U$ be a periodic Fatou component of  $f$. We denote by $k\in\N$  the period of $U$, and define $V$ to be the Fatou component of $g$ bounded by $\xi(\partial U)$,  and 
 $W=g^l(V)$, where $l\in \N$ is as in \eqref{eq:main4}. 
Then 
 \eqref{eq:main4}  implies that $\partial W$, and hence $W$ itself,  is invariant under $g^k$.  By 
 Lemma~\ref{lem:FatouDyn}  the basepoint-preserving homeomorphisms  $\psi_U\: (\overline U, p_U)\ra 
( \overline \D, 0)$ and $\tilde\psi_W\:(\overline W,\tilde p_W)\ra 
( \overline \D,0)$    conjugate $f^k$ and $g^k$, respectively, to power maps. 
Since $f$ and $g$ are postcritically-finite, the periodic Fatou components $U$ and $W$ are superattracting, and thus the degrees of these power maps are at least 2.

Again by Lemma~\ref{lem:FatouDyn} the map 
$\tilde \psi_W\circ g^l\circ\tilde \psi^{-1}_V$ is a power map. Since $U,V,W$ are Jordan regions, the maps $\psi_U, \tilde \psi_V, \tilde\psi_W$ give  homeomorphisms between the boundaries of the corresponding Fatou components and $\partial \D$. Since $\xi$ is an
orientation-preserving  homeomorphism of $\partial U$ onto $\partial V$, the map 
$\phi=\tilde \psi_V\circ \xi\circ\psi_U^{-1}$ 
 gives an orientation-preserving  homeomorphism  on $\partial \D$. Now \eqref{eq:main4} for $n=l$ implies that on $\partial \D$ we have 
\begin{align*} \label{eq:main5}
P_{d_3}\circ \phi&=\tilde  \psi_W\circ g^{k+l}\circ \tilde \psi_V^{-1} \circ\phi
= \tilde  \psi_W\circ g^{k+l}\circ \xi\circ \psi_U^{-1}\\
&=  \tilde  \psi_W\circ g^{l}\circ \xi\circ f^k\circ \psi^{-1}_U
= \tilde  \psi_W\circ g^{l}\circ \xi \circ \psi^{-1}_U \circ P_{d_1}\\
&=  \tilde  \psi_W\circ g^{l}\circ\tilde  \psi^{-1}_V\circ \phi\circ P_{d_1}
= P_{d_2}\circ \phi \circ P_{d_1}
\end{align*}
for some $d_1, d _2, d_3\in\N$ with $d_1\geq 2$.  Lemma~\ref{L:Rot} implies that $\phi$ extends to $\overline \D$ as a rotation around $0$, also denoted by $\phi$. In particular, $\phi(0)=0$, and so $\phi$ preserves the basepoint $0$ in $\overline \D$. If we  define $\xi=\tilde\psi_V^{-1}\circ \phi\circ\psi_U$  on $\overline U$, then $\xi$ is a conformal homeomorphism  
 of $(\overline U,p_U)$ onto  $(\overline V, \tilde p_V)$.

In this way,  we can  conformally extend $\xi$ to every periodic Fatou component of $f$ so that $\xi$ maps the  basepoint of a Fatou component to the basepoint of the image component. 
To get such an extension also for the other  Fatou components $V$ of $f$, we proceed inductively 
on the level of the Fatou component. So suppose that the level of $V$ is $\ge 1$ and that 
we have already found an extension 
for all Fatou components with a level lower than $V$. This applies to the Fatou component 
$U=f(V)$ of $f$, and so a  conformal extension  $(U, p_U)\ra ( U', \tilde p_{U'})$ of $\xi|_{\partial U}$ exists, where $U'$ is the Fatou component of $g$ bounded by $\xi(\partial U)$. 
 Let $V'$ be the Fatou component of $g$ bounded by $\xi(\partial V)$, and $W=g^{l+1}(V')$. 
 Then by using \eqref{eq:main4} on $\partial V$ we conclude that $g^l(U')=W$. 
 Define $\alpha=g^l\circ \xi|_{\overline U}\circ f|_{\overline V}$ and 
 $\beta=\xi|_{\partial V}$. Then the assumptions of Lemma~\ref{lem:lifting} are satisfied for $D=V$, $p_D=p_{V}$, and the iterate 
 $g^{l+1}\: V'\ra W$ of $g$. Indeed, $\alpha$ is continuous on $\overline V$ and holomorphic 
 on $V$, we have 
 $$\alpha^{-1}(p_W)=f^{-1}(  \xi|_{\overline U}^{-1}(\tilde p_{U'}))=f^{-1}(p_U)=\{p_V\},$$
 and 
 $$g^{l+1}\circ \beta= g^{l+1}\circ  \xi|_{\partial V}= g^{l}\circ \xi|_{\partial U}\circ f|_{\partial V}
 =\alpha.$$ Since $\beta$ is a homeomorphism, it follows  that there exists a conformal 
 homeomorphism  $\tilde \alpha$  of $(\overline V, p_V)$ onto $(\overline V', \tilde p_{V'})$ such that $\tilde \alpha|_{\partial V}=\beta=\xi|_{\partial V}$. In other words, 
 $\tilde \alpha$ gives the desired basepoint preserving conformal extension to the Fatou component $V$. 
 
  This argument shows  that $\xi$ has a (unique) conformal extension to each Fatou component of $f$. We know that the peripheral circles of $\Ju(f)$ are uniform quasicircles, and that $\Ju(f)$ has  measure zero. Lemma~\ref{lem:ext} now implies that $\xi$ 
extends to a M\"obius transformation on $\hC$, which completes  the proof. 


The techniques discussed also easily lead to a proof of  Corollary~\ref{C:FinGp}.

\begin{proof}[Proof of Corollary~\ref{C:FinGp}] Let $f$ be a postcritically-finite rational map whose Julia set $\Ju(f)$ is a Sierpi\'nski carpet. Let $G$ be the group of all 
M\"obius transformations $\xi$ on $\hC$  with $\xi(\Ju(f))=\Ju(f)$, and $H$ be the subgroup of all elements in $G$ that preserve orientation.  By Theorem~\ref{thm:main1} it is enough to prove  that $G$ is finite.   Since $G=H$ or $H$ has index $2$ in $G$, this is true if 
we can show that $H$ is finite.

Note that the group $H$ is {\em discrete}, i.e., there exists $\delta_0>0$ such that 
\begin{equation}\label{eq:discrete} \sup_{p\in \hC} \sigma(\xi(p), p)\ge \delta_0
\end{equation}
for all $\xi\in H$ with $\xi\ne \id_{\hC}$. Indeed, we choose $\delta_0>0$ so small that 
there are at least three distinct complementary components $D_1,D_2,D_3$ of $\Ju(f)$ that 
contain  disks of radius $\delta_0$. In order to show \eqref{eq:discrete}, suppose that $\xi\in H$ and  $\sigma(\xi(p), p)<\delta_0$ for all $p\in \hC$. Then $\xi(D_i)\cap D_i\ne \emptyset$, and so $\xi(\overline D_i)=\overline D_i$
for $i=1, 2,3$, because $\xi$ permutes the closures of  Fatou components of $f$.
This shows that $\xi$ is a conformal homeomorphism of the closed Jordan region $\overline D_i$ onto itself. Hence $\xi$ has a fixed point in $\overline D_i$. Since  $\Ju(f)$ is a Sierpi\'nski carpet, the closures $\overline D_1, \overline D_2, \overline D_3$ are pairwise disjoint, and we conclude that $\xi$ has at least three fixed points. Since $\xi$ is an orientation-preserving 
 M\"obius transformation, this 
implies that $\xi=\id_{\hC}$, and the discreteness of $H$ follows.

 We will now  analyze type of M\"obius transformations contained
in the group $H$   (for the relevant classification of M\"obius transformations up to conjugacy, see \cite[Section~4.3]{Be0}). 
So consider an arbitrary $\xi\in H$, $\xi\ne \text{id}_{\hC}$. 

Then $\xi$ cannot be loxodromic; indeed, otherwise $\xi$ has a repelling fixed point $p$ which necessarily has to lie in $\Ju(f)$. 
We now argue as in the proof of Theorem~\ref{thm:main2} and ``blow down" by iterates $\xi^{-n_k}$ near 
$p$ and ``blow up" by the conformal elevator  using iterates $f^{m_k}$ to obtain a sequence of conformal
maps of the form $h_k=f^{m_k}\circ \xi^{-n_k}$ that converge uniformly to a (non-constant) conformal limit function $h$ on a disk $B$ centered at a point in $q\in \Ju(f)$. Again this sequence stabilizes, and so $h_{k+1}=h_k$ for large $k$ on a connected non-degenerate subset of $B$, and hence on $\hC$ by the uniqueness theorem for analytic functions. This leads to a relation of the form 
$f^k\circ \xi^l=f^m$, where $k,l,m\in \N$. Comparing degrees we get $m=k$, and so 
we have  $f^{k}=f^{k}\circ \xi^{-ln}$ for all $n\in \N$. This is impossible, because
$f^k=f^{k}\circ \xi^{-ln}\to f^{k}(p)$ near $p$ as $n\to \infty$, while $f^k$ is non-constant. 

The M\"obius transformation $\xi$ cannot be parabolic either; otherwise, after conjugation we may assume that  
$\xi(z)=z+a$ with $a\in \C$, $a\ne 0$. Then necessarily $\infty\in \Ju(f)$. On the other hand, we know that 
the peripheral circles of $\Ju(f)$ are uniform quasicircles that occur on all locations and scales with respect to the chordal metric.  Translated to the Euclidean metric near $\infty$ this means that $\Ju(f)$ has complementary components $D$ with $\infty\not\in \partial D$ that contain   Euclidean disks of arbitrarily large radius, and in particular of radius $>|a|$; but then $\xi$ cannot move $D$ off itself, and so $\xi(D)=D$ for the translation $\xi$.
 This  is impossible.

Finally, $\xi$ can be elliptic; then after conjugation we have $\xi(z)=az$ with $a\in \C$ and $|a|=1$, $a\ne 1$.  Since $H$ is discrete, 
$a$ must be  a root of unity, and so  $\xi$
 is a torsion element of $H$.

We conclude that $H$ is a discrete group of M\"obius transformations such that 
each element  $\xi\in H$ with $\xi\ne \text{id}_{\hC}$ is a  torsion element. It is well-known that  such a  group $H$ is finite (one can derive this from  \cite[p.~70, Theorem 4.3.7]{Be0} in combination with the considerations in \cite[p.~84]{Be0}). 

\end{proof}

\end{document}